\documentclass[12pt, reqno]{amsart} 

\usepackage{fancybox,color, mathtools, array, amssymb}
\usepackage{amsthm}
\mathtoolsset{showonlyrefs,showmanualtags}
\usepackage[shortlabels]{enumitem}
\usepackage[margin=1.0in]{geometry}
\usepackage{mathrsfs}
\usepackage{amsmath}
\usepackage{lmodern}
\usepackage{wasysym}
\usepackage{mdframed}
\usepackage{tcolorbox}
\usepackage{setspace}
\usepackage{subcaption}
\tcbuselibrary{theorems}
\usepackage{subfiles}
\usepackage{tikz}
\usepackage[citestyle=alphabetic, bibstyle=alphabetic,maxnames=50,maxcitenames=50,maxalphanames=50]{biblatex}
\AtBeginBibliography{\small}
\usetikzlibrary{graphs}
\usetikzlibrary{trees}
\usepackage{hyperref}

\newtheorem{theorem}{Theorem}[section]
\newtheorem{lemma}{Lemma}[section]
\newtheorem{defn}{Definition}[section]
\newtheorem{propo}{Proposition}[section]
\newtheorem{coro}{Corollary}[section]

\theoremstyle{remark}

\newtheorem{rem}{Remark}[section]
\theoremstyle{definition}

\newcommand{\R}{\mathbb{R}}

\newcommand{\Z}{\mathbb{Z}}

\newcommand{\emp}{\emptyset}

\newcommand\pderiv[2]{\frac{\partial #1}{\partial #2}}
\newcommand\innerp[2]{\langle #1, #2 \rangle}
\newcommand\wtil[1]{\widetilde{#1}}

\newcommand{\ep}{\varepsilon}
\newcommand{\sub}{\subseteq}

\newcommand{\spa}{\text{span}}
\newcommand{\p}{\partial}
\newcommand{\grad}{\nabla}

\newcommand{\lap}{\Delta}

\title{Stability of critical equipartitions of graphs}
\author{Connor Menzel}
\date{}

\begin{document}

\begin{abstract}
    We consider partitions of a finite, simple, weighted graph that minimize a spectral energy functional, defined to be the maximum of the first eigenvalues on each component. These partitions are minimized with respect to a parameter that we view as a small perturbation of a fixed combinatorial partition. It has been shown that critical points of the energy functional in this framework correspond to non-degenerate eigenvectors of the graph Laplacian if and only if the partition is bipartite. In this work we generalize this result to partitions that are not necessarily bipartite by constructing a modified graph Laplacian called the \textit{partition Laplacian}. The main result states that critical points of the spectral energy functional correspond to nodal partitions of non-degenerate eigenvectors of the partition Laplacian. Furthermore, the stability of each critical point is determined entirely by the nodal deficiency of the associated eigenvector. We also consider more general signed Laplacians and prove that Courant-sharp Laplacian eigenvectors induce globally minimal partitions.
\end{abstract}

\maketitle

\section{Introduction} 

Given an eigenfunction of the Laplacian on a bounded domain or an eigenvector of the graph Laplacian on a finite discrete graph, it is natural to investigate the properties of its zero set, often called its \textit{nodal set}. Chladni was the first to systematically study nodal patterns on vibrating plates \cite{chladniEntdeckungenUeberTheorie1787}. Courant  then generalized the earlier work of Sturm \cite{sturmMemoireEquationsDifferentielles2009} to dimensions greater than one, proving that the $n$th eigenfunction of a second order self-adjoint differential operator divides the domain into no more than $n$ connected components on which the eigenfunction has a constant sign \cite{courantMethodsMathematicalPhysics2009}. These components are called nodal domains.

Counting the nodal domains of eigenfunctions has led to many exciting results. The nodal domain count is connected to quantum chaos \cite{blumNodalDomainsStatistics2002,bogomolnyPercolationModelNodal2002} and there is a growing body of work that conjectures that knowledge of the nodal domain counts could distinguish between isospectral domains \cite{gnutzmannCanOneCount2006,gnutzmannResolvingIsospectralDrums2005,brueningRemarksResolvingIsospectral2007} or graphs \cite{bandNodalDomainsGraphs2008}. There are also deep connections between the nodal domain count and minimality of a spectral energy functional \cite{helfferNodalDomainsSpectral2006a,berkolaikoStabilitySpectralPartitions2024a,berkolaikoStabilitySpectralPartitions2022,berkolaikoCriticalPartitionsNodal2012}. This connection in the discrete case is the main focus of this work. For a more complete survey of nodal properties of eigenfunctions, we refer the reader to \cite{jainNodalPortraitsQuantum2017}.

Here we are concerned with eigenvectors of the graph Laplacian on a finite graph.
Let $G = (V,E,w)$ be a finite, connected, weighted graph. We define the adjacency matrix $A_{ij} = w_{ij}$
where $w_{ij} = 0$ for all $(i,j) \notin E$ and $A_{ii} = 0$ for all $i \in V$. The degree matrix $D$ is the diagonal matrix given by $D_{ii} = \sum_{j \sim i}w_{ij}$,
where $j \sim i$ denotes that $(i,j) \in E$. We then define the graph Laplacian $L$ as 
\begin{equation}
    L = D - A.
\end{equation}
We identify vectors in $\R^{|V|}$ with functions $u:V \to \R$, and the graph Laplacian acts on $u$ by
\begin{equation}
    (Lu)_i = \sum_{j \sim i}w_{ij}(u_i - u_j),
\end{equation}
where we use the shorthand $u_i := u(i)$.
Alternatively, the Laplacian is generated by the quadratic form
\begin{equation}
    \innerp{u}{Lu} = \sum_{(i,j) \in E}w_{ij}(u_i-u_j)^2.
\end{equation}
    
The Laplacian is symmetric and positive semidefinite, and hence has nonnegative real eigenvalues, which we label
$0 = \lambda_1 \leq \lambda_2 \leq \cdots \leq \lambda_{|V|}$,
repeated according to multiplicity.

Given an eigenvector $\psi^{(n)}$ corresponding to eigenvalue $\lambda_n$, a \textit{strong nodal domain} is a maximally connected subgraph of $G$ on which $\psi^{(n)}$ has constant (nonzero) sign. A \textit{weak nodal domain} of $\psi^{(n)}$ is a maximally connected subgraph $H$ of $G$ on which $\psi^{(n)}_i \geq 0$ (or $\psi^{(n)}_i\leq 0$) for all $i$ in the vertex set of $H$. See Figure \ref{nodal_domain_example} for an example showing the difference between these two definitions.

In this paper, we will consider only strong nodal domains. We let $\nu(\psi^{(n)})$ denote the number of strong nodal domains of an eigenvector $\psi^{(n)}$. Davies, Gladwell, Leydold, and Stadler \cite{briandaviesDiscreteNodalDomain2001} proved a discrete version of Courant's nodal domain theorem: if $\psi^{(n)}$ is an eigenvalue corresponding to an eigenvalue $\lambda_n$ with multiplicity $r$, then 
\begin{equation}\label{discrete_nodal_domains}
    \nu(\psi^{(n)}) \leq n+r - 1.
\end{equation}
In the special case that $\lambda_n$ is simple, we recover the more standard nodal domain theorem that $\nu(\psi^{(n)}) \leq n$. We will only be considering simple eigenvalues, so we define the \textit{nodal deficiency} $\delta(\psi^{(n)})= n - \nu(\psi^{(n)})$.
By \eqref{discrete_nodal_domains}, the nodal deficiency is always nonnegative.

\begin{figure}
    \centering
    \begin{tikzpicture}[x=0.75pt,y=0.75pt,yscale=-1,xscale=1]

\draw    (194.69,73.44) -- (196.06,155.19) ;
\draw    (196.06,155.19) -- (196.74,236.94) ;
\draw    (112.94,154.51) -- (196.06,155.19) ;
\draw    (196.06,155.19) -- (279.17,155.88) ;
\draw  [fill={rgb, 255:red, 155; green, 155; blue, 155 }  ,fill opacity=1 ] (183.11,155.19) .. controls (183.11,148.05) and (188.91,142.25) .. (196.06,142.25) .. controls (203.2,142.25) and (209,148.05) .. (209,155.19) .. controls (209,162.34) and (203.2,168.14) .. (196.06,168.14) .. controls (188.91,168.14) and (183.11,162.34) .. (183.11,155.19) -- cycle ;
\draw  [fill={rgb, 255:red, 245; green, 166; blue, 35 }  ,fill opacity=1 ] (181.75,73.44) .. controls (181.75,66.3) and (187.55,60.5) .. (194.69,60.5) .. controls (201.84,60.5) and (207.64,66.3) .. (207.64,73.44) .. controls (207.64,80.59) and (201.84,86.39) .. (194.69,86.39) .. controls (187.55,86.39) and (181.75,80.59) .. (181.75,73.44) -- cycle ;
\draw  [fill={rgb, 255:red, 245; green, 166; blue, 35 }  ,fill opacity=1 ] (264.86,155.19) .. controls (264.86,148.05) and (270.66,142.25) .. (277.81,142.25) .. controls (284.95,142.25) and (290.75,148.05) .. (290.75,155.19) .. controls (290.75,162.34) and (284.95,168.14) .. (277.81,168.14) .. controls (270.66,168.14) and (264.86,162.34) .. (264.86,155.19) -- cycle ;
\draw  [fill={rgb, 255:red, 74; green, 144; blue, 226 }  ,fill opacity=1 ] (100,154.51) .. controls (100,147.36) and (105.8,141.57) .. (112.94,141.57) .. controls (120.09,141.57) and (125.89,147.36) .. (125.89,154.51) .. controls (125.89,161.66) and (120.09,167.46) .. (112.94,167.46) .. controls (105.8,167.46) and (100,161.66) .. (100,154.51) -- cycle ;
\draw  [fill={rgb, 255:red, 74; green, 144; blue, 226 }  ,fill opacity=1 ] (183.79,236.94) .. controls (183.79,229.8) and (189.59,224) .. (196.74,224) .. controls (203.89,224) and (209.68,229.8) .. (209.68,236.94) .. controls (209.68,244.09) and (203.89,249.89) .. (196.74,249.89) .. controls (189.59,249.89) and (183.79,244.09) .. (183.79,236.94) -- cycle ;

\draw (187,66.9) node [anchor=north west][inner sep=0.75pt]    {$+$};
\draw (270,148.4) node [anchor=north west][inner sep=0.75pt]    {$+$};
\draw (106,148) node [anchor=north west][inner sep=0.75pt]    {$-$};
\draw (190,230) node [anchor=north west][inner sep=0.75pt]    {$-$};
\draw (190.5,148) node [anchor=north west][inner sep=0.75pt]    {$0$};
\end{tikzpicture}
\caption{The nodal diagram for an eigenvector corresponding to the second eigenvalue on the star graph with 5 vertices. This eigenvector has 4 strong nodal domains but only two weak nodal domains.}\label{nodal_domain_example}
\end{figure}
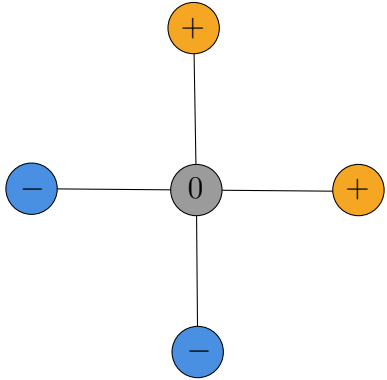

There is a deep connection between nodal domains of graph Laplacian eigenvectors and criticality of graph partitions with respect to a spectral energy functional, first established by Berkolaiko, Raz, and Smilansky \cite{berkolaikoStabilityNodalStructures2012}. To be more precise, we let a $\nu$-partition $P = \{G_k\}_{k=1}^{\nu}$ be a decomposition of $G$ into $\nu$ disjoint, connected, induced subgraphs. We consider small perturbations of $P$ depending smoothly on some parameter $\alpha$ in a sense that will be defined precisely in Section \ref{sec:param_part}. We then let $\lambda_1(G_k,\alpha)$ denote the first eigenvalue on $G_k$, which, in general, depends smoothly on $\alpha$. Define a spectral functional $\Lambda$ on parameter-dependent partitions as 
\[\Lambda(P,\alpha) = \max_{k}\lambda_1(G_k,\alpha).\]
In \cite{berkolaikoStabilityNodalStructures2012} the authors demonstrated that once restricted to a certain smooth submanifold of parameter-dependent partitions, the critical points of $\Lambda$ with respect to $\alpha$ correspond to nodal partitions of nondegenerate graph Laplacian eigenvectors if and only if the underlying partition $P$ is bipartite. Furthermore, the Morse index of any critical point is equal to the nodal deficiency of the corresponding eigenvector. Recall that the Morse index is the number of negative eigenvalues of the Hessian of $\Lambda$ at the critical point, and hence determines the stability of the critical point.

The above work relies crucially on the bipartite structure of the partition $P$, since every nodal partition of a graph Laplacian eigenvector must be bipartite; the bipartite structure is determined by the signs of the eigenvector on its nodal domains. We generalize this work to understand critical points of $\Lambda$ with respect to $\alpha$ when the underlying partition is not necessarily bipartite. Since we have no hope of relating non-bipartite partitions to nodal partitions of graph Laplacian eigenvectors, we consider eigenvectors of a signed graph Laplacian, which we call the \textit{partition Laplacian}.

With this framework, we can state a loose version of our main result, although the precise statement requires terminology that will not be discussed until Section \ref{sec:param_part}:

\begin{theorem}\label{thm:main_result}
    Given a partition $P$ of a graph $G$ into $\nu$ connected subgraphs $\{G_k\}_{k=1}^{\nu}$, consider local perturbations of $P$ parametrized by $\alpha$. Define 
    \[\Lambda(P,\alpha) = \max_{k=1,\dots,\nu}\lambda_1(G_k,\alpha).\]
    On a smooth submanifold of the parameter space, there is a one-to-one correspondence between critical points of the map $\alpha \mapsto \Lambda(P,\alpha)$ and eigenvectors of the partition Laplacian $L^{\p P}$ with nodal partition $P$. Furthermore, the Morse index of any critical point is equal to the nodal deficiency of the corresponding eigenvector.
\end{theorem}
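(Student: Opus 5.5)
The plan follows the approach of Berkolaiko, Raz and Smilansky, replacing the bipartite sign structure by a choice of edge signs. Fix the partition $P=\{G_k\}$ and let $E_{\p P}$ be the set of \emph{boundary edges}, i.e.\ edges joining distinct $G_k$. The perturbations are parametrized by $\alpha=(\alpha_{ij})_{(i,j)\in E_{\p P}}$. Cutting a boundary edge $(i,j)$ and adding the potentials $w_{ij}\alpha_{ij}$ at $i$ and $w_{ij}/\alpha_{ij}$ at $j$ gives a Schr\"odinger operator $L_k(\alpha)$ on each $G_k$; write $\lambda_1(G_k,\alpha)$ for its ground state.

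The partition Laplacian $L^{\p P}$ is then the signed Laplacian in which every edge of $E_{\p P}$ has its off-diagonal entry sign-flipped. Its quadratic form is
\[
\sum_{(i,j)\notin E_{\p P}}w_{ij}(u_i-u_j)^2+\sum_{(i,j)\in E_{\p P}}w_{ij}(u_i+u_j)^2 .
\]
The key identity is
\[
w_{ij}(u_i+u_j)^2=w_{ij}\Big(\alpha_{ij}u_i^2+\tfrac1{\alpha_{ij}}u_j^2\Big)-w_{ij}\Big(\sqrt{\alpha_{ij}}\,u_i-\tfrac{1}{\sqrt{\alpha_{ij}}}u_j\Big)^2 .
\]
It gives $\langle u,L^{\p P}u\rangle=\sum_k\langle u|_{G_k},L_k(\alpha)u|_{G_k}\rangle-Q_\alpha(u)$ with $Q_\alpha\ge 0$.

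\textbf{Critical points.} I would show that $\Lambda$ restricted to the equal-eigenvalue submanifold $\mathcal M=\{\alpha:\lambda_1(G_k,\alpha)\text{ all equal}\}$ has a critical point at $\alpha^*$ exactly when the ground states $f_k$ glue, with positive coefficients $c_k$, into a vector $\psi=\sum_k c_kf_k$ satisfying $\alpha^*_{ij}=\psi_j/\psi_i$ on every boundary edge. By the identity, $\psi$ is then an eigenvector of $L^{\p P}$, nonvanishing and of one sign on each $G_k$. Because the boundary entries of $L^{\p P}$ are $+w_{ij}$, its nodal domains (with the appropriate signed notion) are exactly the $G_k$.

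For the derivative computation I would use Hellmann--Feynman,
\[
\partial_{\alpha_{ij}}\lambda_1(G_k)=w_{ij}f_k(i)^2\ \text{ or }\ -w_{ij}f_k(j)^2/\alpha_{ij}^2 ,
\]
together with Lagrange multipliers for the constraint defining $\mathcal M$. The multipliers become the $c_k^2$, and criticality is equivalent to $\alpha_{ij}c_kf_k(i)=c_lf_l(j)$. Conversely, an eigenvector with nodal partition $P$ determines $\alpha^*$ by the ratios. Bijectivity, and the smoothness of $\mathcal M$, follow from the simplicity of $\lambda_1$ on connected $G_k$ (Perron--Frobenius) and from the implicit function theorem. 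The eigenvector is nondegenerate because of the gluing argument.

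\textbf{Morse index.} The approach is to relate the Hessian to the spectral count through a Dirichlet-to-Neumann type argument. I would compare $L^{\p P}$ with the direct sum $\bigoplus_k L_k(\alpha)$ via the identity above, a rank $|E_{\p P}|$ perturbation by $-Q_\alpha$.

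First I would show that at $\alpha^*$, the number of eigenvalues of $L^{\p P}$ below $\lambda=\lambda_n(L^{\p P})$ is determined as follows. The direct sum has exactly $\nu$ eigenvalues $\le\lambda$, namely $\lambda$ with multiplicity $\nu$, so $\lambda_1(G_k)=\lambda$ for each $k$. The index then counts how the negative semidefinite perturbation $-Q_{\alpha^*}$ moves these eigenvalues. Restricted to $\mathcal M$ and to variations orthogonal to the trivial scaling directions, this should be a signature of the form $\langle\cdot,-Q\,\cdot\rangle$ compressed to the relevant subspace. The resulting formula, in the spirit of Berkolaiko's nodal magnetic theorem, is $n-\nu=\mathrm{Morse}(\Lambda|_{\mathcal M},\alpha^*)$.

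Concretely, I would write the Hessian of $\Lambda|_{\mathcal M}$ as second-order perturbation theory in $\alpha$ and identify it with the Schur complement of $L^{\p P}-\lambda$ onto the span of the $f_k$ and the boundary variables. Sylvester's law of inertia then equates its negative index with $\#\{\text{eigenvalues of }L^{\p P}<\lambda\}-(\text{stuff})=n-\nu$. This inertia bookkeeping is the main obstacle: one must handle the $\nu-1$ dimensional constraint and the overall scaling degeneracy of the $\alpha$ parametrization, and show the Hessian is nondegenerate. I would try first to reduce to the bipartite case of \cite{berkolaikoStabilityNodalStructures2012} by a gauge (switching) argument. When the signs are consistent this is a diagonal $\pm1$ conjugation. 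In general the signed graph is unbalanced, so instead I would redo their Schur-complement proof verbatim, noting that it only uses the local sign flip on boundary edges, which is exactly what $L^{\p P}$ encodes.
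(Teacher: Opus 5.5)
Your treatment of the critical points follows the paper: Hellmann--Feynman derivatives, Lagrange multipliers forced positive by connectivity of $P^G$, and gluing $\psi=\sum_k\sqrt{c_k}f_k$ with $\alpha_{ij}=\psi_j/\psi_i$. The Morse-index half, however, has a genuine gap. The step you label ``(stuff)'' is the whole stability statement.

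Comparing $L^{\partial P}$ with $\bigoplus_k L_k(\alpha^*)$ through $-Q_{\alpha^*}\le 0$ only gives $0\le n-\nu\le\eta$: Courant's bound from $Q_{\alpha^*}\ge0$, and the upper bound from $\mathrm{rank}\,Q_{\alpha^*}\le|\partial P|$. It does not identify $n-\nu$ with the Morse index. You never produce a quadratic form on the $\eta$-dimensional tangent space of $\mathcal E_P$ whose inertia you can compute. Also, the bipartite argument the paper adapts is a tree-reduction and interlacing argument, not a Schur-complement computation you could copy verbatim.

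The missing idea is how to make your switching reduction work even though $(G,\sigma^{\partial P})$ is unbalanced. Pick $R\subseteq\partial P$ with $|R|=\eta$ so that $P^G$ minus $R$ is a spanning tree, and replace each $e\in R$ by its potentials frozen at $\xi_e=\alpha^c_e$. Because $\psi\in\ker B_e(\alpha^c_e)$, $\psi$ is still an eigenvector on the resulting $G_\xi$ with nodal partition $P$. Now $P$ is a tree partition of $G_\xi$, hence bipartite, so the remaining signature is balanced and the tree result applies. The equipartition is unique and $\psi$ is Courant-sharp, i.e.\ it has index exactly $\nu$ on $G_\xi$. Uniqueness is also what lets the $\eta$ parameters $\xi_e$ act as coordinates on $\mathcal E_P$. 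Re-inserting the edges of $R$ one at a time is a sequence of rank-one perturbations. Weyl interlacing with $\alpha^c_e>0$ (Corollary~\ref{coro:spectral_shift}) shows the index rises by one exactly when the eigenvalue is a local maximum in that parameter, and is unchanged at a local minimum. Hence $n=\nu+\sum_{e\in R}M_e$, i.e.\ $\delta(\psi)=\mathrm{Mor}(\Lambda(P,\alpha^c))$.

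Two smaller points in the first half also need repair. First, your key identity is false as written: its right-hand side equals $2w_{ij}u_iu_j$, not $w_{ij}(u_i+u_j)^2$. After cutting the edge, the potentials must be $w_{ij}(1+\alpha_{ij})$ at $i$ and $w_{ij}(1+1/\alpha_{ij})$ at $j$. Equivalently, $L_k(\alpha)$ is the $G_k$-block of $L^{\partial P}+\sum w_{ij}B_{ij}(\alpha_{ij})$, as in the paper. With your convention, $(L^{\partial P}\psi)_i$ picks up an extra $w_{ij}\psi_i$ for each boundary edge at $i$, so the glued $\psi$ is in general not an eigenvector with eigenvalue $\Lambda$.

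Second, simplicity of $\lambda_1$ plus ``the implicit function theorem'' does not by itself make $\mathcal M$ smooth. You must check that the differential of $\alpha\mapsto(\lambda_1(G_k,\alpha)-\lambda_1(G_1,\alpha))_{k\ge 2}$ is onto. The paper does this by transversality: $\ker(d\Phi_\alpha)^T$ is at most one-dimensional by connectivity of $P^G$. The relations $f_i^2u_{s(i)}=\alpha_{ij}^{-2}f_j^2u_{s(j)}$ force any kernel vector to have entries of one sign, so it cannot be orthogonal to $(1,\dots,1)$.
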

This result provides a characterization of local minimal partitions on discrete graphs. It also provides an explicit method for computing the nodal deficiency of certain signed Laplacian eigenvectors. 
Graph partitioning has applications in clustering in data science \cite{ostingMinimalDirichletEnergy2014,ostingConsistencyDirichletPartitions2017}. 
This result can also be seen as a discrete analogue to \cite[Theorem 1.5]{berkolaikoStabilitySpectralPartitions2024a}.
As further motivation, we now summarize some results on minimal partitions of bounded domains in $\R^2$.

\subsection{Spectral minimal partitions}
This work on discrete graphs is very closely related to spectral minimal partitions of bounded domains in $\R^{2}$, and the definition of the partition Laplacian in Section \ref{part_lap_subsection} was inspired by the partition Laplacian in the continuum case. See Appendix \ref{app:connection_to_cts} for a more detailed explanation of the connection between the discrete and continuum partition Laplacians. Here we summarize some of the most crucial developments in the theory of spectral minimal partitions. 

Consider a bounded domain $\Omega \sub \R^2$ and let $\lambda_1 \leq \lambda_2 \leq \cdots$ denote the eigenvalues of the Dirichlet Laplacian on $\Omega$, repeated according to multiplicity. Given an eigenfunction $\psi_k$ with corresponding eigenvalue $\lambda_k$, we define a \textit{nodal domain} of $\psi_k$ to be a connected component of the set $\{x \in \Omega:\psi_k(x) \neq 0\}$ and we denote the number of nodal domains of $\psi_k$ by $\nu(\psi_k)$. 

Courant's nodal domain theorem \cite{courantMethodsMathematicalPhysics2009,chavelEigenvaluesRiemannianGeometry1984} states that 
\begin{equation}
    \nu(\psi_k) \leq k.
\end{equation}
That is, the number of nodal domains is bounded above by the index of the corresponding eigenvalue. We can again define the \textit{nodal deficiency} $\delta(\psi_k) = k - \nu(\psi_k)$,
which is necessarily nonnegative.
If $\nu(\psi_k) = k$ (or equivalently $\delta(\psi_k) = 0$) we say that $\psi_k$ is \textit{Courant-sharp}. It has been shown for $n\geq 2$ that any domain has at most finitely many Courant-sharp eigenfunctions \cite{pleijelRemarksCourantsNodal1956}.

A related topic, first systematically studied by Helffer, Hoffmann-Hostenhof, and Terracini \cite{helfferNodalDomainsSpectral2006a}, is that of \textit{spectral minimal partitions}. Given a partition $P$ of $\Omega$ into $\nu$ disjoint subdomains $\{\Omega_j\}_{j=1}^{\nu}$, let $\lambda_1(\Omega_j)$ denote the first eigenvalue of the Dirichlet Laplacian on $\Omega_j$. We define the energy functional
\[\Lambda(P) = \max_{j=1,\dots,\nu}\lambda_1(\Omega_j)\]
and define a \textit{spectral minimal $N$-partition} of $\Omega$ to be a partition that minimizes $\Lambda$ within the set of all $N$-partitions. Spectral minimal partitions arise naturally in the study of spatial segregation in the strong competition limit of reaction-diffusion systems \cite{contiClassOptimalPartition2003,contiAsymptoticEstimatesSpatial2005} and in the study of free boundary variational problems \cite{altVariationalProblemsTwo1984}. The main problem of interest is to characterize all minimal partitions for a given domain $\Omega$. In \cite{helfferNodalDomainsSpectral2006a}, the authors prove that a minimal partition corresponds to the nodal partition of a Courant-sharp eigenfunction if and only if the partition is bipartite. 

This has since been generalized greatly. If one restricts to a suitable manifold of partitions, the map $P \mapsto \Lambda(P)$ becomes smooth \cite{berkolaikoCriticalPartitionsNodal2012}. It follows that a bipartite partition $P$ is a critical point of $\Lambda$ if and only if it is the nodal partition of some Laplacian eigenfunction \cite{berkolaikoStabilitySpectralPartitions2022,berkolaikoStabilitySpectralPartitions2024a}. Furthermore, the Morse index of $\Lambda$ at this critical point is given precisely by the nodal deficiency of the corresponding eigenfunction.

The above analysis also works in the case of non-bipartite partitions, although more care is required. Given a partition $P$, one defines a modified Laplacian $-\Delta^{\p P}$ called the \textit{partition Laplacian}, which acts on functions satisfying anticontinuous boundary conditions on the partition boundary. A partition $P$ (not necessarily bipartite) is a critical point of $\Lambda$ if and only if it is the nodal partition of some eigenfunction of $-\Delta^{\p P}$ with nodal partition $P$. In the case that $P$ is bipartite, $-\Delta^{\p P}$ is unitarily equivalent to $-\Delta$, so the partition Laplacian allows us to study bipartite and non-bipartite partitions simultaneously. See \cite{berkolaikoStabilitySpectralPartitions2022}, \cite{berkolaikoStabilitySpectralPartitions2024a} and \cite{berkolaikoHomologySpectralMinimal2024} for more details about this approach. 

This work aims to characterize spectral minimal partitions on discrete graphs, but there has also been substantial progress in understanding partitions of metric graphs  \cite{kennedyTheorySpectralPartitions2020,bandConnectionNumberNodal2012} and, in fact, the work in \cite{bandConnectionNumberNodal2012} motivated some of the results on domains discussed above. 

\subsection{Outline} In Section \ref{sec:background} we begin with some background and notation from graph theory, introducing the concepts of signed graphs and signed Laplacians, which are the most essential tools in constructing $L^{\p P}$ and proving the main theorem. Section \ref{sec:param_part} then develops the notion of parameter-dependent partitions, which gives us a way to smoothly perturb a discrete graph partition. Section \ref{sec:main_result} provides a proof of the main theorem. In Section \ref{sec:lower_bound} we make generalizations that allow for a lower bound on the partition energy, without requiring a restriction to any smooth submanifold of partitions. As an immediate corollary, we show that Courant-sharp Laplacian eigenvectors induce globally minimal partitions.

\subsection{Acknowledgments} We are grateful for the advice of his thesis advisors Y. Canzani and J. Marzuola and would also like to thank G. Berkolaiko, who helpfully suggested that the results would benefit from the introduction of signed Laplacians. The author acknowledges the support of NSF CAREER Grant DMS-2045494, NSF FRG grant DMS-2152289, NSF Applied Math Grant DMS-2307384 and NSF RTG grant DMS-2135998.

\section{Definitions and Background}\label{sec:background}

A graph $G = (V,E)$ is a set of vertices $V$ along with a set of edges $E$ connecting them. If $i,j \in V$ are connected by some edge $e$, we write $e = (i,j) \in E$, and we commonly use the notation $i \sim j$ to mean that $(i,j) \in E$. A graph is simple if there is at most one edge between any two vertices and there are no edges from a vertex to itself. If a graph $G$ is not simple, we say that $G$ is a multigraph. Unless otherwise stated, all graphs will be finite, simple and connected. A graph $G$ is bipartite if there exists a function $\chi:V \to \{-1,+1\}$ such that $\chi_i\chi_j<0$ for all $(i,j) \in E$. A graph $G$ is a tree if there are no cycles.

We will consider weighted graphs $G = (V,E,w)$, where $w:E \to (0,\infty)$ is the edge weight function. We assume $w$ to be strictly positive, but when convenient we may instead consider $w:V \times V \to [0,\infty)$ with the added constraint that $w(i,j) \neq 0$ if and only if $(i,j) \in E$. We will also use the shorthand $w_{ij}:=w(i,j)$.

\subsection{Signed graphs}
One of the main tools we will use in proving the main theorem is the theory of signed graphs, which were first introduced and studied by Harary \cite{hararyNotionBalanceSigned1953} and have since led to advancements in various subfields of graph theory, such as matroid theory \cite{zaslavskySignedGraphs1982} and the study of Cheeger constants \cite{atayCheegerConstantsStructural2020}. Signed Laplacians have been used to construct Ramanujan graphs \cite{biluLiftsDiscrepancyNearly2006,marcusInterlacingFamiliesBipartite2015} and later contributed to the proof of the Kadison-Singer conjecture \cite{marcusInterlacingFamiliesII2015}.
For a more complete discussion of signed graphs, see \cite{zaslavskyCharacterizationsSignedGraphs1981}.

The necessity of signed graphs arises naturally when studying graph partitions, and we will find that the partition Laplacian $L^{\p P}$ is simply a signed Laplacian on a graph whose signature is negative on the partition boundary. The graph partition Laplacian can also be viewed as a discretization of the continuum partition Laplacian $-\lap^{\p P}$. For a detailed explanation we refer the reader to Appendix \ref{app:connection_to_cts}.

\begin{defn}\label{signed_graph}
    A signed graph $\wtil{G} = (G,\sigma)$ consists of a graph $G = (V,E,w)$ (called the underlying graph) and a function $\sigma:E \to \{-1,+1\}$, called the signature of $\wtil{G}$. 
\end{defn}

One could also define a signed graph as a weighted graph where we allow negative edge weights, but for our uses it is nicer to always assume the weights are positive. We will usually refer to a signed graph as $(G,\sigma)$, where we assume as before that $G$ is a finite, simple, connected and weighted.

\begin{defn}
    A function $\tau:V \to \{-1,+1\}$ is called a switching function. Given a signed graph $(G,\sigma)$, we use $\tau$ to switch the signature from $\sigma$ to $\sigma^{\tau}$ given by 
    \begin{equation}
        \sigma^{\tau}_{ij} = \tau_i\sigma_{ij}\tau_j.
    \end{equation}
    Two signatures $\sigma$ and $\sigma'$ on $G$ are said to be switching equivalent if there is some switching function $\tau$ such that $\sigma^{\tau} = \sigma'$.
\end{defn}
The effect of switching $\sigma$ by $\tau$ is simply reversing the sign of $\sigma$ on any edge across which $\tau$ changes sign. See Figure \ref{fig:switching_equivalence} for an illustration of switching equivalence.

We associate to each path $p = (e_1,\dots,e_n)$ in $(G,\sigma)$ a quantity $\sigma(p)$ called the sign of $p$, given by 
\begin{equation}
    \sigma(p) = \sigma_{e_1}\cdots \sigma_{e_n}.
\end{equation}

\begin{defn}
    A signed graph $(G,\sigma)$ is balanced if the sign of every cycle is positive.
\end{defn}

Zaslavsky showed that there is a connection between balanced sign graphs and graphs with strictly positive signature \cite{zaslavskySignedGraphs1982}:

\begin{lemma}[{\cite[Corollary 3.3]{zaslavskySignedGraphs1982}}]\label{zaslavsky} 
    A signed graph $(G,\sigma)$ is balanced if and only if $\sigma$ is switching equivalent to the all-positive signature.
\end{lemma}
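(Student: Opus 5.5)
One direction is a direct check. If $\sigma^{\tau}$ is the all-positive signature for some switching function $\tau$, take any cycle $c=(e_1,\dots,e_n)$ passing through vertices $v_0,v_1,\dots,v_n=v_0$. Then
\[
\sigma(c)=\prod_{k=1}^n \sigma_{v_{k-1}v_k}=\prod_{k=1}^n \tau_{v_{k-1}}\sigma^{\tau}_{v_{k-1}v_k}\tau_{v_k}=\prod_{k=1}^n\tau_{v_{k-1}}\tau_{v_k}=1 .
\]
The middle equality holds because $\tau_i^2=1$, so $\sigma_{ij}=\tau_i\sigma^\tau_{ij}\tau_j$. The last equality holds because each vertex of the closed walk appears exactly twice in the product of $\tau$'s. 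More generally, switching never changes the sign of any cycle, and the all-positive signature is balanced. Hence $\sigma$ is balanced.

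For the converse, assume $(G,\sigma)$ is balanced and construct $\tau$ explicitly. Since $G$ is connected, fix a root vertex $r$ and a spanning tree $T$. For each vertex $v$, define $\tau_v=\sigma(p_v)$, where $p_v$ is the unique path in $T$ from $r$ to $v$; set $\tau_r=1$. The key claim is that $\tau_v$ equals $\sigma(p)$ for \emph{every} path $p$ from $r$ to $v$, not just the tree path. To see this, take two walks $p,q$ from $r$ to $v$. The concatenation of $p$ with the reverse of $q$ is a closed walk, and its sign is $\sigma(p)\sigma(q)$.

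Balance gives positivity for cycles, so I would check that closed walks also have positive sign. A closed walk decomposes, as a multiset of edges modulo pairs of repeated edges, into edge-disjoint cycles. Repeated traversals of the same edge contribute $\sigma_e^2=1$, so the sign of the closed walk is a product of cycle signs, which are all $+1$. For the construction it is enough to use fundamental cycles: each non-tree edge $(i,j)$ together with the tree paths $p_i,p_j$ forms a closed walk whose symmetric difference is a single cycle. Tree edges traversed twice cancel, so $\sigma(p_i)\sigma_{ij}\sigma(p_j)=1$.

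Now verify $\sigma^\tau_{ij}=\tau_i\sigma_{ij}\tau_j=1$ for every edge, in two cases. For a tree edge, $p_j$ is $p_i$ extended by $(i,j)$ (or the reverse), so $\tau_j=\tau_i\sigma_{ij}$ and the product is $1$. For a non-tree edge, this is exactly the fundamental-cycle identity above. Therefore $\sigma^\tau$ is all-positive.

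The main obstacle is the bookkeeping in the non-tree-edge case. The tree paths $p_i$ and $p_j$ share an initial segment up to their meeting vertex, so the closed walk is not itself a cycle. The step to state carefully is that the shared segment contributes squared signs equal to $1$, leaving precisely the sign of the fundamental cycle, which is positive by balance.
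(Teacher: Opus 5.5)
Your proof is correct and complete. In the converse, the only facts you actually use are the tree-edge identity $\tau_j=\tau_i\sigma_{ij}$ and the fundamental-cycle identity $\sigma(p_i)\sigma_{ij}\sigma(p_j)=1$. The detour through arbitrary closed walks is true but unnecessary, and you can drop it. When you write it up, say explicitly why what remains after cancelling the shared segment is a genuine cycle. Past the meeting vertex the two tree paths are vertex-disjoint because $T$ is acyclic. If one of $i,j$ is itself the meeting vertex, the tree path between them has length at least $2$, because $(i,j)\notin T$ and $G$ is simple.

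The paper gives no proof of this lemma; it simply cites Zaslavsky, and right afterward it quotes Harary's theorem (Lemma \ref{harary}). With Harary's bipartition $V=V_1\cup V_2$ in hand, the nontrivial direction takes one line. Put $\tau=+1$ on $V_1$ and $\tau=-1$ on $V_2$; then $\tau_i\tau_j=\sigma_{ij}$ on every edge, so $\sigma^{\tau}\equiv 1$. The easy direction is your telescoping computation.

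Your spanning-tree potential is the self-contained version of this argument. The level sets $\{\tau=1\}$ and $\{\tau=-1\}$ of your $\tau$ form exactly a Harary bipartition, so your argument also proves the substantive direction of Lemma \ref{harary}. The price is the fundamental-cycle bookkeeping, which the citation route hides.
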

\begin{figure}
    \begin{subfigure}[c]{0.3\textwidth}
        \centering
        \begin{tikzpicture}[x=0.75pt,y=0.75pt,yscale=-1,xscale=1]
        
        \draw    (90,107.95) -- (240.2,107.95) ;
        \draw [shift={(240.2,107.95)}, rotate = 0] [color={rgb, 255:red, 0; green, 0; blue, 0 }  ][fill={rgb, 255:red, 0; green, 0; blue, 0 }  ][line width=0.75]      (0, 0) circle [x radius= 3.35, y radius= 3.35]   ;
        \draw [shift={(90,107.95)}, rotate = 0] [color={rgb, 255:red, 0; green, 0; blue, 0 }  ][fill={rgb, 255:red, 0; green, 0; blue, 0 }  ][line width=0.75]      (0, 0) circle [x radius= 3.35, y radius= 3.35]   ;
        \draw    (164.17,188.7) -- (240.2,107.95) ;
        \draw    (90,107.95) -- (164.17,188.7) ;
        \draw [shift={(164.17,188.7)}, rotate = 47.44] [color={rgb, 255:red, 0; green, 0; blue, 0 }  ][fill={rgb, 255:red, 0; green, 0; blue, 0 }  ][line width=0.75]      (0, 0) circle [x radius= 3.35, y radius= 3.35]   ;
        
        \draw (155.65,87.34) node [anchor=north west][inner sep=0.75pt]    {$+$};
        \draw (107.22,143.87) node [anchor=north west][inner sep=0.75pt]    {$+$};
        \draw (208.78,142.99) node [anchor=north west][inner sep=0.75pt]    {$+$};

        \end{tikzpicture}

        \subcaption{}
    \end{subfigure}
    \begin{subfigure}[c]{0.3\textwidth}
        \centering
        \begin{tikzpicture}[x=0.75pt,y=0.75pt,yscale=-1,xscale=1]
        
        \draw    (90,107.95) -- (240.2,107.95) ;
        \draw [shift={(240.2,107.95)}, rotate = 0] [color={rgb, 255:red, 0; green, 0; blue, 0 }  ][fill={rgb, 255:red, 0; green, 0; blue, 0 }  ][line width=0.75]      (0, 0) circle [x radius= 3.35, y radius= 3.35]   ;
        \draw [shift={(90,107.95)}, rotate = 0] [color={rgb, 255:red, 0; green, 0; blue, 0 }  ][fill={rgb, 255:red, 0; green, 0; blue, 0 }  ][line width=0.75]      (0, 0) circle [x radius= 3.35, y radius= 3.35]   ;
        \draw    (164.17,188.7) -- (240.2,107.95) ;
        \draw    (90,107.95) -- (164.17,188.7) ;
        \draw [shift={(164.17,188.7)}, rotate = 47.44] [color={rgb, 255:red, 0; green, 0; blue, 0 }  ][fill={rgb, 255:red, 0; green, 0; blue, 0 }  ][line width=0.75]      (0, 0) circle [x radius= 3.35, y radius= 3.35]   ;
        
        \draw (155.65,87.34) node [anchor=north west][inner sep=0.75pt]    {$+$};
        \draw (107.22,143.87) node [anchor=north west][inner sep=0.75pt]    {$-$};
        \draw (208.78,142.99) node [anchor=north west][inner sep=0.75pt]    {$-$};

        \end{tikzpicture}
        \caption{}
    \end{subfigure}
    \begin{subfigure}[c]{0.3\textwidth}
        \centering
        \begin{tikzpicture}[x=0.75pt,y=0.75pt,yscale=-1,xscale=1]
        
        \draw    (90,107.95) -- (240.2,107.95) ;
        \draw [shift={(240.2,107.95)}, rotate = 0] [color={rgb, 255:red, 0; green, 0; blue, 0 }  ][fill={rgb, 255:red, 0; green, 0; blue, 0 }  ][line width=0.75]      (0, 0) circle [x radius= 3.35, y radius= 3.35]   ;
        \draw [shift={(90,107.95)}, rotate = 0] [color={rgb, 255:red, 0; green, 0; blue, 0 }  ][fill={rgb, 255:red, 0; green, 0; blue, 0 }  ][line width=0.75]      (0, 0) circle [x radius= 3.35, y radius= 3.35]   ;
        \draw    (164.17,188.7) -- (240.2,107.95) ;
        \draw    (90,107.95) -- (164.17,188.7) ;
        \draw [shift={(164.17,188.7)}, rotate = 47.44] [color={rgb, 255:red, 0; green, 0; blue, 0 }  ][fill={rgb, 255:red, 0; green, 0; blue, 0 }  ][line width=0.75]      (0, 0) circle [x radius= 3.35, y radius= 3.35]   ;
        
        \draw (155.65,87.34) node [anchor=north west][inner sep=0.75pt]    {$+$};
        \draw (107.22,143.87) node [anchor=north west][inner sep=0.75pt]    {$+$};
        \draw (208.78,142.99) node [anchor=north west][inner sep=0.75pt]    {$-$};

        \end{tikzpicture}
        \caption{}
    \end{subfigure}
    \caption{The 3-cycle graph $C_3$ with three distinct signatures. (A) and (B) are switching equivalent, but (C) is not switching equivalent to either because the sign of the cycle in (C) is negative.}
    \label{fig:switching_equivalence}
\end{figure}
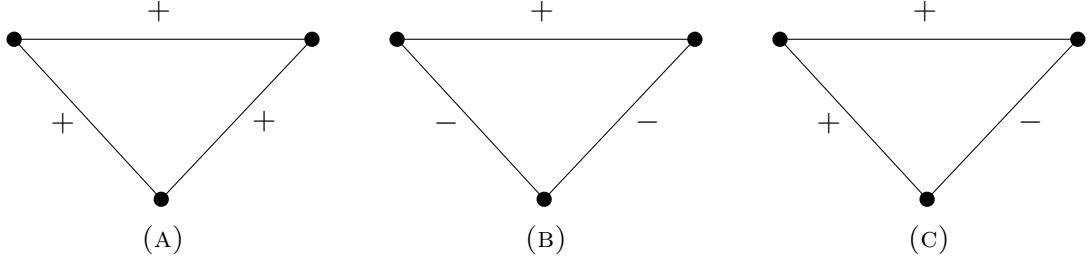
Harary \cite{hararyNotionBalanceSigned1953} also proved a characterization of balanced signed graphs that states that balanced graphs can be thought of as having some bipartite structure.

\begin{lemma}[{\cite[Theorem 3]{hararyNotionBalanceSigned1953}}]\label{harary}
    A signed graph $(G,\sigma)$ is balanced if and only if there exists a partition of $V$ into two disjoint subsets $V_1$ and $V_2$ such that any edge connecting $V_1$ and $V_2$ has negative signature and any other edge has positive signature.
\end{lemma}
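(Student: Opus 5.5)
The plan is to prove Lemma \ref{harary} by passing through Lemma \ref{zaslavsky}, and to use switching functions to translate between the two formulations. Throughout we use that $G$ is connected.

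\emph{($\Leftarrow$)} Suppose $V = V_1 \sqcup V_2$ with $\sigma_{ij} = -1$ exactly on edges joining $V_1$ and $V_2$. Define the switching function $\tau_i = +1$ for $i \in V_1$ and $\tau_i = -1$ for $i \in V_2$. Then $\tau_i\tau_j = -1$ precisely when the edge $(i,j)$ crosses between $V_1$ and $V_2$, so $\sigma^{\tau}_{ij} = \tau_i\sigma_{ij}\tau_j = +1$ on every edge. Hence $\sigma$ is switching equivalent to the all-positive signature, and $(G,\sigma)$ is balanced by Lemma \ref{zaslavsky}.

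One can also check this directly. Along any cycle, the number of crossing edges is even, because the cycle returns to its starting set. So the sign of the cycle is $(-1)^{\text{even}} = +1$.

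\emph{($\Rightarrow$)} Suppose $(G,\sigma)$ is balanced. Lemma \ref{zaslavsky} gives $\tau$ with $\tau_i\sigma_{ij}\tau_j = +1$ for all edges, that is, $\sigma_{ij} = \tau_i\tau_j$. Set $V_1 = \tau^{-1}(+1)$ and $V_2 = \tau^{-1}(-1)$. An edge has negative signature if and only if $\tau_i \neq \tau_j$, which happens if and only if it joins $V_1$ and $V_2$. All other edges are positive. This is exactly the required partition.

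To avoid relying on Lemma \ref{zaslavsky}, I would construct $\tau$ directly. Fix a root $r$ and a spanning tree $T$. Define $\tau_i$ as the sign of the unique path in $T$ from $r$ to $i$. For tree edges, $\sigma_{ij} = \tau_i\tau_j$ holds by construction. For a non-tree edge $(i,j)$, the fundamental cycle consists of the tree path $r \to i$, the edge $(i,j)$, and the tree path $j \to r$; the common portion of the two tree paths is traversed twice and cancels. This cycle has sign $\tau_i\sigma_{ij}\tau_j$, and balance forces it to equal $+1$.

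The only step needing real care is this well-definedness: that the signs on non-tree edges are consistent with $\tau$. The fundamental-cycle argument handles it. Note that one of $V_1$ or $V_2$ may be empty, which the statement should allow, for example when $\sigma$ is all-positive.
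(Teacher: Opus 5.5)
The paper gives no proof of this lemma: it is quoted from Harary's Theorem 3 and used as a black box, together with Lemma \ref{zaslavsky}, for example in Section \ref{part_lap_subsection} and in Lemma \ref{switching_equiv_crit}. Your argument is correct.

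Notice, however, that the route through Lemma \ref{zaslavsky} is essentially a relabeling. A switching function $\tau$ with $\sigma_{ij}=\tau_i\tau_j$ on every edge, and a bipartition $V=\tau^{-1}(+1)\sqcup\tau^{-1}(-1)$ in which exactly the crossing edges are negative, are the same object. So citing Lemma \ref{zaslavsky} for ($\Rightarrow$) is legitimate within the paper's framework, but as a standalone proof it only moves the burden onto a statement that says the same thing in different words.

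The substantive content is your spanning-tree construction. You set $\tau_i$ equal to the sign of the tree path from $r$ to $i$. You then check non-tree edges via their fundamental cycles, where the shared segment of the two tree paths contributes a square and hence $+1$. This is a self-contained proof that balance forces $\sigma_{ij}=\tau_i\tau_j$. Your parity count of crossing edges along a cycle settles ($\Leftarrow$) without any citation. This version in fact proves Lemmas \ref{zaslavsky} and \ref{harary} simultaneously, which is a small gain over the paper's treatment of them as two independent imported results.

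Two minor points. The tree construction uses connectedness, which the paper assumes throughout; otherwise you would root a spanning tree in each component. Your remark that one of $V_1,V_2$ may be empty is correct and consistent with how the lemma is applied, for instance to the all-positive signature.
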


Any signed graph $(G,\sigma)$ has an associated signed graph Laplacian matrix. If we define the signed adjacency matrix 
$(A^{\sigma})_{ij} = \sigma_{ij}w_{ij}$
and the diagonal degree matrix 
$D_{ii} = \sum_{j \sim i}w_{ij}$,
we define the signed graph Laplacian on $\wtil{G}$ by 
\begin{equation}
    L^{\sigma} = D - A^{\sigma}.
\end{equation}
We can also consider $L^{\sigma}$ as being generated by the quadratic form 
\begin{equation}\label{quad_form}
    \innerp{u}{L^{\sigma}u} = \sum_{(i,j) \in E}w_{ij}(u_i - \sigma_{ij}u_j)^2.
\end{equation}
It is immediate that $L^{\sigma}$ is symmetric and positive semidefinite. In the special case that $\sigma \equiv 1$, we obtain the graph Laplacian on $G$, denoted by $L$.
We label the eigenvalues of $L^{\sigma}$ as 
$\lambda_1(\sigma) \leq \cdots \leq \lambda_{|V|}(\sigma),$
repeated according to multiplicity, with corresponding eigenvectors $\{\psi^{(k)}\}_{k=1}^{|V|}$.

Given a switching function $\tau$, let $(D^{\tau})_{ii} = \tau_i$. Then a straightforward calculation shows that 
$L^{\sigma^{\tau}} = D^{\tau}L^{\sigma}D^{\tau}$,
so if $\sigma$ is switching equivalent to $\sigma'$ we find that $L^{\sigma'}$ and $L^{\sigma}$ are unitarily equivalent. In particular they have the same spectrum, and given an eigenvector $\psi$ of $L^{\sigma}$ it follows that $\tau \psi$ is an eigenvector of $L^{\sigma'}$ with the same eigenvalue. 

\begin{rem}
    The signed Laplacian is a special case of the more general magnetic Laplacian first introduced in \cite{liebFluxesLaplaciansKasteleyns1992}, where switching equivalence is replaced by gauge equivalence. In other words, any two switching equivalent signed Laplacians are in fact gauge equivalent. Magnetic Laplacians and, more generally, discrete magnetic Schr{\"o}dinger operators, are also very well studied; see \cite{alonSmoothCriticalPoints2025} for a detailed treatment. Berkolaiko and Colin de Verdière have used discrete magnetic Schr{\"o}dinger operators to prove bounds on the number of nodal domains \cite{berkolaikoLowerBoundNodal2008,verdiereMagneticInterpretationNodal2013}.
\end{rem}

\subsection{Nodal domains on signed graphs}

Let $(G,\sigma)$ be a signed graph and suppose that $u:V \to \R$ satisfies $u_i \neq 0$ for all $i \in V$. 

\begin{defn}\label{def:nodal_set}
    The nodal set $\mathcal{N} \sub E$ of $u$ is defined as 
    \[\mathcal{N} = \{(i,j) \in E:u_i\sigma_{ij}u_j < 0\}.\]
    Define $G' = (V,E\setminus \mathcal{N})$. Each connected component of $G'$ is called a nodal domain of $u$, and the number of nodal domains of $u$ is denoted by $\nu(u)$.
\end{defn}
In the case that $\sigma \equiv 1$, a nodal domain of $u$ is a connected induced subgraph on which $u$ has constant sign. This agrees with the usual notion of nodal domains on unsigned graphs. Note that the assumption that $u_i \neq 0$ allows us to only consider strong nodal domains as defined in the introduction. From now on, we will refer to strong nodal domains simply as nodal domains.

We are only concerned with the nodal domains of eigenvectors of $L^{\sigma}$ on $(G,\sigma)$. In order to make sense of Definition \ref{def:nodal_set} in this context, we restrict ourselves to only consider non-degenerate eigenvectors of $L^{\sigma}$. 
\begin{defn}
    An eigenvector $\psi$ of $L^{\sigma}$ is nondegenerate if $\psi_i \neq 0$ for all $i \in V$ and $\psi$ corresponds to a simple eigenvalue.
\end{defn}
All eigenvectors considered for the rest of this paper are assumed to be non-degenerate. This property is generic with respect to perturbation of the edge weights.

It is helpful to note that the nodal domains are invariant under switching transformations.

\begin{lemma}[{\cite[Theorem 3.10]{geNodalDomainTheorems2023}}]
    Let $\psi$ be a non-degenerate eigenvector of $L^{\sigma}$ on $(G,\sigma)$ and suppose that $\tau$ is a switching function. Then an induced subgraph of $G$ is a nodal domain of $\psi$ on $(G,\sigma)$ if and only if it is a nodal domain of $\tau \psi$ on $(G,\sigma^{\tau})$.
\end{lemma}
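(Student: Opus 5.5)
The plan is to show that the nodal set itself is unchanged by switching; nodal domains are defined purely in terms of which edges are removed, so the lemma will then follow at once. Fix a switching function $\tau$ and write $\psi' = \tau\psi$, meaning $\psi'_i = \tau_i\psi_i$. First I will record that $\psi'$ is a non-degenerate eigenvector of $L^{\sigma^{\tau}}$. This comes from the identity $L^{\sigma^{\tau}} = D^{\tau}L^{\sigma}D^{\tau}$ established above: $D^{\tau}$ is an orthogonal involution, so $L^{\sigma^\tau}$ has the same spectrum with the same multiplicities, and $\psi' = D^{\tau}\psi$ is an eigenvector for the same simple eigenvalue. Moreover $\psi'_i = \pm\psi_i \neq 0$ for every $i \in V$. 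Hence Definition \ref{def:nodal_set} applies to $\psi'$ on $(G,\sigma^\tau)$.

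The key step is a one-line computation for each edge $(i,j)\in E$:
\[
\psi'_i\,\sigma^{\tau}_{ij}\,\psi'_j = (\tau_i\psi_i)(\tau_i\sigma_{ij}\tau_j)(\tau_j\psi_j) = \tau_i^2\tau_j^2\,\psi_i\sigma_{ij}\psi_j = \psi_i\sigma_{ij}\psi_j,
\]
using $\tau_i^2=\tau_j^2=1$. Therefore $(i,j)$ lies in the nodal set $\mathcal{N}$ of $\psi$ on $(G,\sigma)$ if and only if it lies in the nodal set $\mathcal{N}'$ of $\psi'$ on $(G,\sigma^\tau)$, so $\mathcal{N}=\mathcal{N}'$.

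Consequently the graphs $G'=(V,E\setminus\mathcal{N})$ and $(V,E\setminus\mathcal{N}')$ coincide. They have the same connected components, so the nodal domains agree as subgraphs of $G$, and in particular $\nu(\psi)=\nu(\tau\psi)$.

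One small point needs care. A nodal domain is a connected component of $G'$, while the statement speaks of induced subgraphs of $G$. I will identify each component with its vertex set $U$, or with the subgraph of $G$ induced on $U$; the identification is the same on both sides, so the equivalence holds under either reading. There is no real obstacle here. The only step that needs attention is the eigenvector and non-degeneracy check, which the unitary equivalence already handles.
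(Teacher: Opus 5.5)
Your proof is correct and complete. The identity $\psi'_i\sigma^{\tau}_{ij}\psi'_j=\psi_i\sigma_{ij}\psi_j$ shows the two nodal sets are literally equal, so the graphs $(V,E\setminus\mathcal{N})$ coincide. Your use of $L^{\sigma^{\tau}}=D^{\tau}L^{\sigma}D^{\tau}$ to see that $\tau\psi$ is again a non-degenerate eigenvector is exactly what makes Definition~\ref{def:nodal_set} apply on $(G,\sigma^{\tau})$.

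The paper gives no proof of its own here; it only cites Ge--Liu, and your argument is the standard one. Your caution about the word ``induced'' is also well placed. On a signed graph a nodal domain need not be an induced subgraph: the ground state of a triangle whose single negative edge has small weight is positive, and its unique nodal domain omits that edge. Since the two nodal sets are equal, this causes no difficulty for the lemma.
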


Mohammadian also proved a version of Courant's nodal domain theorem for eigenvectors of symmetric matrices \cite{mohammadianGraphsTheirReal2016}. We only use the result as applied to signed Laplacians. The nodal domain theorem is also proven by Ge and Liu \cite{geNodalDomainTheorems2023}, using the framework of signed graphs as in this paper.

\begin{theorem}[{\cite[Corollary 3]{mohammadianGraphsTheirReal2016}}]\label{mohammadian}
    If $\psi$ is a non-degenerate eigenvector of $L^{\sigma}$ with corresponding eigenvalue $\lambda_n(\sigma)$, then 
    \[\nu(\psi) \leq n.\]
\end{theorem}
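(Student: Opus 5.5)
The plan is to adapt the classical Courant argument to the signed setting, working directly with the quadratic form \eqref{quad_form}. Let $\psi$ be the non-degenerate eigenvector with simple eigenvalue $\lambda_n(\sigma)$, and let $V_1,\dots,V_\nu$ be the vertex sets of its nodal domains from Definition \ref{def:nodal_set}. Suppose for contradiction that $\nu > n$. For each $k$, define $\phi^{(k)} = \psi \mathbf{1}_{V_k}$, the restriction of $\psi$ to $V_k$ extended by zero. These vectors have disjoint supports, so they are linearly independent and orthogonal. The goal is to show that on their span $W$ the Rayleigh quotient of $L^\sigma$ is at most $\lambda_n(\sigma)$.

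The key computation is $\innerp{\phi}{L^\sigma \phi} \le \lambda_n(\sigma)\|\phi\|^2$ for every $\phi = \sum_k c_k \phi^{(k)}$. Using the eigen-equation $\sum_{j\sim i} w_{ij}(\psi_i - \sigma_{ij}\psi_j) = \lambda_n\psi_i$, one can write
\[
\innerp{\phi}{L^\sigma\phi} - \lambda_n\|\phi\|^2 = \sum_{(i,j)\in E,\, i\in V_k,\, j\in V_l,\, k\neq l} w_{ij}\Bigl[(c_k\psi_i - \sigma_{ij}c_l\psi_j)^2 - (c_k^2\psi_i^2 + c_l^2\psi_j^2) + \sigma_{ij}\psi_i\psi_j(c_k^2+c_l^2)\Bigr].
\]
This identity is obtained by adding and subtracting $\sum_i c_{k(i)}^2\psi_i(L^\sigma\psi)_i$. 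Edges inside a nodal domain cancel exactly, since $c_k = c_l$ there. For a cross edge, the bracket simplifies to $\sigma_{ij}\psi_i\psi_j(c_k - c_l)^2$. This is $\le 0$ because cross edges lie in $\mathcal{N}$, i.e. $\psi_i\sigma_{ij}\psi_j<0$. This bookkeeping is the step where I expect to have to be careful, since the signs must match up with the signature.

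Given this bound, pick a nonzero $\phi \in W$ orthogonal to the first $n-1$ eigenvectors of $L^\sigma$; this is possible because $\dim W = \nu \ge n+1 > n-1$. By min-max, $\innerp{\phi}{L^\sigma\phi} \ge \lambda_n\|\phi\|^2$, so equality holds and $\phi$ lies in the $\lambda_n$-eigenspace. That eigenspace is spanned by $\psi$, by simplicity. In fact, with $\dim W \ge n+1$ we may choose $\phi$ also orthogonal to $\psi$, giving $\phi=0$, a contradiction. Alternatively, equality forces $c_k=c_l$ across every cross edge. Since $G$ is connected, all $c_k$ are then equal, so $\phi$ is a multiple of $\psi$; orthogonality to $\psi$ again forces $\phi=0$. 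Either route yields $\nu\le n$.

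To make the min-max step rigorous, use the Courant–Fischer characterization $\lambda_{n+1} = \min_{\dim U = n+1}\max_{U}$ Rayleigh quotient. Applied to $W' \subseteq W$ of dimension $n+1$, this gives $\lambda_{n+1}\le\lambda_n$. Simplicity forces $\lambda_{n+1}>\lambda_n$, which is already a contradiction.
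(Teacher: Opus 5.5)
The paper gives no proof of this statement. It imports it from Mohammadian, whose Courant-type bound is stated for eigenvectors of general symmetric matrices, and uses only the specialization to $L^{\sigma}$. Your argument is a correct, self-contained proof of that specialization: the classical Courant argument from the unsigned discrete nodal domain theorem, carried over to the signed form \eqref{quad_form}.

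Your identity checks out: $\innerp{\phi}{L^{\sigma}\phi}-\lambda_n\|\phi\|^2$ equals the sum of $w_{ij}\sigma_{ij}\psi_i\psi_j(c_k-c_l)^2$ over edges joining distinct nodal domains. It does slightly more work here than in the unsigned case. A nodal domain in the sense of Definition \ref{def:nodal_set} can contain an edge of $\mathcal{N}$ between two of its own vertices; this happens exactly when the subgraph of $G$ induced on its vertex set contains an unbalanced cycle. Your computation correctly gives such edges zero contribution, because $k=l$ there. Every edge between distinct domains lies in $\mathcal{N}$, since otherwise it would join two components of $G'$.

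Either of your endings suffices on its own. The Courant--Fischer ending is the shortest. The ending that uses strict negativity of the cross-edge terms plus connectivity of $G$ is stronger, because it never invokes simplicity of $\lambda_n$. It therefore gives $\nu(\psi)\le n$ for every nowhere-vanishing eigenvector, with $n$ the smallest index of its eigenvalue.

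The citation buys the paper generality and brevity. Your route buys an elementary proof tailored to $L^{\sigma}$, with an exact formula for the Rayleigh-quotient defect that makes the equality case transparent.
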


Given a non-degenerate eigenvector $\psi$ of $L^{\sigma}$ with eigenvalue $\lambda_n(\sigma)$ we define the nodal deficiency $\delta(\psi)$ as 
$\delta(\psi) = n - \nu(\psi).$
Theorem \ref{mohammadian} immediately implies that $\delta(\psi) \geq 0$.

\subsection{Graph partitions and the partition Laplacian}\label{part_lap_subsection}

Given a graph $G$, we are concerned with partitions of $G$, which we take to be decompositions of $G$ into disjoint connected components. A \textit{subgraph} $G'$ of $G$ is itself a graph with vertex set $V' \sub V$ and edge set $E' \sub E$. A \textit{spanning subgraph} of $G$ is a subgraph of $G$ with the same vertex set.

\begin{defn}
    A $\nu$-partition $P$ of $G = (V,E,w)$ is a spanning subgraph of $G$ with $\nu$ connected components $\{G_k = (V_k,E_k,w)\}_{k=1}^{\nu}$ such that for any $k$, if $i,j \in V_k$ and $(i,j) \in E$, then $(i,j) \in E_k$.
\end{defn}

In other words, the only edges we remove from $G$ to form $P$ are the edges between the desired connected components of $P$. Given a $\nu$-partition $P$ of $G$, we let $\p P$ denote the partition boundary, the set of all edges removed from $G$ to form $P$.

To each partition $P$ we associate a multigraph $P^G$, which has $\nu$ vertices corresponding to the connected components of $P$, and $(i,j) \in E(P^G)$ if and only if there are distinct connected components $G_k$ and $G_{\ell}$ of $P$ such that $i \in G_k$, $j \in G_{\ell}$, and $(i,j) \in E$. Note that the edges of $P^G$ are in one-to-one correspondence with the partition boundary $\p P$.
We say that a partition $P$ is bipartite if the multigraph $P^G$ is bipartite, and we say that $P$ is a tree partition if the associated $P^G$ is a tree. See Figure \ref{fig:partition_graph} for an example of a partition $P$ and its associated multigraph $P^G$.

Every eigenvector of a signed Laplacian $L^{\sigma}$ induces a partition called the \textit{nodal partition}, obtained by setting $\p P = \mathcal{N}$, where $\mathcal{N}$ is the nodal set of $\psi$ as in Definition \ref{def:nodal_set}. We will extensively refer to the nodal set in the rest of this work.

\begin{figure}
    \begin{subfigure}[c]{0.3\textwidth}
    \centering
    \begin{tikzpicture}[x=0.75pt,y=0.75pt,yscale=-1,xscale=1]
    
    \draw [color={rgb, 255:red, 245; green, 166; blue, 35 }  ,draw opacity=1 ][line width=1.5]    (313.43,146.35) -- (254.77,205.58) ;
    \draw [color={rgb, 255:red, 245; green, 166; blue, 35 }  ,draw opacity=1 ][line width=1.5]    (213.61,120.63) -- (228.43,177.35) ;
    \draw [color={rgb, 255:red, 245; green, 166; blue, 35 }  ,draw opacity=1 ][line width=1.5]    (273.9,145.67) -- (232,178.88) ;
    \draw [color={rgb, 255:red, 245; green, 166; blue, 35 }  ,draw opacity=1 ][line width=1.5]    (274.92,140.56) -- (217.7,116.54) ;
    \draw  [line width=1.5]  (192.15,93.55) .. controls (192.15,91.29) and (193.98,89.46) .. (196.24,89.46) .. controls (198.49,89.46) and (200.32,91.29) .. (200.32,93.55) .. controls (200.32,95.81) and (198.49,97.64) .. (196.24,97.64) .. controls (193.98,97.64) and (192.15,95.81) .. (192.15,93.55) -- cycle ;
    \draw  [line width=1.5]  (227.92,91) .. controls (227.92,88.74) and (229.75,86.91) .. (232,86.91) .. controls (234.26,86.91) and (236.09,88.74) .. (236.09,91) .. controls (236.09,93.25) and (234.26,95.08) .. (232,95.08) .. controls (229.75,95.08) and (227.92,93.25) .. (227.92,91) -- cycle ;
    \draw  [line width=1.5]  (209.52,116.54) .. controls (209.52,114.29) and (211.35,112.46) .. (213.61,112.46) .. controls (215.87,112.46) and (217.7,114.29) .. (217.7,116.54) .. controls (217.7,118.8) and (215.87,120.63) .. (213.61,120.63) .. controls (211.35,120.63) and (209.52,118.8) .. (209.52,116.54) -- cycle ;
    \draw  [line width=1.5]  (224.34,181.44) .. controls (224.34,179.18) and (226.17,177.35) .. (228.43,177.35) .. controls (230.68,177.35) and (232.51,179.18) .. (232.51,181.44) .. controls (232.51,183.69) and (230.68,185.52) .. (228.43,185.52) .. controls (226.17,185.52) and (224.34,183.69) .. (224.34,181.44) -- cycle ;
    \draw  [line width=1.5]  (203.9,211.58) .. controls (203.9,209.33) and (205.73,207.5) .. (207.99,207.5) .. controls (210.24,207.5) and (212.08,209.33) .. (212.08,211.58) .. controls (212.08,213.84) and (210.24,215.67) .. (207.99,215.67) .. controls (205.73,215.67) and (203.9,213.84) .. (203.9,211.58) -- cycle ;
    \draw  [line width=1.5]  (247.33,208.52) .. controls (247.33,206.26) and (249.16,204.43) .. (251.42,204.43) .. controls (253.68,204.43) and (255.51,206.26) .. (255.51,208.52) .. controls (255.51,210.78) and (253.68,212.61) .. (251.42,212.61) .. controls (249.16,212.61) and (247.33,210.78) .. (247.33,208.52) -- cycle ;
    \draw  [line width=1.5]  (273.39,143.11) .. controls (273.39,140.86) and (275.22,139.03) .. (277.48,139.03) .. controls (279.74,139.03) and (281.57,140.86) .. (281.57,143.11) .. controls (281.57,145.37) and (279.74,147.2) .. (277.48,147.2) .. controls (275.22,147.2) and (273.39,145.37) .. (273.39,143.11) -- cycle ;
    \draw  [line width=1.5]  (294.34,111.95) .. controls (294.34,109.69) and (296.17,107.86) .. (298.43,107.86) .. controls (300.69,107.86) and (302.52,109.69) .. (302.52,111.95) .. controls (302.52,114.2) and (300.69,116.03) .. (298.43,116.03) .. controls (296.17,116.03) and (294.34,114.2) .. (294.34,111.95) -- cycle ;
    \draw  [line width=1.5]  (312.22,143.11) .. controls (312.22,140.86) and (314.05,139.03) .. (316.31,139.03) .. controls (318.57,139.03) and (320.4,140.86) .. (320.4,143.11) .. controls (320.4,145.37) and (318.57,147.2) .. (316.31,147.2) .. controls (314.05,147.2) and (312.22,145.37) .. (312.22,143.11) -- cycle ;
    \draw [line width=1.5]    (231.49,183.99) -- (248.86,204.94) ;
    \draw [line width=1.5]    (225.87,185.01) -- (207.99,207.5) ;
    \draw [line width=1.5]    (247.33,208.52) -- (212.08,211.58) ;
    \draw [line width=1.5]    (199.3,96.11) -- (211.05,113.99) ;
    \draw [line width=1.5]    (200.32,93.55) -- (227.92,91) ;
    \draw [line width=1.5]    (229.45,93.55) -- (216.67,113.99) ;
    \draw [line width=1.5]    (295.87,114.5) -- (277.48,139.03) ;
    \draw [line width=1.5]    (312.22,143.11) -- (281.06,142.09) ;
    \draw [line width=1.5]    (300.98,115.01) -- (316.31,139.03) ;
    \draw  [color={rgb, 255:red, 74; green, 144; blue, 226 }  ,draw opacity=1 ][dash pattern={on 1.69pt off 2.76pt}][line width=1.5]  (258.32,130.9) .. controls (258.32,115.32) and (275.6,102.69) .. (296.91,102.69) .. controls (318.22,102.69) and (335.5,115.32) .. (335.5,130.9) .. controls (335.5,146.48) and (318.22,159.12) .. (296.91,159.12) .. controls (275.6,159.12) and (258.32,146.48) .. (258.32,130.9) -- cycle ;
    \draw  [color={rgb, 255:red, 74; green, 144; blue, 226 }  ,draw opacity=1 ][dash pattern={on 1.69pt off 2.76pt}][line width=1.5]  (175.85,101.28) .. controls (175.85,85.7) and (193.13,73.07) .. (214.44,73.07) .. controls (235.75,73.07) and (253.03,85.7) .. (253.03,101.28) .. controls (253.03,116.87) and (235.75,129.5) .. (214.44,129.5) .. controls (193.13,129.5) and (175.85,116.87) .. (175.85,101.28) -- cycle ;
    \draw  [color={rgb, 255:red, 74; green, 144; blue, 226 }  ,draw opacity=1 ][dash pattern={on 1.69pt off 2.76pt}][line width=1.5]  (188.63,198.85) .. controls (188.63,183.27) and (205.91,170.64) .. (227.22,170.64) .. controls (248.53,170.64) and (265.81,183.27) .. (265.81,198.85) .. controls (265.81,214.43) and (248.53,227.07) .. (227.22,227.07) .. controls (205.91,227.07) and (188.63,214.43) .. (188.63,198.85) -- cycle ;
    
    \draw (186.64,145.22) node [anchor=north west][inner sep=0.75pt]  [color={rgb, 255:red, 245; green, 166; blue, 35 }  ,opacity=1 ]  {$\partial P$};
    \end{tikzpicture}
    \end{subfigure}
    \begin{subfigure}[c]{0.3\textwidth}
        \centering
        \begin{tikzpicture}[x=0.75pt,y=0.75pt,yscale=-1,xscale=1]
        
        \draw  [line width=1.5]  (212.15,113.55) .. controls (212.15,111.29) and (213.98,109.46) .. (216.24,109.46) .. controls (218.49,109.46) and (220.32,111.29) .. (220.32,113.55) .. controls (220.32,115.81) and (218.49,117.64) .. (216.24,117.64) .. controls (213.98,117.64) and (212.15,115.81) .. (212.15,113.55) -- cycle ;
        \draw  [line width=1.5]  (247.92,111) .. controls (247.92,108.74) and (249.75,106.91) .. (252,106.91) .. controls (254.26,106.91) and (256.09,108.74) .. (256.09,111) .. controls (256.09,113.25) and (254.26,115.08) .. (252,115.08) .. controls (249.75,115.08) and (247.92,113.25) .. (247.92,111) -- cycle ;
        \draw  [line width=1.5]  (229.52,136.54) .. controls (229.52,134.29) and (231.35,132.46) .. (233.61,132.46) .. controls (235.87,132.46) and (237.7,134.29) .. (237.7,136.54) .. controls (237.7,138.8) and (235.87,140.63) .. (233.61,140.63) .. controls (231.35,140.63) and (229.52,138.8) .. (229.52,136.54) -- cycle ;
        \draw  [line width=1.5]  (244.34,201.44) .. controls (244.34,199.18) and (246.17,197.35) .. (248.43,197.35) .. controls (250.68,197.35) and (252.51,199.18) .. (252.51,201.44) .. controls (252.51,203.69) and (250.68,205.52) .. (248.43,205.52) .. controls (246.17,205.52) and (244.34,203.69) .. (244.34,201.44) -- cycle ;
        \draw  [line width=1.5]  (223.9,231.58) .. controls (223.9,229.33) and (225.73,227.5) .. (227.99,227.5) .. controls (230.24,227.5) and (232.08,229.33) .. (232.08,231.58) .. controls (232.08,233.84) and (230.24,235.67) .. (227.99,235.67) .. controls (225.73,235.67) and (223.9,233.84) .. (223.9,231.58) -- cycle ;
        \draw  [line width=1.5]  (267.33,228.52) .. controls (267.33,226.26) and (269.16,224.43) .. (271.42,224.43) .. controls (273.68,224.43) and (275.51,226.26) .. (275.51,228.52) .. controls (275.51,230.78) and (273.68,232.61) .. (271.42,232.61) .. controls (269.16,232.61) and (267.33,230.78) .. (267.33,228.52) -- cycle ;
        \draw  [line width=1.5]  (293.39,163.11) .. controls (293.39,160.86) and (295.22,159.03) .. (297.48,159.03) .. controls (299.74,159.03) and (301.57,160.86) .. (301.57,163.11) .. controls (301.57,165.37) and (299.74,167.2) .. (297.48,167.2) .. controls (295.22,167.2) and (293.39,165.37) .. (293.39,163.11) -- cycle ;
        \draw  [line width=1.5]  (314.34,131.95) .. controls (314.34,129.69) and (316.17,127.86) .. (318.43,127.86) .. controls (320.69,127.86) and (322.52,129.69) .. (322.52,131.95) .. controls (322.52,134.2) and (320.69,136.03) .. (318.43,136.03) .. controls (316.17,136.03) and (314.34,134.2) .. (314.34,131.95) -- cycle ;
        \draw  [line width=1.5]  (332.22,163.11) .. controls (332.22,160.86) and (334.05,159.03) .. (336.31,159.03) .. controls (338.57,159.03) and (340.4,160.86) .. (340.4,163.11) .. controls (340.4,165.37) and (338.57,167.2) .. (336.31,167.2) .. controls (334.05,167.2) and (332.22,165.37) .. (332.22,163.11) -- cycle ;
        \draw [line width=1.5]    (251.49,203.99) -- (268.86,224.94) ;
        \draw [line width=1.5]    (245.87,205.01) -- (227.99,227.5) ;
        \draw [line width=1.5]    (267.33,228.52) -- (232.08,231.58) ;
        \draw [line width=1.5]    (219.3,116.11) -- (231.05,133.99) ;
        \draw [line width=1.5]    (220.32,113.55) -- (247.92,111) ;
        \draw [line width=1.5]    (249.45,113.55) -- (236.67,133.99) ;
        \draw [line width=1.5]    (315.87,134.5) -- (297.48,159.03) ;
        \draw [line width=1.5]    (332.22,163.11) -- (301.06,162.09) ;
        \draw [line width=1.5]    (320.98,135.01) -- (336.31,159.03) ;
        \draw  [color={rgb, 255:red, 74; green, 144; blue, 226 }  ,draw opacity=1 ][dash pattern={on 1.69pt off 2.76pt}][line width=1.5]  (278.32,150.9) .. controls (278.32,135.32) and (295.6,122.69) .. (316.91,122.69) .. controls (338.22,122.69) and (355.5,135.32) .. (355.5,150.9) .. controls (355.5,166.48) and (338.22,179.12) .. (316.91,179.12) .. controls (295.6,179.12) and (278.32,166.48) .. (278.32,150.9) -- cycle ;
        \draw  [color={rgb, 255:red, 74; green, 144; blue, 226 }  ,draw opacity=1 ][dash pattern={on 1.69pt off 2.76pt}][line width=1.5]  (195.85,121.28) .. controls (195.85,105.7) and (213.13,93.07) .. (234.44,93.07) .. controls (255.75,93.07) and (273.03,105.7) .. (273.03,121.28) .. controls (273.03,136.87) and (255.75,149.5) .. (234.44,149.5) .. controls (213.13,149.5) and (195.85,136.87) .. (195.85,121.28) -- cycle ;
        \draw  [color={rgb, 255:red, 74; green, 144; blue, 226 }  ,draw opacity=1 ][dash pattern={on 1.69pt off 2.76pt}][line width=1.5]  (208.63,218.85) .. controls (208.63,203.27) and (225.91,190.64) .. (247.22,190.64) .. controls (268.53,190.64) and (285.81,203.27) .. (285.81,218.85) .. controls (285.81,234.43) and (268.53,247.07) .. (247.22,247.07) .. controls (225.91,247.07) and (208.63,234.43) .. (208.63,218.85) -- cycle ;
        
        \draw (330,135.02) node [anchor=north west][inner sep=0.75pt]  [color={rgb, 255:red, 0; green, 0; blue, 0 }  ,opacity=1 ]  {$G_{1}$};
        \draw (222,95) node [anchor=north west][inner sep=0.75pt]  [color={rgb, 255:red, 0; green, 0; blue, 0 }  ,opacity=1 ]  {$G_{2}$};
        \draw (260.04,200.82) node [anchor=north west][inner sep=0.75pt]  [color={rgb, 255:red, 0; green, 0; blue, 0 }  ,opacity=1 ]  {$G_{3}$};
        \end{tikzpicture}
    \end{subfigure}
    \begin{subfigure}[c]{0.3\textwidth}
    \centering
    \begin{tikzpicture}[x=0.75pt,y=0.75pt,yscale=-1,xscale=1]
    
    \draw [color={rgb, 255:red, 245; green, 166; blue, 35 }  ,draw opacity=1 ][line width=1.5]    (296.18,139.17) -- (189.02,100.44) ;
    \draw [color={rgb, 255:red, 245; green, 166; blue, 35 }  ,draw opacity=1 ][line width=1.5]    (209.97,206.76) -- (189.02,100.44) ;
    \draw [color={rgb, 255:red, 245; green, 166; blue, 35 }  ,draw opacity=1 ][line width=1.5]    (209.97,206.76) .. controls (206.27,166.44) and (258.31,130.82) .. (296.18,139.17) ;
    \draw [color={rgb, 255:red, 245; green, 166; blue, 35 }  ,draw opacity=1 ][line width=1.5]    (296.18,139.17) .. controls (300.53,155.05) and (270.72,210.49) .. (209.97,206.76) ;
    \draw  [color={rgb, 255:red, 74; green, 144; blue, 226 }  ,draw opacity=1 ][fill={rgb, 255:red, 74; green, 144; blue, 226 }  ,fill opacity=1 ] (175.4,100.44) .. controls (175.4,93.35) and (181.5,87.6) .. (189.02,87.6) .. controls (196.55,87.6) and (202.65,93.35) .. (202.65,100.44) .. controls (202.65,107.53) and (196.55,113.28) .. (189.02,113.28) .. controls (181.5,113.28) and (175.4,107.53) .. (175.4,100.44) -- cycle ;
    \draw  [color={rgb, 255:red, 74; green, 144; blue, 226 }  ,draw opacity=1 ][fill={rgb, 255:red, 74; green, 144; blue, 226 }  ,fill opacity=1 ] (196.35,206.76) .. controls (196.35,199.67) and (202.45,193.92) .. (209.97,193.92) .. controls (217.49,193.92) and (223.59,199.67) .. (223.59,206.76) .. controls (223.59,213.85) and (217.49,219.6) .. (209.97,219.6) .. controls (202.45,219.6) and (196.35,213.85) .. (196.35,206.76) -- cycle ;
    \draw  [color={rgb, 255:red, 74; green, 144; blue, 226 }  ,draw opacity=1 ][fill={rgb, 255:red, 74; green, 144; blue, 226 }  ,fill opacity=1 ] (282.55,139.17) .. controls (282.55,132.08) and (288.65,126.33) .. (296.18,126.33) .. controls (303.7,126.33) and (309.8,132.08) .. (309.8,139.17) .. controls (309.8,146.26) and (303.7,152.01) .. (296.18,152.01) .. controls (288.65,152.01) and (282.55,146.26) .. (282.55,139.17) -- cycle ;
    
    \draw (290.91,131.26) node [anchor=north west][inner sep=0.75pt]    {$1$};
    \draw (183.35,92.15) node [anchor=north west][inner sep=0.75pt]    {$2$};
    \draw (204.7,198.47) node [anchor=north west][inner sep=0.75pt]    {$3$};
    \end{tikzpicture}
    \end{subfigure}
    \caption{A simple graph $G$ with desired connected components $G_k$ of 3-partition $P$ circled, the resultant 3-partition $P$, and the associated multigraph $P^G$.}
    \label{fig:partition_graph}
\end{figure}
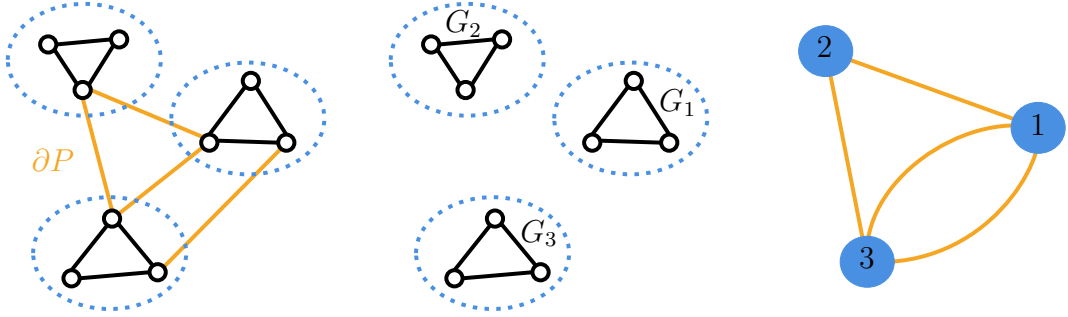

Given a partition $P$ we define a signature $\sigma^{\p P}$ on $G$ by 
\[\sigma^{\p P}_{ij} = \begin{cases}
    -1, & (i,j) \in \p P \\
    1, & (i,j) \in E \setminus \p P.
\end{cases}\]

\begin{defn}\label{def:part_lap}
    The signed Laplacian $L^{\sigma^{\p P}}$ on $(G,\sigma^{\p P})$ is called the partition Laplacian, and for ease of notation we denote it as $L^{\p P}$.
\end{defn}

From \eqref{quad_form} we have that 
\begin{equation}
    \innerp{u}{L^{\p P}u} = \sum_{(i,j) \notin \p P}w_{ij}(u_i - u_j)^2 + \sum_{(i,j) \in \p P}w_{ij}(u_i + u_j)^2.
\end{equation}
In the case that $P$ is bipartite, a straightforward application of Lemma \ref{harary} followed by Lemma \ref{zaslavsky} shows that $\sigma^{\p P}$ is switching equivalent to $\sigma \equiv 1$, so $L^{\p P}$ is unitarily equivalent to the Laplacian $L$ on $G$.

Suppose that $\psi$ is an eigenvector of $L$ and let $P$ be its associated nodal partition. Note that $P$ is necessarily bipartite since $\psi$ must have a constant sign on each nodal domain. Define a switching function $\tau:V \to \{-1,+1\}$ by $\tau_i = \text{sgn}(\psi_i)$. By construction, $\tau$ changes sign exactly across $\p P$, so $\tau$ switches $\sigma \equiv 1$ to $\sigma^{\tau} = \sigma^{\p P}$. We thus find that $\tau \psi$ is an eigenvector of $L^{\p P}$ that is positive everywhere.

Similarly, if we are given a non-bipartite partition $P$, note that any $(i,j) \in \p P$ is in the nodal set of an eigenvector $\psi$ of $L^{\p P}$ if and only if $\psi_i\psi_j>0$. Hence, if $\psi$ is postive everywhere, it must have nodal partition $P$. This is the crucial fact that allows us to relate non-bipartite partitions to nodal partitions of partition Laplacian eigenvectors.

\section{Parameter-Dependent Partitions}
\label{sec:param_part}

In order to state and prove Theorem \ref{thm:main_result}, we need to develop some techniques that allow us consider smooth perturbations of a graph partition. These strategies were developed in \cite{berkolaikoStabilityNodalStructures2012} but need to be adapted slightly to the case of the partition Laplacian. For sake of completeness, we give the arguments in full here.

Let $P$ be a $\nu$-partition of $G$ and let $L^{\p P}$ be the partition Laplacian as defined in Section \ref{part_lap_subsection}. For the rest of the paper, this partition $P$ is fixed and we only consider small perturbations of $P$ in a sense that we will define in this section.

\subsection{Removal of a single edge}
Suppose that $\psi$ is a non-degenerate eigenvector of $L^{\p P}$ with eigenvalue $\lambda_n$, and let $(i,j) \in \p P$. We will remove $(i,j)$ from our graph, but ask how we can perturb the partition Laplacian appropriately so that $\psi$ remains an eigenvector.
Define $q_{ij} = \psi_j/\psi_i$ and let $B_{ij}$ be the $|V|\times |V|$ matrix consisting of zeros everywhere except
\[(B_{ij})_{ii} = q_{ij} \hspace{1cm} (B_{ij})_{ij} = -1\]
\[(B_{ij})_{ji} =-1 \hspace{1cm} (B_{ij})_{jj} = q_{ji}.\]
A straightforward calculation shows that $\psi \in \ker(B_{ij})$, so $\psi$ is still an eigenvector of $L^{\p P} + w_{ij}B_{ij}$ with the same eigenvalue. Also note that perturbing $L^{\p P}$ by $w_{ij}B_{ij}$ effectively removes the edge $(i,j)$ since $(L^{\p P} + w_{ij}B_{ij})_{ij} = w_{ij} - w_{ij} = 0.$

We can generalize this by allowing $B_{ij}$ to depend smoothly on some parameter $\alpha\in \R\setminus \{0\}$ in place of $q_{ij}$. Let
\begin{equation}\label{parameter_perturbation}
    B_{ij}(\alpha) = \begin{pmatrix}
    \alpha & -1 \\
    -1 & 1/\alpha
    \end{pmatrix}
\end{equation}
and define
\begin{equation}\label{perturbed_part_lap}
    L^{\p P}(G',\alpha) = L^{\p P}(G) + w_{ij}B_{ij}(\alpha),
\end{equation}
where $L^{\p P}(G)$ is the partition Laplacian on the base graph $G$ and $G'$ is the resultant graph when $(i,j)$ is removed from $G$. To be explicit, when we write $B_{ij}(\alpha)$ we will always take $j>i$, so that the potential added on vertex $j$ will always be $1/\alpha$.

When $\alpha = \psi_j/\psi_i$ see that $\psi \in \ker(B_{ij}(\alpha))$ so $\psi$ is an eigenvector of $L^{\p P}$. Note that 
$\innerp{u}{B_{ij}(\alpha)u} = \alpha(u_i - u_j/\alpha)^2$,
so $B_{ij}(\alpha)$ is positive semidefinite when $\alpha>0$ and negative semidefinite when $\alpha < 0$.  In either case, $B_{ij}(\alpha)$ is a rank-1 perturbation. We then have at our disposal the Weyl interlacing theorem \cite[Corollary 4.3.9]{hornMatrixAnalysis2012} to bound the eigenvalues of $L^{\p P}(G',\alpha)$ between consecutive eigenvalues of $L^{\p P}(G)$.

\begin{theorem}[Weyl Interlacing]\label{weyl_interlacing}
    Let $\lambda_{m}(G',\alpha)$ denote the $m^{th}$ eigenvalue of $L^{\p P}(G',\alpha)$. If $\alpha > 0$, we have 
    \begin{equation}
        \lambda_m(G) \leq \lambda_{m}(G',\alpha) \leq \lambda_{m+1}(G).
    \end{equation}
    If $\alpha < 0$,
    \begin{equation}
        \lambda_{m-1}(G) \leq \lambda_{m}(G',\alpha) \leq \lambda_m(G).
    \end{equation}
\end{theorem}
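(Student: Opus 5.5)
Since $B_{ij}(\alpha)$ has rank one and a definite sign determined by the sign of $\alpha$, this statement is a direct instance of the classical Weyl interlacing inequalities for rank-one Hermitian perturbations \cite[Corollary 4.3.9]{hornMatrixAnalysis2012}. The plan is therefore to rewrite $w_{ij}B_{ij}(\alpha)$ explicitly as $\pm zz^{T}$ and then quote that result (or reprove it by Courant--Fischer).

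\textbf{Step 1: factor the perturbation.} For $\alpha>0$, set $z = \sqrt{w_{ij}\alpha}\,(e_i - \alpha^{-1}e_j)$. Then
\[
zz^T = w_{ij}\begin{pmatrix}\alpha & -1\\ -1 & 1/\alpha\end{pmatrix}
\]
on the $\{i,j\}$ block and vanishes elsewhere, so $w_{ij}B_{ij}(\alpha) = zz^T$ is positive semidefinite of rank one. For $\alpha<0$, set $z=\sqrt{w_{ij}|\alpha|}\,(e_i-\alpha^{-1}e_j)$. The same computation gives $w_{ij}B_{ij}(\alpha) = -zz^T$, which is negative semidefinite of rank one. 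This matches the identity $\innerp{u}{B_{ij}(\alpha)u}=\alpha(u_i-u_j/\alpha)^2$ already noted in the text.

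\textbf{Step 2: apply interlacing.} For $A$ Hermitian and $A' = A + zz^T$, the cited corollary gives $\lambda_m(A)\le\lambda_m(A')\le\lambda_{m+1}(A)$. With $A=L^{\p P}(G)$ this is exactly the claim for $\alpha>0$. For $\alpha<0$, write $L^{\p P}(G) = L^{\p P}(G',\alpha) + zz^T$ and apply the same inequality with the roles of the two matrices swapped. This yields
\[
\lambda_m(G',\alpha)\le\lambda_m(G)\le\lambda_{m+1}(G',\alpha),
\]
and reindexing the right inequality ($m\to m-1$) gives $\lambda_{m-1}(G)\le\lambda_m(G',\alpha)$. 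At the ends, the conventions $\lambda_0=-\infty$ and $\lambda_{|V|+1}=+\infty$ make the statement vacuous.

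\textbf{Step 3 (optional self-contained route).} To avoid citing the corollary, I would use Courant--Fischer. Lower bound for $\alpha>0$: the Rayleigh quotient can only increase under adding $zz^T\succeq 0$. Upper bound: take the min-max over the $(m+1)$-dimensional span of the first $m+1$ eigenvectors of $L^{\p P}(G)$, intersected with $z^\perp$. This intersection has dimension at least $m$, and on it the two quadratic forms coincide, so $\lambda_m(G',\alpha)\le\lambda_{m+1}(G)$.

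There is no real obstacle here. The only points needing care are the sign bookkeeping for $\alpha<0$ and the index shift at the endpoints $m=1$ and $m=|V|$.
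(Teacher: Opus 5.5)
Your proposal is correct and follows the paper's argument. The paper likewise notes that $\innerp{u}{B_{ij}(\alpha)u}=\alpha(u_i-u_j/\alpha)^2$, so $w_{ij}B_{ij}(\alpha)$ is a rank-one positive semidefinite perturbation for $\alpha>0$ (negative semidefinite for $\alpha<0$), and then cites \cite[Corollary 4.3.9]{hornMatrixAnalysis2012}; your explicit factorization $w_{ij}B_{ij}(\alpha)=\pm zz^T$, the role swap with reindexing for $\alpha<0$, and the optional Courant--Fischer argument only make that citation explicit.
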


We now consider the eigenvalue curves of $L^{\p P}(G',\alpha)$ as functions of $\alpha$, i.e. the maps $\alpha \mapsto \lambda_m(G',\alpha)$. We also write $\psi(\alpha)$ to denote an eigenvector associated to $\lambda_m(G',\alpha)$. Standard finite-dimensional perturbation theory shows that each $\lambda_m(G',\alpha)$ is smooth except possibly at some isolated algebraic singularities that arise when the branches cross \cite[Chapter 2.1]{katoPerturbationTheoryLinear1995}. In other words, if $\lambda_m(G',\alpha_0)$ is a simple eigenvalue of $L^{\p P}(G',\alpha_0)$, there is some neighborhood of $\alpha_0$ on which $\lambda_m(G',\alpha)$ is smooth. 

The following theorem shows that there is a direct correspondence between critical points of the eigenvalues of $L^{\p P}(G',\alpha)$ viewed as functions of $\alpha$, and non-degenerate eigenvectors of $L^{\p P}$.

\begin{theorem}\label{thm:crit_points_and_efuns}
    Suppose that $\lambda_m(G',\wtil{\alpha})$ is a simple eigenvalue of $L^{\p P}(G',\wtil{\alpha})$ with corresponding eigenvector $\psi(\wtil{\alpha})$. Then in a neighborhood of $\wtil{\alpha}$ not containing zero, any critical point $\alpha^c$ of the map $\alpha \mapsto \lambda_m(G',\alpha)$ satisfies either
    \begin{equation}\label{pos_crit_point}
        \alpha^c = \frac{\psi_j(\alpha^c)}{\psi_i(\alpha^c)}
    \end{equation}
    or 
    \begin{equation}\label{neg_crit_point}
        \alpha^c = -\frac{\psi_j(\alpha^c)}{\psi_i(\alpha^c)}.
    \end{equation}
    If $\alpha^c$ satisfies \eqref{pos_crit_point}, $\psi(\alpha^c)$ is an eigenvector of $L^{\p P}(G)$ with eigenvalue $\lambda_m(G',\alpha^c)$ and corresponding index $n$ satisfying $|m-n|\leq 1$.

    Conversely, if $\psi$ is a non-degenerate eigenvector of $L^{\p P}(G)$ with eigenvalue $\lambda_n(G)$, then $\psi$ is an eigenvector of $L^{\p P}(G',\alpha_0)$ with eigenvalue $\lambda_n(G)$, where 
    \begin{equation}
        \alpha_0 = \frac{\psi_j}{\psi_i},
    \end{equation}
    with corresponding index $m$ satisfying $|n-m|\leq 1$. If $\lambda_m(G',\alpha_0)$ is a simple eigenvalue of $L^{\p P}(G',\alpha_0)$, then $\alpha_0$ is a critical point of $\alpha \mapsto \lambda_m(G',\alpha)$.
\end{theorem}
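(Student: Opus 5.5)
The plan is to combine the Hellmann–Feynman formula with the rank-one structure of $B_{ij}(\alpha)$. Near $\wtil{\alpha}$ the eigenvalue $\lambda_m(G',\alpha)$ is simple, so it and a normalized eigenvector $\psi(\alpha)$ depend smoothly on $\alpha$ (Kato). Differentiating $\innerp{\psi(\alpha)}{L^{\p P}(G',\alpha)\psi(\alpha)}$ with $\|\psi(\alpha)\|=1$ gives
\[
\frac{d}{d\alpha}\lambda_m(G',\alpha) = w_{ij}\innerp{\psi(\alpha)}{B_{ij}'(\alpha)\psi(\alpha)} = w_{ij}\Bigl(\psi_i(\alpha)^2 - \frac{\psi_j(\alpha)^2}{\alpha^2}\Bigr),
\]
since $B_{ij}'(\alpha)=\mathrm{diag}(1,-1/\alpha^2)$ on the $(i,j)$ block. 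At a critical point $\alpha^c\neq 0$ this forces $(\alpha^c)^2\psi_i^2=\psi_j^2$, i.e. $\alpha^c=\pm\psi_j(\alpha^c)/\psi_i(\alpha^c)$ whenever $\psi_i(\alpha^c)\neq 0$. If $\psi_i(\alpha^c)=0$, the same identity gives $\psi_j(\alpha^c)=0$, and then both ratios are undefined; I will either note that this case is excluded by interpreting the identities as $\alpha^c\psi_i=\pm\psi_j$, or assume $\psi_i\ne0$. This settles \eqref{pos_crit_point}--\eqref{neg_crit_point}.

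In the positive case, $B_{ij}(\alpha^c)\psi(\alpha^c)$ has $i$-entry $\alpha^c\psi_i-\psi_j=0$ and $j$-entry $-\psi_i+\psi_j/\alpha^c=0$, so $\psi(\alpha^c)\in\ker B_{ij}(\alpha^c)$. Therefore
\[
L^{\p P}(G)\psi(\alpha^c)=L^{\p P}(G',\alpha^c)\psi(\alpha^c)=\lambda_m(G',\alpha^c)\psi(\alpha^c),
\]
so $\psi(\alpha^c)$ is an eigenvector of $L^{\p P}(G)$ for some eigenvalue $\lambda_n(G)$. The index estimate comes from Theorem \ref{weyl_interlacing}. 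For $\alpha>0$ we have $\lambda_m(G)\le\lambda_m(G',\alpha)\le\lambda_{m+1}(G)$, so the value equals $\lambda_n(G)$ for some $n\in\{m,m+1\}$, up to how indices are assigned within ties. For $\alpha<0$ similarly $n\in\{m-1,m\}$. In both cases $|m-n|\le1$. If the eigenvalue of $G$ is repeated, I will choose $n$ as any index carrying that value lying in this window; interlacing guarantees one exists.

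For the converse, set $\alpha_0=\psi_j/\psi_i$, which is well defined and nonzero by non-degeneracy. The same kernel computation gives $L^{\p P}(G',\alpha_0)\psi=\lambda_n(G)\psi$, so $\lambda_n(G)=\lambda_m(G',\alpha_0)$ for some $m$, and interlacing again yields $|n-m|\le1$. If $\lambda_m(G',\alpha_0)$ is simple, the branch is smooth near $\alpha_0$ and $\psi$ spans its eigenspace. The derivative formula then gives $w_{ij}(\psi_i^2-\psi_j^2/\alpha_0^2)=0$, so $\alpha_0$ is critical.

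I expect the main obstacle to be bookkeeping rather than analysis. One issue is justifying smoothness and the Hellmann–Feynman formula on a neighborhood avoiding $0$, since $1/\alpha$ blows up at $0$. The other is stating the index matching carefully when $\lambda_n(G)$ is not simple in the forward direction; I will handle that by the interlacing window argument above.
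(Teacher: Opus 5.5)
Your proposal is correct and follows the paper's proof essentially step by step. Both arguments use the Hellmann--Feynman derivative of the simple branch, solve $\psi_i^2=\psi_j^2/\alpha^2$ for the critical points, use the kernel computation $B_{ij}(\alpha)\psi=0$ at $\alpha=\psi_j/\psi_i$ in both the forward and converse directions, and get the index bound $|m-n|\le 1$ from Weyl interlacing. Your extra bookkeeping only tightens points the paper passes over: you keep the factor $w_{ij}$ that the paper drops from the derivative, treat the degenerate case $\psi_i(\alpha^c)=\psi_j(\alpha^c)=0$, allow $\alpha^c<0$ in the plus-sign case, and choose the index inside the interlacing window when eigenvalues tie.
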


\begin{proof}
    From standard finite-dimensional perturbation theory, if we assume $\psi(\alpha)$ is normalized we have 
    \begin{equation}
        \lambda_m'(G',\alpha) = \innerp{\psi(\alpha)}{B_{ij}'(\alpha)\psi(\alpha)} = |\psi_i(\alpha)|^2 - \frac{1}{\alpha^2}|\psi_j(\alpha)|^2,
    \end{equation}
    so if $\lambda_m'(G',\alpha^c) =0$, solving for $\alpha^c$ gives
    $(\alpha^c)^2 = |\psi_j(\alpha^c)|^2/|\psi_i(\alpha^c)|^2$.
    Taking square roots gives critical points satisfying \eqref{pos_crit_point} and \eqref{neg_crit_point}.
    If $\alpha^c$ satisfies \eqref{pos_crit_point}, a straightforward computation shows that $\psi(\alpha^c) \in \ker(B_{ij}(\alpha^c))$ so $\psi(\alpha^c)$ is an eigenvector of $L^{\p P}(G)$ with eigenvalue $\lambda_m(G',\alpha^c)$. 

    On the other hand, if $\psi$ is a non-degenerate eigenvector of $L^{\p P}(G)$, the same computation shows that $\psi$ is an eigenvector of $L^{\p P}(G',\alpha_0)$, where $\alpha_0 = \psi_j/\psi_i$,
    and this $\alpha_0$ is a critical point of $\lambda_m(G',\alpha)$, as long as $\lambda_m(G',\alpha_0)$ is simple.

    Weyl interlacing shows that perturbation by $B_{ij}(\alpha)$ interlaces the spectra of $L^{\p P}(G)$ and $L^{\p P}(G',\alpha)$, so $|n-m|\leq 1$ where $n$ is the index of the eigenvalue in the spectrum of $L^{\p P}(G)$.
\end{proof}

It will be useful to temporarily treat $G'$ as our base graph, and investigate how the spectrum of $L^{\p P}(G',\alpha)$ changes as we add back the edge $(i,j)$ and remove the potentials.  If $\alpha > 0$, we have that $\lambda_m(G)\leq \lambda_m(G',\alpha)\leq \lambda_{m+1}(G)$. If $\alpha^c>0$ is a critical point of $\lambda_m(G',\alpha)$, Theorem \ref{thm:crit_points_and_efuns} shows that $\lambda_m(G',\alpha^c)$ must be an eigenvalue of $L^{\p P}$. Hence, we must have either $\lambda_m(G',\alpha^c) = \lambda_m(G)$ or $\lambda_m(G',\alpha^c) = \lambda_{m+1}(G)$. This means that if $\lambda(G',\alpha^c)$ is a local minimum, there is no shift in the eigenvalue index going from $G'$ to $G$ by adding $(i,j)$. On the other hand, if $\lambda_m(G',\alpha^c)$ is a local maximum, there is a shift of $+1$ in the index going from $G'$ to $G$. A similar argument can be made in the case that $\alpha^c < 0$, where the index shifts by $-1$ if $\lambda_{m}(G',\alpha^c)$ is a local minimum, and by $0$ if $\lambda_{m}(G',\alpha^c)$ is a local maximum. We have thus proven the following.

\begin{coro}\label{coro:spectral_shift}
    Let $\alpha^c \neq 0$ be a critical point of $\lambda_m(G',\alpha)$ and suppose that $\lambda_m(G',\alpha^c)$ is simple, so $\lambda_m(G',\alpha^c)$ is an eigenvalue of $L^{\p P}(G)$. Let $\Delta m_{ij}$ denote the shift in position of the eigenvalue in the spectrum going from $G'$ to $G$ by subtracting $w_{ij}B_{ij}(\alpha^c)$ from $L^{\p P}(G',\alpha)$. If $\alpha^c > 0$ we have 
    \begin{equation}\label{positive_alpha_spec_shift}
        \Delta m_{ij} = \begin{cases}
            +1, & \lambda_m(G',\alpha^c) \text{ is a local maximum}, \\
            0, &\lambda_m(G',\alpha^c) \text{ is a local minimum}.
        \end{cases}
    \end{equation}
    If $\alpha^c < 0$,
    \begin{equation}\label{negative_alpha_spec_shift}
        \Delta m_{ij} = \begin{cases}
            0, & \lambda_m(G',\alpha^c) \text{ is a local maximum}, \\
            -1, &\lambda_m(G',\alpha^c) \text{ is a local minimum}.
        \end{cases}
    \end{equation}
    Let $M_{ij} = 1$ if $\lambda_m(G',\alpha^c)$ is a local maximum, and $M_{ij} = 0$ otherwise. Then, we can combine \eqref{positive_alpha_spec_shift} and \eqref{negative_alpha_spec_shift} to write the more concise
    \begin{equation}\label{spectral_shift_formula}
        M_{ij} - \Delta m_{ij} = \begin{cases}
            0, & \alpha^c > 0 \\
            1, & \alpha^c < 0.
        \end{cases}
    \end{equation}
\end{coro}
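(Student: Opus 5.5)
The plan is to read the corollary directly off the Weyl interlacing bounds of Theorem \ref{weyl_interlacing} combined with Theorem \ref{thm:crit_points_and_efuns}. Set $\lambda^c := \lambda_m(G',\alpha^c)$. The first step is to check that $\lambda^c$ is an eigenvalue of $L^{\p P}(G)$. If $\alpha^c$ satisfies \eqref{pos_crit_point}, this is exactly Theorem \ref{thm:crit_points_and_efuns}. The case \eqref{neg_crit_point} is not covered there, and it is taken as a hypothesis of the corollary in the form ``$\lambda_m(G',\alpha^c)$ is an eigenvalue of $L^{\p P}(G)$''. I will use only this fact together with interlacing.

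\textbf{Case $\alpha^c>0$.} For all $\alpha>0$ near $\alpha^c$, interlacing gives
\[
\lambda_m(G)\le\lambda_m(G',\alpha)\le\lambda_{m+1}(G).
\]
Since $\lambda^c$ is an eigenvalue of $L^{\p P}(G)$ lying in $[\lambda_m(G),\lambda_{m+1}(G)]$, it equals one of the endpoints. (If $\lambda_m(G)=\lambda_{m+1}(G)$, the curve is pinned constant, contradicting simplicity of $\lambda^c$ generically; I would set this degenerate case aside.)
\begin{itemize}
\item If $\lambda^c=\lambda_{m+1}(G)$, then $\lambda_m(G',\cdot)$ attains its upper barrier at $\alpha^c$, so $\alpha^c$ is a local maximum. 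The eigenvalue moves from index $m$ in $G'$ to index $m+1$ in $G$, so $\Delta m_{ij}=+1$.
\item If $\lambda^c=\lambda_m(G)$, it attains the lower barrier, so $\alpha^c$ is a local minimum and $\Delta m_{ij}=0$.
\end{itemize}
The two alternatives are exclusive because $\lambda_m(G)<\lambda_{m+1}(G)$. This yields \eqref{positive_alpha_spec_shift}.

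\textbf{Case $\alpha^c<0$.} The argument is symmetric, using the bounds
\[
\lambda_{m-1}(G)\le\lambda_m(G',\alpha)\le\lambda_m(G).
\]
\begin{itemize}
\item Attaining $\lambda_m(G)$ means a local maximum with $\Delta m_{ij}=0$.
\item Attaining $\lambda_{m-1}(G)$ means a local minimum with $\Delta m_{ij}=-1$.
\end{itemize}
This gives \eqref{negative_alpha_spec_shift}.

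Formula \eqref{spectral_shift_formula} then follows by checking the four cases. For $\alpha^c>0$, $M_{ij}-\Delta m_{ij}$ equals $1-1=0$ at a maximum and $0-0=0$ at a minimum. For $\alpha^c<0$, it equals $1-0=1$ at a maximum and $0-(-1)=1$ at a minimum.

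The one subtle point is the step ``equal to a barrier implies local extremum of that type''. It holds because the interlacing inequality is valid for every $\alpha$ of the same sign in a neighbourhood of $\alpha^c$, which excludes $0$, so the curve cannot cross the barrier there.

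A second subtlety is the phrase ``shift in position'' when $\lambda^c$ is a repeated eigenvalue of $L^{\p P}(G)$. I would resolve it by invoking simplicity, or non-degeneracy, of the corresponding eigenvalue of $G$ so that the index is well defined.
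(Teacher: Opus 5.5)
Your proposal is correct and is essentially the paper's argument. Interlacing on a same-sign neighbourhood of $\alpha^c$ traps $\lambda_m(G',\alpha)$ between consecutive eigenvalues of $L^{\p P}(G)$, so the critical value sits on one of the two barriers, and which barrier it touches fixes both the max/min type and the index shift. You are also right to treat the eigenvalue property as a hypothesis: for simple $\lambda_m(G',\alpha^c)$ it fails at critical points of type \eqref{neg_crit_point} with $\psi_j\neq0$.

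One small correction to your aside. When $\lambda_m(G)=\lambda_{m+1}(G)$, the constant branch can perfectly well be simple in $G'$, so this case is not excluded by "simplicity of $\lambda^c$". What excludes it is simplicity of the eigenvalue in $G$, as you eventually invoke. That same assumption, together with $\psi_i\neq0$, also rules out a locally constant branch, which is what makes ``local maximum'' and ``local minimum'' genuinely exclusive.
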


\subsection{Upgrading to partitions}
So far we have only discussed the procedure for removing a single edge from $G$, but we can apply the perturbations in Theorem \ref{thm:crit_points_and_efuns} repeatedly, introducing a new parameter $\alpha_{ij}$ for each $(i,j) \in \p P$ we remove from $G$. Let $\R^{|\p P|}_{>0}$ consist of all vectors in $\R^{|\p P|}$ with strictly positive entries, and let $\alpha \in \R^{|\p P|}_{>0}$ have components $\alpha_{ij}$ for each $(i,j) \in \p P$. We restrict to $\alpha$ with strictly positive entries since these will be the points that correspond to eigenvectors of $L^{\p P}$. We define 
\begin{equation}
    L^{\p P}(P,\alpha) = L^{\p P}(G) + \sum_{(i,j) \in \p P}w_{ij}B_{ij}(\alpha_{ij}),
\end{equation}
where $L^{\p P}(P,\alpha)$ is the resulting operator when we have removed all of $\p P$ from $G$. We call the pair $(P,\alpha)$ a \textit{parameter-dependent partition} of $G$.

Since we have removed all of $\p P$, it follows that $L^{\p P}(P,\alpha)$ has a block diagonal structure, with each block corresponding to one connected component of $P$. If the connected components of $P$ are labeled $\{G_{k}\}_{k=1}^{\nu}$, we write
\begin{equation}
    L^{\p P}(P,\alpha) = \begin{pmatrix}
            L^{\p P}(G_1,\alpha) & & \\
            & \ddots & \\
            & & L^{\p P}(G_{\nu},\alpha)
        \end{pmatrix}
\end{equation}
where $L^{\p P}(G_k,\alpha)$ is the block corresponding to $G_k$. Note that $L^{\p P}(G_k,\alpha)$ is simply the usual Laplacian on $G_k$ with an added $\alpha$-dependent potential on vertices in $G_k$ that are incident to $\p P$. Hence, each $L^{\p P}(G_k,\alpha)$ is a generalized Laplacian \cite[Chapter 2]{biyikogluLaplacianEigenvectorsGraphs2007}.

For each $k=1,\dots,\nu$, let $\lambda_1(G_k,\alpha)$ denote the first eigenvalue of $L^{\p P}(G_k,\alpha)$. From \cite[Corollary 2.23]{biyikogluLaplacianEigenvectorsGraphs2007}, we know that $\lambda_1(G_k,\alpha)$ is a simple eigenvalue, and the corresponding eigenvector $f(G_k,\alpha)$ can be chosen to be strictly positive on $G_k$. We consider $f(G_k,\alpha)$ to be normalized so that
$\sum_{i \in V_k}|f_{i}(G_k,\alpha)|^2 = 1$.
When convenient we will consider $f(G_k,\alpha)$ to be a function on all of $V$ through extending by zero to the rest of $G$.

\begin{defn}\label{partition_energy}
    Given a parameter-dependent $\nu$-partition $(P,\alpha)$, we define the partition energy as
    \begin{equation}
        \Lambda(P,\alpha) = \max_{k=1,\dots,\nu}\lambda_1(G_k,\alpha).
    \end{equation}
    We say that $(P,\alpha)$ is an equipartition if 
    \begin{equation}
        \Lambda(P,\alpha) = \lambda_1(G_{1},\alpha) = \cdots = \lambda_1(G_{\nu},\alpha).
    \end{equation}
\end{defn}

It is not hard to verify that any parameter-dependent partition $(P,\alpha)$ that is locally minimal (with respect to variation of $\alpha$) must be an equipartition. This means that to find minimal partitions, we can limit our search to equipartitions. Given a partition $P$, define 
$\mathcal{E}_P = \{\alpha \in \R^{|\p P|}_{>0}: (P,\alpha) \text{ is an equipartition}\}$.

We find that non-degenerate eigenvectors of $L^{\p P}$ with nodal partition $P$ necessarily generate equipartitions.

\begin{lemma}\label{lem:evec_induces_equipartition}
    Suppose that $\psi$ is a non-degenerate eigenvector of $L^{\p P}$ with nodal partition $P$ and eigenvalue $\lambda_n$. Then, $\psi$ induces an equipartition $(P,\wtil{\alpha})$ by setting 
    \[\wtil{\alpha}_{ij} = \frac{\psi_j}{\psi_i},\]
    and the equipartition energy is given by $\Lambda(P,\wtil{\alpha}) = \lambda_n$.
\end{lemma}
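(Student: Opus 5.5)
The plan is to show directly that $\psi$, restricted to each component $G_k$, is a positive eigenvector of $L^{\p P}(G_k,\wtil{\alpha})$ with eigenvalue $\lambda_n$, and then use simplicity and positivity of the ground state on each block to identify $\lambda_n$ with $\lambda_1(G_k,\wtil{\alpha})$ for every $k$.

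\textbf{Step 1: the parameters are admissible and $\psi$ persists.} Since $P$ is the nodal partition of $\psi$, every $(i,j)\in\p P$ lies in the nodal set, so $\psi_i\sigma^{\p P}_{ij}\psi_j<0$. Because $\sigma^{\p P}_{ij}=-1$ on $\p P$, this means $\psi_i\psi_j>0$, and hence $\wtil{\alpha}_{ij}=\psi_j/\psi_i>0$. Non-degeneracy ensures the quotient is defined. Thus $\wtil{\alpha}\in\R^{|\p P|}_{>0}$, and $(P,\wtil{\alpha})$ is a genuine parameter-dependent partition. As in the single-edge computation preceding Theorem~\ref{thm:crit_points_and_efuns}, $B_{ij}(\wtil{\alpha}_{ij})\psi=0$ for each $(i,j)\in\p P$. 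Summing over the boundary gives $L^{\p P}(P,\wtil{\alpha})\psi=L^{\p P}\psi=\lambda_n\psi$.

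\textbf{Step 2: restrict to blocks.} Since $L^{\p P}(P,\wtil{\alpha})$ is block diagonal, $\psi|_{V_k}$ is an eigenvector of $L^{\p P}(G_k,\wtil{\alpha})$ with eigenvalue $\lambda_n$, and it is nonzero. Next we check that $\psi$ has constant sign on $G_k$. Every edge of $G_k$ lies outside $\p P=\mathcal{N}$ and has signature $+1$, so $\psi_i\psi_j>0$ along each such edge. As $G_k$ is connected, $\psi$ is either strictly positive or strictly negative on $V_k$. Replacing $\psi|_{V_k}$ by its negative if necessary, we obtain a strictly positive eigenvector of the generalized Laplacian $L^{\p P}(G_k,\wtil{\alpha})$.

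\textbf{Step 3: identify the eigenvalue.} This is the only step needing a real argument. A strictly positive eigenvector of a generalized Laplacian on a connected graph must belong to $\lambda_1$. The reason is that the ground state $f(G_k,\wtil{\alpha})$ is simple and strictly positive by \cite[Corollary 2.23]{biyikogluLaplacianEigenvectorsGraphs2007}. If $\lambda_n\neq\lambda_1(G_k,\wtil{\alpha})$, then symmetry would force $\innerp{f(G_k,\wtil{\alpha})}{\psi|_{V_k}}=0$, which is impossible for two strictly positive vectors. Hence $\lambda_1(G_k,\wtil{\alpha})=\lambda_n$ for every $k$. It follows that $(P,\wtil{\alpha})$ is an equipartition, i.e.\ $\wtil{\alpha}\in\mathcal{E}_P$, with $\Lambda(P,\wtil{\alpha})=\lambda_n$.
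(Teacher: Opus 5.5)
Your proof is correct and follows the same route as the paper's: $\psi \in \ker B_{ij}(\wtil{\alpha}_{ij})$ for every $(i,j)\in\p P$, block-diagonality of $L^{\p P}(P,\wtil{\alpha})$ to restrict $\psi$ to each $G_k$, and positivity on each block to identify $\lambda_n$ with the ground-state eigenvalue. You also supply details the paper leaves implicit: the check $\wtil{\alpha}_{ij}>0$, the constant-sign argument on each $G_k$, and the orthogonality argument showing that a positive eigenvector must be the ground state.
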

Recall from the discussion at the end of Section \ref{part_lap_subsection} that $\psi$ having nodal partition $P$ implies that $\wtil{\alpha}_{ij}>0$ for all $(i,j) \in \p P$.
\begin{proof}
    This choice of $\wtil{\alpha}$ results in $\psi$ being in the kernel of our perturbation $B_{ij}(\wtil{\alpha}_{ij})$ for all $(i,j) \in \p P$. Hence, $\psi$ is an eigenvector of $L^{\p P}(P;\wtil{\alpha})$.
    Since $L^{\p P}(P;\wtil{\alpha})$ is block diagonal, if $\psi^{(k)} = \psi|_{G_k}$ extended by zero we find that $\psi^{(k)}$ must be an eigenvector of $L^{\p P}(G_k;\wtil{\alpha})$ with eigenvalue $\lambda_n$, as desired. Additionally, since $\psi$ is positive on each $G_k$ we see that $\psi^{(k)}$ is the ground state eigenvector on $G_k$, so $\lambda_n$ is the first eigenvalue. 
\end{proof}

\section{Proof of Theorem \ref{thm:main_result}}\label{sec:main_result}

Our main result shows that for certain partitions, $\mathcal{E}_P$ can be given a smooth structure that allows us to analyze critical points of the map $\Lambda:\mathcal{E}_P \to \R$ mapping $\alpha \mapsto \Lambda(P,\alpha)$. In \cite{berkolaikoStabilityNodalStructures2012}, the authors give an explicit embedding of $\mathcal{E}_P$ in $\R^{|\p P|}$ but we take a different approach here using transversality, inspired by the methods in \cite{berkolaikoCriticalPartitionsNodal2012}. 

Let $\wtil{\alpha} \in \mathcal{E}_P$. Fix $\ep>0$ such that $B_{\ep}(\wtil{\alpha})$ does not intersect any of the hyperplanes $\alpha_{ij} = 0$ and also such that the variation of $\Lambda(P,\alpha)$ over $B_{\ep}(\wtil{\alpha})$ is less than the minimum separation of distinct eigenvalues in the spectrum of $L^{\p P}(G)$. This second condition is what allows us to treat $\alpha \mapsto \Lambda(P,\alpha)$ as a smooth map when restricted to $\mathcal{E}_P$.

We define a map $\Phi:B_{\ep}(\wtil{\alpha}) \to \R^{\nu}$ as 
\begin{equation}\label{phi_map}
    \Phi(\alpha) = (\lambda_1(G_1,\alpha),\dots,\lambda_1(G_{\nu},\alpha)).
\end{equation}
It is clear that $\Phi$ is smooth since each eigenvalue $\lambda_1(G_k,\alpha)$ is simple. Let $\lap \sub \R^{\nu}$ be the diagonal
\begin{equation}\label{diagonal}
    \lap = \{(\lambda,\lambda,\dots,\lambda) \in \R^{\nu}: \lambda \in \R\} = \spa\{(1,1,\dots,1)\}.
\end{equation}
We denote the tangent space of $\R^{\nu}$ at $x$ by $T_x\R^{\nu}$ and the tangent space of $\lap$ at $x$ by $T_{x}\lap$. Note that $T_{x}\lap$ is one-dimensional and $T_{x}\lap = \lap$. We also let $d\Phi_{x}:T_{x}\R^{|\p P|} \to T_{\Phi(x)}\R^{\nu}$ denote the differential of $\Phi$ at $x$, which we identify with a $\nu \times |\p P|$ matrix. Note also that $\Phi^{-1}(\lap) = \mathcal{E}_P\cap B_{\ep}(\wtil{\alpha}).$

\begin{defn}[{\cite[Definition 8.1]{benedettiLecturesDifferentialTopology2021}}]
    We say that $\Phi$ is transversal to $\lap \sub \R^{\nu}$ if for any $\alpha \in \Phi^{-1}(\lap)$ with $\Phi(\alpha) = \zeta \in \R^{\nu}$, we have
    \[T_{\zeta}\R^{\nu} = T_{\zeta}\lap + d\Phi_{\alpha}(T_{\alpha}\R^{|\p P|}),\]
    where the addition denotes vector addition.
\end{defn}
Of course, we can identify the tangent space of $\R^n$ with itself so an equivalent condition is that
\[\R^{\nu} =T_{\zeta}\lap + d\Phi_{\alpha}(T_{\alpha}\R^{|\p P|}) =  \lap + d\Phi_{\alpha}(\R^{|\p P|}).\]
The key result we use is the following:
\begin{theorem}[{\cite[Theorem 8.2]{benedettiLecturesDifferentialTopology2021}}]\label{transvers_thm}
    If $\Phi:B_{\ep}(\wtil{\alpha}) \to \R^{\nu}$ is transversal to $\lap$, then $\Phi^{-1}(\lap)$ is a smooth submanifold of $\R^{|\p P|}$ with codimension $\nu - 1$. In other words, $\mathcal{E}_P\cap B_{\ep}(\wtil{\alpha})$ is a smooth submanifold of $\R^{|\p P|}$ of dimension $\eta = |\p P| - (\nu -1 )$.
\end{theorem}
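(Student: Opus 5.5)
The plan is to reduce the statement to the regular value theorem, which is itself a consequence of the implicit function theorem. The reduction uses the fact that $\lap$ is a linear subspace of $\R^{\nu}$, so it is cut out globally by a linear submersion.

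First I will introduce the linear map $\pi:\R^{\nu}\to\R^{\nu-1}$ given by
\[
\pi(x_1,\dots,x_\nu) = (x_1 - x_\nu,\, x_2 - x_\nu,\, \dots,\, x_{\nu-1}-x_\nu).
\]
This map is surjective, and its kernel is exactly $\lap = \spa\{(1,\dots,1)\}$ from \eqref{diagonal}. Consequently
\[
\Phi^{-1}(\lap) = (\pi\circ\Phi)^{-1}(0) = \mathcal{E}_P\cap B_{\ep}(\wtil{\alpha}).
\]
The composite $F = \pi\circ\Phi : B_{\ep}(\wtil{\alpha})\to\R^{\nu-1}$ is smooth, because $\Phi$ is smooth (each $\lambda_1(G_k,\alpha)$ is simple) and $\pi$ is linear.

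Second, I will check that $0$ is a regular value of $F$. Take any $\alpha\in F^{-1}(0)$. By the chain rule $dF_\alpha = \pi\circ d\Phi_\alpha$, since $d\pi = \pi$. Transversality gives $\R^{\nu} = \lap + d\Phi_\alpha(\R^{|\p P|})$. Applying $\pi$ and using $\pi(\lap)=0$ yields
\[
\R^{\nu-1} = \pi(\R^{\nu}) = \pi\bigl(d\Phi_\alpha(\R^{|\p P|})\bigr) = dF_\alpha(\R^{|\p P|}).
\]
So $dF_\alpha$ is a surjective $(\nu-1)\times|\p P|$ matrix; in particular it has rank $\nu-1$, which forces $|\p P|\ge \nu-1$.

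Third, I will apply the implicit function theorem at each point $\alpha\in F^{-1}(0)$. After permuting coordinates, choose $\nu-1$ columns of $dF_\alpha$ forming an invertible minor. Near $\alpha$, the zero set $F^{-1}(0)$ is then the graph of a smooth function of the remaining $\eta = |\p P|-(\nu-1)$ coordinates. This gives smooth local charts, so $F^{-1}(0)$ is an embedded smooth submanifold of $\R^{|\p P|}$ of dimension $\eta$, i.e.\ of codimension $\nu-1$. If $F^{-1}(0)$ is empty the statement is vacuous.

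I do not expect a genuine obstacle. The only point needing care is the second step: transversality must be translated into surjectivity of $dF_\alpha$, and this uses exactly that $\ker\pi = \lap = T_\zeta\lap$ for every $\zeta$. For general (nonlinear) target submanifolds one would instead pick a local defining submersion near $\zeta$, but linearity of $\lap$ makes a single global $\pi$ suffice.
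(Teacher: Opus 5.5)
Your argument is correct. You compose $\Phi$ with the linear surjection $\pi$ whose kernel is $\lap$, so transversality becomes exactly surjectivity of $dF_\alpha = \pi\circ d\Phi_\alpha$ on $F^{-1}(0)$, and the regular value theorem then gives an embedded submanifold of dimension $|\p P|-(\nu-1)$.

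The paper does not prove this statement; it only cites Benedetti's general transversality theorem. The standard proof of that theorem is this same reduction: locally cut out the target submanifold by a submersion and apply the regular value theorem. Your version uses the linearity of $\lap$ to take a single global defining map, so it is the same approach made self-contained.
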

Note that $\eta = |\p P| - (\nu -1)$ is the first Betti number (or cyclomatic number) of the multigraph $P^{G}$ defined in Section \ref{part_lap_subsection} and is always nonnegative \cite{hatcher2002algebraic}. Crucially, $\eta$ is also the number of independent cycles in $P^G$, or the minimum number of edges to remove from $P^G$ to form a tree graph.

\begin{figure}
    \centering
    \begin{tikzpicture}[x=0.75pt,y=0.75pt,yscale=-1,xscale=1]
    
    \draw [color={rgb, 255:red, 74; green, 144; blue, 226 }  ,draw opacity=1 ][line width=1.5]    (164.2,139.3) -- (173.75,98.5) ;
    \draw [color={rgb, 255:red, 245; green, 166; blue, 35 }  ,draw opacity=1 ][line width=1.5]    (130,155.5) .. controls (163.25,157.5) and (161.75,120.5) .. (205.25,130.5) ;
    \draw  [dash pattern={on 1.69pt off 2.76pt}][line width=1.5]  (125,137.3) .. controls (125,115.1) and (143,97.1) .. (165.2,97.1) .. controls (187.4,97.1) and (205.4,115.1) .. (205.4,137.3) .. controls (205.4,159.5) and (187.4,177.5) .. (165.2,177.5) .. controls (143,177.5) and (125,159.5) .. (125,137.3) -- cycle ;
    \draw [line width=1.5]  (40,231.3) -- (232,231.3)(59.2,58.5) -- (59.2,250.5) (225,226.3) -- (232,231.3) -- (225,236.3) (54.2,65.5) -- (59.2,58.5) -- (64.2,65.5)  ;
    \draw  [color={rgb, 255:red, 0; green, 0; blue, 0 }  ,draw opacity=1 ][fill={rgb, 255:red, 0; green, 0; blue, 0 }  ,fill opacity=1 ][line width=1.5]  (160.95,139.3) .. controls (160.95,137.51) and (162.41,136.05) .. (164.2,136.05) .. controls (165.99,136.05) and (167.45,137.51) .. (167.45,139.3) .. controls (167.45,141.09) and (165.99,142.55) .. (164.2,142.55) .. controls (162.41,142.55) and (160.95,141.09) .. (160.95,139.3) -- cycle ;
    \draw    (183.98,136) -- (215.25,164) ;
    \draw [shift={(181.75,134)}, rotate = 41.85] [fill={rgb, 255:red, 0; green, 0; blue, 0 }  ][line width=0.08]  [draw opacity=0] (6.25,-3) -- (0,0) -- (6.25,3) -- cycle    ;
    \draw [line width=1.5]  (371.5,230.8) -- (563.5,230.8)(390.7,58) -- (390.7,250) (556.5,225.8) -- (563.5,230.8) -- (556.5,235.8) (385.7,65) -- (390.7,58) -- (395.7,65)  ;
    \draw [line width=1.5]    (377.25,243.5) -- (554.75,66) ;
    \draw    (257,88.5) .. controls (296.2,59.1) and (316.91,58.99) .. (354.67,86.76) ;
    \draw [shift={(357,88.5)}, rotate = 216.93] [fill={rgb, 255:red, 0; green, 0; blue, 0 }  ][line width=0.08]  [draw opacity=0] (8.93,-4.29) -- (0,0) -- (8.93,4.29) -- cycle    ;
    \draw [color={rgb, 255:red, 245; green, 166; blue, 35 }  ,draw opacity=1 ][line width=1.5]    (410.75,210) -- (486.25,134.5) ;
    \draw [shift={(486.25,134.5)}, rotate = 135] [color={rgb, 255:red, 245; green, 166; blue, 35 }  ,draw opacity=1 ][line width=1.5]      (6.71,-6.71) .. controls (3.01,-6.71) and (0,-3.7) .. (0,0) .. controls (0,3.7) and (3.01,6.71) .. (6.71,6.71) ;
    \draw [shift={(410.75,210)}, rotate = 315] [color={rgb, 255:red, 245; green, 166; blue, 35 }  ,draw opacity=1 ][line width=1.5]      (6.71,-6.71) .. controls (3.01,-6.71) and (0,-3.7) .. (0,0) .. controls (0,3.7) and (3.01,6.71) .. (6.71,6.71) ;
    \draw  [color={rgb, 255:red, 208; green, 2; blue, 27 }  ,draw opacity=1 ][fill={rgb, 255:red, 208; green, 2; blue, 27 }  ,fill opacity=1 ] (188.25,128.63) .. controls (188.25,127.31) and (189.31,126.25) .. (190.63,126.25) .. controls (191.94,126.25) and (193,127.31) .. (193,128.63) .. controls (193,129.94) and (191.94,131) .. (190.63,131) .. controls (189.31,131) and (188.25,129.94) .. (188.25,128.63) -- cycle ;
    \draw [color={rgb, 255:red, 208; green, 2; blue, 27 }  ,draw opacity=1 ]   (193,128.63) -- (224.25,128.51) ;
    \draw [shift={(227.25,128.5)}, rotate = 179.79] [fill={rgb, 255:red, 208; green, 2; blue, 27 }  ,fill opacity=1 ][line width=0.08]  [draw opacity=0] (5.36,-2.57) -- (0,0) -- (5.36,2.57) -- cycle    ;
    \draw [color={rgb, 255:red, 208; green, 2; blue, 27 }  ,draw opacity=1 ]   (190.63,128.63) -- (190.74,95.5) ;
    \draw [shift={(190.75,92.5)}, rotate = 90.2] [fill={rgb, 255:red, 208; green, 2; blue, 27 }  ,fill opacity=1 ][line width=0.08]  [draw opacity=0] (5.36,-2.57) -- (0,0) -- (5.36,2.57) -- cycle    ;
    \draw  [color={rgb, 255:red, 208; green, 2; blue, 27 }  ,draw opacity=1 ][fill={rgb, 255:red, 208; green, 2; blue, 27 }  ,fill opacity=1 ] (461.75,156.63) .. controls (461.75,155.31) and (462.81,154.25) .. (464.13,154.25) .. controls (465.44,154.25) and (466.5,155.31) .. (466.5,156.63) .. controls (466.5,157.94) and (465.44,159) .. (464.13,159) .. controls (462.81,159) and (461.75,157.94) .. (461.75,156.63) -- cycle ;
    \draw [color={rgb, 255:red, 208; green, 2; blue, 27 }  ,draw opacity=1 ]   (464.13,156.63) -- (446.39,129.51) ;
    \draw [shift={(444.75,127)}, rotate = 56.81] [fill={rgb, 255:red, 208; green, 2; blue, 27 }  ,fill opacity=1 ][line width=0.08]  [draw opacity=0] (5.36,-2.57) -- (0,0) -- (5.36,2.57) -- cycle    ;
    
    \draw (217,237.9) node [anchor=north west][inner sep=0.75pt]    {$\alpha _{1}$};
    \draw (26,59.9) node [anchor=north west][inner sep=0.75pt]    {$\alpha _{2}$};
    \draw (159,142.9) node [anchor=north west][inner sep=0.75pt]  [color={rgb, 255:red, 0; green, 0; blue, 0 }  ,opacity=1 ]  {$\tilde{\alpha }$};
    \draw (153,104.4) node [anchor=north west][inner sep=0.75pt]  [color={rgb, 255:red, 74; green, 144; blue, 226 }  ,opacity=1 ]  {$\varepsilon $};
    \draw (216.5,162.9) node [anchor=north west][inner sep=0.75pt]  [color={rgb, 255:red, 245; green, 166; blue, 35 }  ,opacity=1 ]  {$\mathcal{E}_{P}$};
    \draw (124,35.9) node [anchor=north west][inner sep=0.75pt]    {$\mathbb{R}^{|\partial P|}$};
    \draw (299,47.4) node [anchor=north west][inner sep=0.75pt]    {$\Phi $};
    \draw (483,36.9) node [anchor=north west][inner sep=0.75pt]    {$\mathbb{R}^{\nu }$};
    \draw (542,87.9) node [anchor=north west][inner sep=0.75pt]    {$\Delta $};
    \draw (458.5,169.9) node [anchor=north west][inner sep=0.75pt]  [color={rgb, 255:red, 245; green, 166; blue, 35 }  ,opacity=1 ]  {$\Phi \left(\mathcal{E}_{P} \cap B_{\varepsilon }\left(\tilde{\alpha }\right)\right)$};
    \draw (222.5,127.9) node [anchor=north west][inner sep=0.75pt]  [color={rgb, 255:red, 208; green, 2; blue, 27 }  ,opacity=1 ]  {$v_{1}$};
    \draw (196.5,81.4) node [anchor=north west][inner sep=0.75pt]  [color={rgb, 255:red, 208; green, 2; blue, 27 }  ,opacity=1 ]  {$v_{2}$};
    \draw (412.5,105.9) node [anchor=north west][inner sep=0.75pt]  [color={rgb, 255:red, 208; green, 2; blue, 27 }  ,opacity=1 ]  {$d\Phi _{\alpha }( v_{2})$};
    \end{tikzpicture}
    \caption{A schematic of the transversality argument with $|\p P| = \nu = 2$. The matrix $d\Phi_{\alpha}$ may not have full rank at every $\alpha$, but the key is that the tangent space of $\Delta$ always complements the image of $d\Phi_{\alpha}$ to give all of $\R^{\nu}$.}
\end{figure}
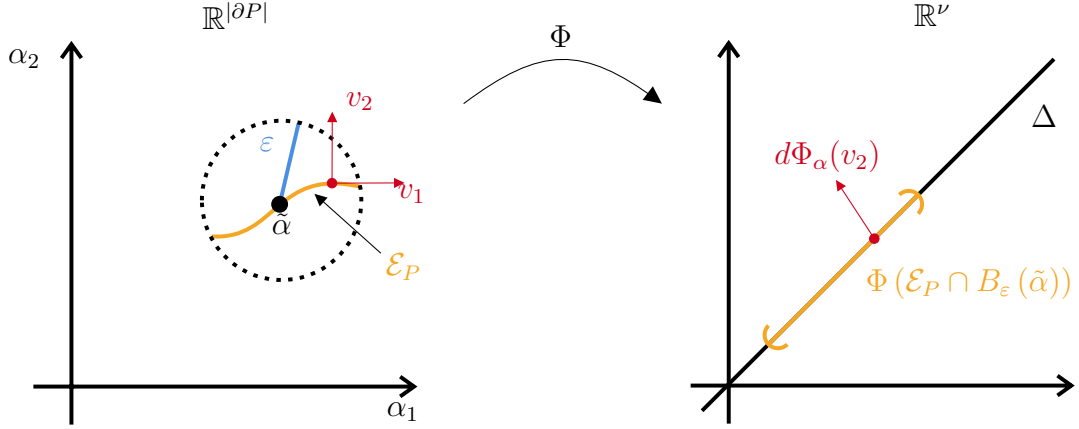

\begin{lemma}\label{phi_transverse}
    $\Phi:B_{\ep}(\wtil{\alpha}) \to \R^{\nu}$ as defined in \eqref{phi_map} is transversal to $\lap$.
\end{lemma}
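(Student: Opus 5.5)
We need to show that for every $\alpha\in\Phi^{-1}(\lap)$ we have $\R^{\nu} = \lap + d\Phi_\alpha(\R^{|\p P|})$. Since $\lap$ is one-dimensional, it suffices to show that the image of $d\Phi_\alpha$ has dimension at least $\nu-1$ and does not lie inside a hyperplane containing $\lap$. Equivalently, the image of $d\Phi_\alpha$ projects onto $\R^\nu/\lap$. We will compute $d\Phi_\alpha$ explicitly using first-order perturbation theory, exactly as in the proof of Theorem \ref{thm:crit_points_and_efuns}.

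\textbf{Computing the differential.} For $(i,j)\in\p P$ with $i\in G_k$ and $j\in G_\ell$ (so $k\neq\ell$), the parameter $\alpha_{ij}$ only enters the block $L^{\p P}(G_k,\alpha)$, through the potential $w_{ij}\alpha_{ij}$ at vertex $i$. It also enters $L^{\p P}(G_\ell,\alpha)$, through the potential $w_{ij}/\alpha_{ij}$ at vertex $j$. Hence the Hellmann--Feynman formula gives
\[
\pderiv{\lambda_1(G_k,\alpha)}{\alpha_{ij}} = w_{ij} f_i(G_k,\alpha)^2 >0, \qquad \pderiv{\lambda_1(G_\ell,\alpha)}{\alpha_{ij}} = -\frac{w_{ij}}{\alpha_{ij}^2} f_j(G_\ell,\alpha)^2<0,
\]
and all other entries of the column are zero. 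The strict signs use the positivity of the ground states $f(G_k,\alpha)$ and $\alpha_{ij}>0$. So the column of $d\Phi_\alpha$ indexed by $(i,j)$ is $a_{ij}e_k - b_{ij}e_\ell$ with $a_{ij},b_{ij}>0$. Thus $d\Phi_\alpha$ is a positively weighted, "twisted" incidence matrix of the multigraph $P^G$.

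\textbf{Spanning the complement of $\lap$.} Choose a spanning tree $T$ of $P^G$. This is possible since $G$, and hence $P^G$, is connected, and $T$ has $\nu-1$ edges. I claim the $\nu-1$ columns indexed by edges of $T$, together with $(1,\dots,1)$, span $\R^\nu$. Suppose $x\in\R^\nu$ annihilates all of them. Then for each tree edge $x_k a_{ij} = x_\ell b_{ij}$, so $x_\ell = (a_{ij}/b_{ij})x_k$ with a positive ratio. Propagating from a root along $T$ shows that all $x_k$ have the same sign as $x_{\text{root}}$. But also $\sum_k x_k=0$, which forces $x=0$. Hence the orthogonal complement of $\lap + d\Phi_\alpha(\R^{|\p P|})$ is trivial, and transversality holds.

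\textbf{Main obstacle.} The only delicate point is the sign bookkeeping, namely that within a single column the two nonzero entries have strictly opposite signs. This is exactly where the restriction to $\alpha\in\R^{|\p P|}_{>0}$ and the positivity of ground states (simplicity of $\lambda_1(G_k,\alpha)$, Biyikoglu et al.) are used. It also justifies differentiating $\lambda_1(G_k,\alpha)$. Multi-edges in $P^G$ cause no trouble, since we only use a spanning tree.
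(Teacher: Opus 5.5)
Your proof is correct and follows essentially the same route as the paper. Both compute the columns of $d\Phi_\alpha$ by first-order perturbation theory, observe that each column has two nonzero entries of strictly opposite sign, and use connectivity of $P^G$ to show that any vector orthogonal to the image has constant sign and so cannot also be orthogonal to $(1,\dots,1)$.

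The differences are cosmetic. You test the annihilator of $\lap + d\Phi_\alpha(\R^{|\p P|})$ directly, using only the columns of a spanning tree (which even shows those $\nu-1$ directions suffice). The paper instead first shows $\ker(d\Phi_\alpha^T)$ is at most one-dimensional. You also correctly keep the factor $w_{ij}$ in the derivatives, which the paper omits.
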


\begin{proof}
    Suppose that $\alpha \in \Phi^{-1}(\lap)$.
    We can consider $d\Phi_{\alpha}$ as a $\nu \times |\p P|$ matrix. Recall that 
    $\R^{\nu} = d\Phi_{\alpha}(\R^{|\p P|}) \oplus \ker(d\Phi_{\alpha}^T)$.
    Each row of $(d\Phi_{\alpha})^T$ has the form 
    \[\left(\pderiv{\lambda_1(G_1,\alpha)}{\alpha_{ij}},\dots,\pderiv{\lambda_1(G_{\nu},\alpha)}{\alpha_{ij}}\right) \in \R^{\nu},\]
    but only two of these components are nonzero since each $\alpha_{ij}$ only appears in the components containing vertices $i$ and $j$. Explicitly computing the derivatives as in the proof of Theorem \ref{thm:crit_points_and_efuns} gives
    \[\pderiv{\lambda_1(G_k,\alpha)}{\alpha_{ij}} = \begin{cases}
        |f_i(G_k,\alpha)|^2, & k = s(i) \\
        -\frac{1}{\alpha_{ij}^2}|f_j(G_k,\alpha)|^2, & k = s(j) \\
        0, & \text{else},
    \end{cases}\]
    where $s:V \to \{1,2,\dots,\nu\}$ maps a vertex $i$ to the index of the subdomain in which it lives. It follows then that $u \in \R^{\nu}$ is in the kernel of $(d\Phi_{\alpha})^T$ if and only if 
    \begin{equation}\label{kernel_criterion}
        |f_i(G_{s(i)},\alpha)|^2u_{s(i)} -\frac{1}{\alpha_{ij}^2}|f_j(G_{s(j)},\alpha)|^2u_{s(j)} = 0
    \end{equation}
    for all $(i,j) \in \p P$. Since the partition graph $P^G$ is connected, the kernel is at most one-dimensional. To be more explicit, if $u \in \ker (d\Phi_{\alpha}^T)$ with $u_{k} \neq 0$, \eqref{kernel_criterion} shows that $u_{\ell}$ is uniquely determined for all $\ell$ where there is some edge in $\p P$ connecting $G_{k}$ with $G_{\ell}$. Since the multigraph $P^G$ is connected, continuing this process iteratively determines all of $u$ uniquely.
    
    If $\ker(d\Phi_{\alpha}^T) = 0$, we are done since it implies the column space of $d\Phi_{\alpha}$ is all of $\R^{\nu}$. Suppose now that $\ker(d\Phi_{\alpha}^T)$ is spanned by some $u \in \R^{\nu}$. Then by \eqref{kernel_criterion} all components of $u$ must have the same sign. But then $u$ cannot be orthogonal to $(1,1,\dots,1) \in \R^{\nu}$, which spans $\lap$. Hence, $\lap$ is not contained in $d\Phi_{\alpha}(\R^{|\p P|})$, so we have 
    $\R^{\nu} = \lap + d\Phi_{\alpha}(\R^{|\p P|})$,
    as desired.
\end{proof}

We are now finally ready to state and prove our main result. There are two cases to consider: $\eta=0$ and $\eta>0$. When $\eta = 0$, $P$ is necessarily a tree partition. Berkolaiko, Raz and Smilansky showed that any tree equipartition is unique and must correspond to the nodal partition of a Courant-sharp Laplacian eigenvector \cite[Theorem 4.3]{berkolaikoCriticalPartitionsNodal2012}. Any tree partition is bipartite, so it must also be the nodal partition of a Courant-sharp eigenvector of the partition Laplacian $L^{\p P}$ (using the unitary equivalence described in Section \ref{part_lap_subsection}). We hence immediately have the following:
\begin{theorem}\label{tree_partition}
    Let $(P,\wtil{\alpha})$ be a $\nu$-equipartition with $\eta = |\p P| - (\nu -1 ) =0$. Then $(P,\wtil{\alpha})$ is the unique equipartition with respect to $\alpha$, and $P$ is the nodal partition of a Courant-sharp eigenvector of $L^{\p P}$ with eigenvalue $\Lambda(P,\wtil{\alpha})$.
\end{theorem}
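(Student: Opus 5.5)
Since $\eta = 0$, the multigraph $P^G$ is connected with $\nu - 1$ edges, so it is a tree and in particular bipartite. I would prove the statement directly, by constructing the eigenvector and then showing uniqueness by induction on the tree, rather than only citing \cite[Theorem 4.3]{berkolaikoCriticalPartitionsNodal2012}.

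\textbf{Existence of the eigenvector.} Let $\Lambda = \Lambda(P,\wtil{\alpha})$ and let $f^{(k)} = f(G_k,\wtil{\alpha})$ be the positive ground states. I would glue these into a single vector $\psi = \sum_k c_k f^{(k)}$ with constants $c_k>0$. The constants are chosen so that for every $(i,j)\in\p P$ with $i\in G_k$, $j \in G_\ell$,
\[
\wtil{\alpha}_{ij} = \frac{c_\ell f^{(\ell)}_j}{c_k f^{(k)}_i}.
\]
Because $P^G$ is a tree, these constraints can be solved consistently: fix $c_1 = 1$ and propagate along the unique path from $G_1$ to each other component. Once this holds, $\psi \in \ker B_{ij}(\wtil{\alpha}_{ij})$ for every boundary edge. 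Hence $L^{\p P}(G)\psi = L^{\p P}(P,\wtil{\alpha})\psi = \Lambda \psi$, using the equipartition property and the block-diagonal structure. Moreover $\psi>0$ everywhere, so by the remark at the end of Section~\ref{part_lap_subsection} its nodal set is exactly $\p P$, and $\nu(\psi) = \nu$.

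\textbf{Courant-sharpness.} I need to show that $\Lambda = \lambda_\nu(L^{\p P})$ and that it is simple. I would add the boundary edges back one at a time, tracking the index using Corollary~\ref{coro:spectral_shift}. The operator $L^{\p P}(P,\wtil{\alpha})$ has $\Lambda$ as an eigenvalue of multiplicity $\nu$, since it is the ground state of every block. A single eigenvalue of multiplicity $\nu$ falls outside the simple-eigenvalue hypothesis of that corollary, so instead I would use interlacing directly.

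Each $w_{ij}B_{ij}(\wtil{\alpha}_{ij})$ is positive semidefinite of rank one. Subtracting the $\nu-1$ of them therefore lowers the eigenvalues, with $\lambda_m(G) \ge \lambda_{m-(\nu-1)}(P,\wtil{\alpha})$ and $\lambda_m(G)\le \lambda_m(P,\wtil{\alpha})$. All eigenvalues of $L^{\p P}(P,\wtil{\alpha})$ below $\Lambda$ are absent, because $\Lambda$ is the minimal eigenvalue of every block. This places $\Lambda$ at index $\le \nu$ in the spectrum of $L^{\p P}(G)$.

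Theorem~\ref{mohammadian} gives $\nu(\psi) \le n$, so $n \ge \nu$, and therefore $n = \nu$. The remaining issue is simplicity, with the index counted as the first occurrence of the eigenvalue. The cleanest route is the unitary equivalence: $P$ is bipartite, so $L^{\p P}$ is switched to $L$ and $\psi$ becomes a Laplacian eigenvector with $\nu$ nodal domains. The bipartite argument of \cite{berkolaikoCriticalPartitionsNodal2012} then applies.

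\textbf{Uniqueness of the equipartition.} Suppose $\alpha, \alpha' \in \mathcal{E}_P$. Take a leaf component $G_k$ of the tree $P^G$, attached to the rest by a single edge $(i,j)$. Since $\lambda_1(G_k,\cdot)$ depends only on $\alpha_{ij}$ and is strictly monotone in it (its derivative is $|f_i|^2$ or $-|f_j|^2/\alpha^2$), the common energy $\Lambda$ determines $\alpha_{ij}$. Removing leaves inductively shows that $\Lambda$ determines all of $\alpha$. Conversely, each candidate energy yields an eigenvector of $L^{\p P}$ that is positive everywhere and of index $\nu$ by the argument above. Since $\lambda_\nu$ is a single number, $\Lambda$ is fixed, and hence so is $\alpha$.

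\textbf{Main obstacle.} I expect the main obstacle to be the index and simplicity bookkeeping in the Courant-sharpness step, where the multiplicity of $\Lambda$ in the decoupled operator prevents a direct use of Corollary~\ref{coro:spectral_shift}. At that point I would fall back on the Laplacian via switching and on the cited tree result.
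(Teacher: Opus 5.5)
Your proposal is correct, and it takes a more hands-on route than the paper. The paper gives no argument of its own here. It notes that a tree partition is bipartite, quotes \cite[Theorem 4.3]{berkolaikoCriticalPartitionsNodal2012} (uniqueness of tree equipartitions and Courant-sharpness for the ordinary Laplacian $L$), and transports the conclusion to $L^{\p P}$ via the switching unitary. You instead do three things directly. You build $\psi$ by propagating the constants $c_k$ along the tree $P^G$. You locate $\Lambda$ in the spectrum of $L^{\p P}(G)$ by interlacing against the $\nu-1$ rank-one terms $w_{ij}B_{ij}(\wtil{\alpha}_{ij})$. You prove uniqueness by peeling leaves of $P^G$, using strict monotonicity of $\lambda_1(G_k,\cdot)$ in its single free parameter. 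Only simplicity is delegated to the same citation. The paper's route is a two-line reduction but rests entirely on the external result. Yours shows exactly where $\eta=0$ enters (consistency of the gluing constants, and there being only $\nu-1$ rank-one perturbations), and it works natively with $L^{\p P}$.

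Two refinements would tighten it.

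First, your interlacing inequalities with $m=\nu$ give $\Lambda=\lambda_1(P,\wtil{\alpha})\le\lambda_\nu(L^{\p P}(G))\le\lambda_\nu(P,\wtil{\alpha})=\Lambda$. So $\lambda_\nu(L^{\p P}(G))=\Lambda$ exactly for every equipartition, not merely ``index $\le\nu$''. This equality is what your uniqueness step actually needs. It also makes the appeal to Theorem~\ref{mohammadian} unnecessary, which matters because that theorem presupposes non-degeneracy, and you had not yet established that at that point.

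Second, simplicity can be proved directly, making the whole argument citation-free. Write $w_e B_e(\wtil{\alpha}_e)=v_e v_e^{T}$. An eigenvector $u$ of $L^{\p P}(G)$ with eigenvalue $\Lambda$ then satisfies $(L^{\p P}(P,\wtil{\alpha})-\Lambda)u=\sum_{e}\innerp{v_e}{u}v_e$. Pairing with each $f^{(k)}$ gives $\sum_e \innerp{v_e}{u}\innerp{f^{(k)}}{v_e}=0$. The $\nu\times(\nu-1)$ matrix $\bigl(\innerp{f^{(k)}}{v_e}\bigr)$ has nonzero entries exactly at the two endpoints of each edge of the tree $P^G$, so it has full column rank. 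Hence $\innerp{v_e}{u}=0$ for all $e$, which gives $u\in\spa\{f^{(k)}\}$. These same conditions, propagated along the tree, force $u\propto\psi$.
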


The main new development here is that we can use the partition Laplacian to remove the bipartite assumption in \cite[Theorem 4.4]{berkolaikoCriticalPartitionsNodal2012}.

\begin{theorem}\label{thm:main_theorem}
    Let $(P,\wtil{\alpha})$ be a $\nu$-equipartition with $\wtil{\alpha}_e > 0$ for all $e \in \p P$ and suppose $\eta = |\p P| - (\nu-1)>0$. Then $\mathcal{E}_P$ forms a smooth $\eta$-dimensional submanifold of $\R^{|\p P|}$ in a neighborhood of $\wtil{\alpha}$. Furthermore, $\alpha$ is a critical point of $\Lambda$ on $\mathcal{E}_P$ if and only if it corresponds to a non-degenerate eigenvector $\psi$ of $L^{\p P}(G)$ with nodal partition $P$ and eigenvalue $\Lambda(P,\alpha)$. Moreover, 
    \[\text{Mor}(\Lambda(P,\alpha)) = \delta(\psi),\]
    where Mor$(\Lambda(P,\alpha))$ is the Morse index of the critical point, and $\delta(\psi)$ is the nodal deficiency of $\psi$ as an eigenvector of $L^{\p P}(G)$.
\end{theorem}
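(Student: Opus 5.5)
The manifold statement follows at once from Lemma \ref{phi_transverse} and Theorem \ref{transvers_thm}: $\Phi$ is transversal to $\lap$ on $B_{\ep}(\wtil{\alpha})$, so $\Phi^{-1}(\lap)=\mathcal{E}_P\cap B_\ep(\wtil\alpha)$ is a smooth submanifold of dimension $\eta$.

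For the criticality statement I will use Lagrange multipliers. On $\mathcal{E}_P$ we have $\Lambda(P,\alpha)=\lambda_1(G_k,\alpha)$ for every $k$, so $\Lambda|_{\mathcal{E}_P}$ is the restriction of the smooth function $\lambda_1(G_1,\alpha)$. A point $\alpha$ is therefore critical if and only if there are weights $c_k$ with $\sum_k c_k=1$ such that $\sum_k c_k\,\nabla_\alpha\lambda_1(G_k,\alpha)=0$, that is, $c\in\ker(d\Phi_\alpha^T)$ with $c\cdot(1,\dots,1)=1$. By the proof of Lemma \ref{phi_transverse}, such a nonzero kernel vector exists exactly when the system \eqref{kernel_criterion} has a nonzero solution, and then all $c_k>0$. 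Set $c_k=a_k^2$ and $\psi=\sum_k a_k f(G_k,\alpha)$. Condition \eqref{kernel_criterion} then reads $a_{s(i)}^2f_i^2=\alpha_{ij}^{-2}a_{s(j)}^2f_j^2$, and since everything is positive this is equivalent to $\alpha_{ij}=\psi_j/\psi_i$. This in turn says $\psi\in\ker B_{ij}(\alpha_{ij})$ for all $(i,j)\in\p P$. Because $\psi$ is an eigenvector of the block operator $L^{\p P}(P,\alpha)$ with eigenvalue $\Lambda$ (the blocks share a common ground energy), it is also an eigenvector of $L^{\p P}(G)$. It is positive, so its nodal partition is $P$. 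Conversely, a non-degenerate eigenvector with nodal partition $P$ gives $\alpha$ by Lemma \ref{lem:evec_induces_equipartition}, and reversing the computation produces the kernel vector. Simplicity of the eigenvalue I would obtain, as in Theorem \ref{thm:crit_points_and_efuns}, from the choice of $\ep$ and from genericity, or I would include it as part of the correspondence.

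The Morse index is the main obstacle. My plan is to follow \cite{berkolaikoStabilityNodalStructures2012}, reducing to a single-edge situation by choosing a spanning tree of $P^G$. Let $T\subset\p P$ be the $\eta$ edges outside a spanning tree. The parameters $\alpha_e$ for $e\in T$ serve as local coordinates on $\mathcal{E}_P$: the tree parameters are then fixed by the equipartition constraint via the implicit function theorem, as in the tree case of Theorem \ref{tree_partition}. Next I will consider the intermediate operator in which only the tree edges $\p P\setminus T$ are cut with their critical parameters, and the edges in $T$ are perturbed by $B_e(\alpha_e)$. Its spectrum restricted to $\mathcal{E}_P$ should coincide locally with $\Lambda$. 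Applying Corollary \ref{coro:spectral_shift} edge by edge, and using that each $\alpha^c_e>0$, shows that each maximizing direction shifts the eigenvalue index by $+1$ and each minimizing direction by $0$. Summing these, the total index shift from $L^{\p P}(P,\alpha)$, where $\Lambda$ is the $\nu$th eigenvalue, to $L^{\p P}(G)$, where it is the $n$th, equals the Morse index, provided the Hessian is computed in the multi-parameter version. This part is the main difficulty. Here I would use the interlacing argument: $\sum_e B_e$ with all $\alpha_e>0$ is positive semidefinite of rank $|\p P|$, and I would work with its restriction to the $\eta$-dimensional directions. On the tree part, the first $\nu$ eigenvalues of the cut operator match, so the index there is exactly $\nu$. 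This gives $n=\nu+\mathrm{Mor}$, that is, $\mathrm{Mor}=n-\nu(\psi)=\delta(\psi)$.
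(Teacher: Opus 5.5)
Your first two parts are correct and match the paper. Transversality gives the $\eta$-dimensional manifold. The Lagrange-multiplier argument (a vector $c\in\ker(d\Phi_\alpha^T)$, all $c_k>0$, and $\psi=\sum_k\sqrt{c_k}\,f(G_k,\alpha)$) is exactly the paper's proof of the correspondence.

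The Morse index part has a genuine gap, and it comes from cutting the wrong edges. You cut the spanning-tree edges $\p P\setminus T$ at frozen critical values and vary only $\alpha_e$, $e\in T$. But by your own implicit-function-theorem remark, on $\mathcal{E}_P$ the tree parameters are functions of $\alpha_T$. So this family leaves $\mathcal{E}_P$ and does not represent $\Lambda|_{\mathcal{E}_P}$.

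There is a second problem. Once all of $\p P$ is cut, the operator is block diagonal and $\Lambda$ is an eigenvalue of multiplicity $\nu$. There is then no simple eigenvalue of index $\nu$ to start from, and Corollary \ref{coro:spectral_shift}, which requires simplicity, does not apply. You would also have to add back the $\nu-1$ tree edges, with unknown effect on the index. Your sentence ``the first $\nu$ eigenvalues of the cut operator match, so the index there is exactly $\nu$'' is precisely the claim that needs proof. The interlacing you propose for a positive semidefinite finite-rank perturbation only gives two-sided bounds on $n-\nu$, not equality with the Morse index.

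The missing idea is the paper's reversal. Keep the spanning-tree boundary edges in the graph, with signature $-1$, and cut only the $\eta$ non-tree edges $R=T$, setting $\xi_e=\alpha^c_e$. Since $\psi\in\ker B_e(\xi_e)$ for $e\in R$, $\psi$ remains an eigenvector of $L^{\p P}(G_\xi)$ with nodal partition $P$, and now $P$ is a \emph{tree} partition of $G_\xi$. By Lemma \ref{lem:evec_induces_equipartition}, $\psi$ induces a tree equipartition. The uniqueness in Theorem \ref{tree_partition} then forces $\psi$ to be Courant-sharp, i.e.\ the simple $\nu$th eigenvector of $L^{\p P}(G_\xi)$.

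This anchors the count at $\nu$. You then add back only the $\eta$ edges of $R$, each with $\alpha^c_e>0$. By \eqref{spectral_shift_formula} each contributes $M_e\in\{0,1\}$, giving $n=\nu+\sum_{e\in R}M_e$, which the paper identifies with the Morse index. The paper states that last identification briefly. The same tree-uniqueness argument at nearby $\xi$ shows that, in the coordinates $\xi=\alpha_R$ on $\mathcal{E}_P$, $\Lambda$ coincides with $\lambda_\nu(L^{\p P}(G_\xi))$. That is the multi-parameter function whose Hessian has to be related to the spectral shift.
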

In other words, the stability of any critical point of $\Lambda$ is determined entirely by the nodal deficiency of the corresponding non-degenerate eigenvector of $L^{\p P}(G)$. In particular, this tells us that an equipartition is locally minimal if and only if it is the nodal partition of a Courant-sharp eigenvector of $L^{\p P}(G)$. 

\begin{proof}
    Since $\Phi:B_{\ep}(\wtil{\alpha}) \to \R^{\nu}$ is transversal to the diagonal $\lap$, we have that $\Phi^{-1}(\lap)$ is a smooth submanifold of $B_{\ep}(\wtil{\alpha})$ of dimension $|\p P| - (\nu -1)$, which is strictly positive by assumption. Recall that $\Phi^{-1}(\lap) = \mathcal{E}_P \cap B_{\ep}(\wtil{\alpha})$ and we find that $\mathcal{E}_P$ is a smooth submanifold of $\R^{|\p P|}$ in a neighborhood of $\wtil{\alpha}$.

    Suppose we have some critical point $\alpha^c$ of the map $\alpha \mapsto \Lambda(P,\alpha)$ defined in a neighborhood of $\wtil{\alpha}$ in $\mathcal{E}_P$. We will use Lagrange multipliers, where we consider critical points of the map $\alpha \mapsto \lambda_1(G_1,\alpha)$ with the $\nu-1$ constraints
    \[\lambda_1(G_1,\alpha) = \cdots = \lambda_1(G_{\nu},\alpha).\]
    Let $\{\zeta_k\}_{k=2}^{\nu}$ denote our Lagrange multipliers and define our Lagrangian function by
    \[\mathcal{L}(\alpha,\zeta) = \lambda_1(G_1,\alpha) + \sum_{k=2}^{\nu}\zeta_k(\lambda_1(G_k,\alpha) - \lambda_1(G_1,\alpha)).\]
    Alternatively we can collect all eigenvalues corresponding to the same domain and write 
    \begin{equation}\label{lagrangian_function}
        \mathcal{L}(\alpha,\zeta) = \sum_{k=1}^{\nu}c_k\lambda_{1}(G_k,\alpha)
    \end{equation}
    where each $c_k$ is some linear function of $\zeta$. To be explicit, $c_1 = 1 - \zeta_2 - \dots -\zeta_{\nu}$ and $c_k = \zeta_k$ for $k\geq 2$. Note that $\sum_{k}c_k = 1$. 
    
    For each $(i,j) \in \p P$, the only two terms of \eqref{lagrangian_function} that depend on $\alpha_{ij}$ are the eigenvalues corresponding to the respective subdomains of $P$ to which $i$ and $j$ belong. Hence, if we take the derivative in $\alpha_{ij}$ and set equal to zero, we obtain 
    \begin{equation}
        c_{s(i)}\pderiv{\lambda_1(G_{s(i)},\alpha)}{\alpha_{ij}} + c_{s(j)}\pderiv{\lambda_1(G_{s(j)},\alpha)}{\alpha_{ij}}=0,
    \end{equation}
    where $s$ maps any vertex $i$ to the index of the subdomain in which it lives. If $f(G_k,\alpha)$ is the ground state eigenvector on $G_k$, using first order perturbation theory like in the proof of Theorem \ref{thm:crit_points_and_efuns}, we find that our critical point $\alpha^c$ must satisfy
    \begin{equation}\label{crit_points_must_satisfy}
        c_{s(i)}|f_{i}(G_{s(i)},\alpha^c)|^2 - \frac{c_{s(j)}}{(\alpha^c_{ij})^2}|f_{j}(G_{s(j)},\alpha^c)|^2 = 0.
    \end{equation}
    This immediately implies that $c_k \neq 0$ for all $k$ since our graph is connected, so $c_k>0$ for all $k$.

    We define 
    \begin{equation}
        \psi = \sum_{k=1}^{\nu}\sqrt{c_k}f(G_k,\alpha^c)
    \end{equation}
    where we consider the ground states on each $G_k$ to be extended by zero to the rest of $G$. Clearly $\psi$ is an eigenvector of $L^{\p P}(P,\alpha^c)$ with eigenvalue $\Lambda(P,\alpha^c)$, and we note that $\psi_i = \sqrt{c_{\chi(i)}}f_i(G_{s(i)},\alpha^c)$.
    Plugging $\psi$ into \eqref{crit_points_must_satisfy} gives 
    \begin{equation}\label{psi_must_satisfy}
        \psi_i^2 - \frac{1}{(\alpha^c_{ij})^2}\psi_j^2 = 0,
    \end{equation}
    or, alternatively, that $\alpha_{ij}^c = \frac{\psi_j}{\psi_i} >0$.
    Since \eqref{psi_must_satisfy} implies that $\psi \in \ker(B_{ij}(\alpha_{ij}^c))$ for all $(i,j) \in \p P$, it follows that $\psi$ must also be an eigenvector of $L^{\p P}(G)$ with eigenvalue $\Lambda(P,\alpha^c)$. In addition, $\psi$ is strictly positive and hence must have nodal partition $P$.

    For the other direction, assume that $\psi$ is a non-degenerate eigenvector of $L^{\p P}(G)$ with nodal partition $P$. Then define 
    $\alpha_{ij} = \psi_j/\psi_i$ 
    for each $(i,j) \in \p P$ and $c_k = \sum_{i \in V_k}\psi_i^2$.
    Let
    \[f(G_k,\alpha) = \frac{\psi|_{G_k}}{\sqrt{c_k}}\]
    and a simple computation shows that $f(G_k,\alpha)$ is a normalized eigenvector of $L^{\p P}(G_k,\alpha)$. Since $f(G_k,\alpha)$ has a constant sign, it must correspond to the first eigenvector. Plugging in $f(G_k,\alpha)$ to \eqref{crit_points_must_satisfy} shows that $\alpha$ is a critical point of $\Lambda$ on $\mathcal{E}_P$.

    We now discuss the stability of these critical points. Let $(P,\alpha^c)$ be a critical equipartition.
    Beginning with our base graph $G$, we remove $\eta >0$ edges from $\p P$, introducing a new potential parameter $\xi_{ij}$ in the usual way for each edge removed. Since $\eta$ is the first Betti number of $P^{G}$, we can choose these edges to remove so that $P$ is now a tree partition of the resultant graph $G_{\xi}$. Let $R \sub \p P$ denote the edges removed in this construction. 

    Let $\psi$ be the eigenvector of $L^{\p P}(G)$ associated to $(P,\alpha^c)$ and set $\xi_e = \alpha^c_e$ for all $e \in R$. Since $\psi \in \ker(B_{ij}(\xi_{ij}))$ for all $(i,j) \in R$, $\psi$ is an eigenvector of $L^{\p P}(G_{\xi})$ with eigenvalue $\Lambda(P,\alpha^c)$ and nodal partition $P$. Thus by Lemma \ref{lem:evec_induces_equipartition}, $\psi$ induces a tree equipartition of $G_{\xi}$. Tree equipartitions are unique by Theorem \ref{tree_partition}, so $\psi$ is a Courant-sharp eigenvector of $L^{\p P}(G_{\xi})$. 

    \begin{figure}
        \begin{subfigure}[c]{0.4\textwidth}
            \centering
            \begin{tikzpicture}[x=0.75pt,y=0.75pt,yscale=-1,xscale=1]
        
        \draw [color={rgb, 255:red, 0; green, 0; blue, 0 }  ,draw opacity=1 ][line width=1.5]    (233.61,140.63) -- (248.43,197.35) ;
        \draw [color={rgb, 255:red, 0; green, 0; blue, 0 }  ,draw opacity=1 ][line width=1.5]    (293.9,165.67) -- (252,198.88) ;
        \draw [color={rgb, 255:red, 0; green, 0; blue, 0 }  ,draw opacity=1 ][line width=1.5]    (294.92,160.56) -- (237.7,136.54) ;
        \draw  [line width=1.5]  (212.15,113.55) .. controls (212.15,111.29) and (213.98,109.46) .. (216.24,109.46) .. controls (218.49,109.46) and (220.32,111.29) .. (220.32,113.55) .. controls (220.32,115.81) and (218.49,117.64) .. (216.24,117.64) .. controls (213.98,117.64) and (212.15,115.81) .. (212.15,113.55) -- cycle ;
        \draw  [line width=1.5]  (247.92,111) .. controls (247.92,108.74) and (249.75,106.91) .. (252,106.91) .. controls (254.26,106.91) and (256.09,108.74) .. (256.09,111) .. controls (256.09,113.25) and (254.26,115.08) .. (252,115.08) .. controls (249.75,115.08) and (247.92,113.25) .. (247.92,111) -- cycle ;
        \draw  [line width=1.5]  (229.52,136.54) .. controls (229.52,134.29) and (231.35,132.46) .. (233.61,132.46) .. controls (235.87,132.46) and (237.7,134.29) .. (237.7,136.54) .. controls (237.7,138.8) and (235.87,140.63) .. (233.61,140.63) .. controls (231.35,140.63) and (229.52,138.8) .. (229.52,136.54) -- cycle ;
        \draw  [line width=1.5]  (244.34,201.44) .. controls (244.34,199.18) and (246.17,197.35) .. (248.43,197.35) .. controls (250.68,197.35) and (252.51,199.18) .. (252.51,201.44) .. controls (252.51,203.69) and (250.68,205.52) .. (248.43,205.52) .. controls (246.17,205.52) and (244.34,203.69) .. (244.34,201.44) -- cycle ;
        \draw  [line width=1.5]  (223.9,231.58) .. controls (223.9,229.33) and (225.73,227.5) .. (227.99,227.5) .. controls (230.24,227.5) and (232.08,229.33) .. (232.08,231.58) .. controls (232.08,233.84) and (230.24,235.67) .. (227.99,235.67) .. controls (225.73,235.67) and (223.9,233.84) .. (223.9,231.58) -- cycle ;
        \draw  [line width=1.5]  (267.33,228.52) .. controls (267.33,226.26) and (269.16,224.43) .. (271.42,224.43) .. controls (273.68,224.43) and (275.51,226.26) .. (275.51,228.52) .. controls (275.51,230.78) and (273.68,232.61) .. (271.42,232.61) .. controls (269.16,232.61) and (267.33,230.78) .. (267.33,228.52) -- cycle ;
        \draw  [line width=1.5]  (293.39,163.11) .. controls (293.39,160.86) and (295.22,159.03) .. (297.48,159.03) .. controls (299.74,159.03) and (301.57,160.86) .. (301.57,163.11) .. controls (301.57,165.37) and (299.74,167.2) .. (297.48,167.2) .. controls (295.22,167.2) and (293.39,165.37) .. (293.39,163.11) -- cycle ;
        \draw  [line width=1.5]  (314.34,131.95) .. controls (314.34,129.69) and (316.17,127.86) .. (318.43,127.86) .. controls (320.69,127.86) and (322.52,129.69) .. (322.52,131.95) .. controls (322.52,134.2) and (320.69,136.03) .. (318.43,136.03) .. controls (316.17,136.03) and (314.34,134.2) .. (314.34,131.95) -- cycle ;
        \draw  [line width=1.5]  (332.22,163.11) .. controls (332.22,160.86) and (334.05,159.03) .. (336.31,159.03) .. controls (338.57,159.03) and (340.4,160.86) .. (340.4,163.11) .. controls (340.4,165.37) and (338.57,167.2) .. (336.31,167.2) .. controls (334.05,167.2) and (332.22,165.37) .. (332.22,163.11) -- cycle ;
        \draw [line width=1.5]    (251.49,203.99) -- (268.86,224.94) ;
        \draw [line width=1.5]    (245.87,205.01) -- (227.99,227.5) ;
        \draw [line width=1.5]    (267.33,228.52) -- (232.08,231.58) ;
        \draw [line width=1.5]    (219.3,116.11) -- (231.05,133.99) ;
        \draw [line width=1.5]    (220.32,113.55) -- (247.92,111) ;
        \draw [line width=1.5]    (249.45,113.55) -- (236.67,133.99) ;
        \draw [line width=1.5]    (315.87,134.5) -- (297.48,159.03) ;
        \draw [line width=1.5]    (332.22,163.11) -- (301.06,162.09) ;
        \draw [line width=1.5]    (320.98,135.01) -- (336.31,159.03) ;
        \draw  [color={rgb, 255:red, 74; green, 144; blue, 226 }  ,draw opacity=1 ][dash pattern={on 1.69pt off 2.76pt}][line width=1.5]  (278.32,150.9) .. controls (278.32,135.32) and (295.6,122.69) .. (316.91,122.69) .. controls (338.22,122.69) and (355.5,135.32) .. (355.5,150.9) .. controls (355.5,166.48) and (338.22,179.12) .. (316.91,179.12) .. controls (295.6,179.12) and (278.32,166.48) .. (278.32,150.9) -- cycle ;
        \draw  [color={rgb, 255:red, 74; green, 144; blue, 226 }  ,draw opacity=1 ][dash pattern={on 1.69pt off 2.76pt}][line width=1.5]  (195.85,121.28) .. controls (195.85,105.7) and (213.13,93.07) .. (234.44,93.07) .. controls (255.75,93.07) and (273.03,105.7) .. (273.03,121.28) .. controls (273.03,136.87) and (255.75,149.5) .. (234.44,149.5) .. controls (213.13,149.5) and (195.85,136.87) .. (195.85,121.28) -- cycle ;
        \draw  [color={rgb, 255:red, 74; green, 144; blue, 226 }  ,draw opacity=1 ][dash pattern={on 1.69pt off 2.76pt}][line width=1.5]  (208.63,218.85) .. controls (208.63,203.27) and (225.91,190.64) .. (247.22,190.64) .. controls (268.53,190.64) and (285.81,203.27) .. (285.81,218.85) .. controls (285.81,234.43) and (268.53,247.07) .. (247.22,247.07) .. controls (225.91,247.07) and (208.63,234.43) .. (208.63,218.85) -- cycle ;
        
        \draw (296.4,191) node [anchor=north west][inner sep=0.75pt]    {$G$};
        \end{tikzpicture}
        \end{subfigure}
        \begin{subfigure}[c]{0.4\textwidth}
           \centering
           \begin{tikzpicture}[x=0.75pt,y=0.75pt,yscale=-1,xscale=1]

\draw [color={rgb, 255:red, 0; green, 0; blue, 0 }  ,draw opacity=1 ][line width=1.5]    (253.61,160.63) -- (268.43,217.35) ;
\draw [color={rgb, 255:red, 0; green, 0; blue, 0 }  ,draw opacity=1 ][line width=1.5]    (314.92,180.56) -- (257.7,156.54) ;
\draw  [line width=1.5]  (232.15,133.55) .. controls (232.15,131.29) and (233.98,129.46) .. (236.24,129.46) .. controls (238.49,129.46) and (240.32,131.29) .. (240.32,133.55) .. controls (240.32,135.81) and (238.49,137.64) .. (236.24,137.64) .. controls (233.98,137.64) and (232.15,135.81) .. (232.15,133.55) -- cycle ;
\draw  [line width=1.5]  (267.92,131) .. controls (267.92,128.74) and (269.75,126.91) .. (272,126.91) .. controls (274.26,126.91) and (276.09,128.74) .. (276.09,131) .. controls (276.09,133.25) and (274.26,135.08) .. (272,135.08) .. controls (269.75,135.08) and (267.92,133.25) .. (267.92,131) -- cycle ;
\draw  [line width=1.5]  (249.52,156.54) .. controls (249.52,154.29) and (251.35,152.46) .. (253.61,152.46) .. controls (255.87,152.46) and (257.7,154.29) .. (257.7,156.54) .. controls (257.7,158.8) and (255.87,160.63) .. (253.61,160.63) .. controls (251.35,160.63) and (249.52,158.8) .. (249.52,156.54) -- cycle ;
\draw  [line width=1.5]  (264.34,221.44) .. controls (264.34,219.18) and (266.17,217.35) .. (268.43,217.35) .. controls (270.68,217.35) and (272.51,219.18) .. (272.51,221.44) .. controls (272.51,223.69) and (270.68,225.52) .. (268.43,225.52) .. controls (266.17,225.52) and (264.34,223.69) .. (264.34,221.44) -- cycle ;
\draw  [line width=1.5]  (243.9,251.58) .. controls (243.9,249.33) and (245.73,247.5) .. (247.99,247.5) .. controls (250.24,247.5) and (252.08,249.33) .. (252.08,251.58) .. controls (252.08,253.84) and (250.24,255.67) .. (247.99,255.67) .. controls (245.73,255.67) and (243.9,253.84) .. (243.9,251.58) -- cycle ;
\draw  [line width=1.5]  (287.33,248.52) .. controls (287.33,246.26) and (289.16,244.43) .. (291.42,244.43) .. controls (293.68,244.43) and (295.51,246.26) .. (295.51,248.52) .. controls (295.51,250.78) and (293.68,252.61) .. (291.42,252.61) .. controls (289.16,252.61) and (287.33,250.78) .. (287.33,248.52) -- cycle ;
\draw  [line width=1.5]  (313.39,183.11) .. controls (313.39,180.86) and (315.22,179.03) .. (317.48,179.03) .. controls (319.74,179.03) and (321.57,180.86) .. (321.57,183.11) .. controls (321.57,185.37) and (319.74,187.2) .. (317.48,187.2) .. controls (315.22,187.2) and (313.39,185.37) .. (313.39,183.11) -- cycle ;
\draw  [line width=1.5]  (334.34,151.95) .. controls (334.34,149.69) and (336.17,147.86) .. (338.43,147.86) .. controls (340.69,147.86) and (342.52,149.69) .. (342.52,151.95) .. controls (342.52,154.2) and (340.69,156.03) .. (338.43,156.03) .. controls (336.17,156.03) and (334.34,154.2) .. (334.34,151.95) -- cycle ;
\draw  [line width=1.5]  (352.22,183.11) .. controls (352.22,180.86) and (354.05,179.03) .. (356.31,179.03) .. controls (358.57,179.03) and (360.4,180.86) .. (360.4,183.11) .. controls (360.4,185.37) and (358.57,187.2) .. (356.31,187.2) .. controls (354.05,187.2) and (352.22,185.37) .. (352.22,183.11) -- cycle ;
\draw [line width=1.5]    (271.49,223.99) -- (288.86,244.94) ;
\draw [line width=1.5]    (265.87,225.01) -- (247.99,247.5) ;
\draw [line width=1.5]    (287.33,248.52) -- (252.08,251.58) ;
\draw [line width=1.5]    (239.3,136.11) -- (251.05,153.99) ;
\draw [line width=1.5]    (240.32,133.55) -- (267.92,131) ;
\draw [line width=1.5]    (269.45,133.55) -- (256.67,153.99) ;
\draw [line width=1.5]    (335.87,154.5) -- (317.48,179.03) ;
\draw [line width=1.5]    (352.22,183.11) -- (321.06,182.09) ;
\draw [line width=1.5]    (340.98,155.01) -- (356.31,179.03) ;
\draw  [color={rgb, 255:red, 74; green, 144; blue, 226 }  ,draw opacity=1 ][dash pattern={on 1.69pt off 2.76pt}][line width=1.5]  (298.32,170.9) .. controls (298.32,155.32) and (315.6,142.69) .. (336.91,142.69) .. controls (358.22,142.69) and (375.5,155.32) .. (375.5,170.9) .. controls (375.5,186.48) and (358.22,199.12) .. (336.91,199.12) .. controls (315.6,199.12) and (298.32,186.48) .. (298.32,170.9) -- cycle ;
\draw  [color={rgb, 255:red, 74; green, 144; blue, 226 }  ,draw opacity=1 ][dash pattern={on 1.69pt off 2.76pt}][line width=1.5]  (215.85,141.28) .. controls (215.85,125.7) and (233.13,113.07) .. (254.44,113.07) .. controls (275.75,113.07) and (293.03,125.7) .. (293.03,141.28) .. controls (293.03,156.87) and (275.75,169.5) .. (254.44,169.5) .. controls (233.13,169.5) and (215.85,156.87) .. (215.85,141.28) -- cycle ;
\draw  [color={rgb, 255:red, 74; green, 144; blue, 226 }  ,draw opacity=1 ][dash pattern={on 1.69pt off 2.76pt}][line width=1.5]  (228.63,238.85) .. controls (228.63,223.27) and (245.91,210.64) .. (267.22,210.64) .. controls (288.53,210.64) and (305.81,223.27) .. (305.81,238.85) .. controls (305.81,254.43) and (288.53,267.07) .. (267.22,267.07) .. controls (245.91,267.07) and (228.63,254.43) .. (228.63,238.85) -- cycle ;

\draw (316.4,212.6) node [anchor=north west][inner sep=0.75pt]    {$G_{\xi }$};
\end{tikzpicture}
        \end{subfigure}
        \caption{The base graph $G$ on the left with some critical equipartition $(P,\alpha^c)$. This is the nodal partition of some eigenvector $\psi$ of $L^{\p P}$. By removing one edge, $\psi$ induces a critical equipartition of the modified graph $G_{\xi}$. This is a tree partition, and is hence unique, so $\psi$ must be Courant-sharp as an eigenvector on $G_{\xi}$.}
    \end{figure}
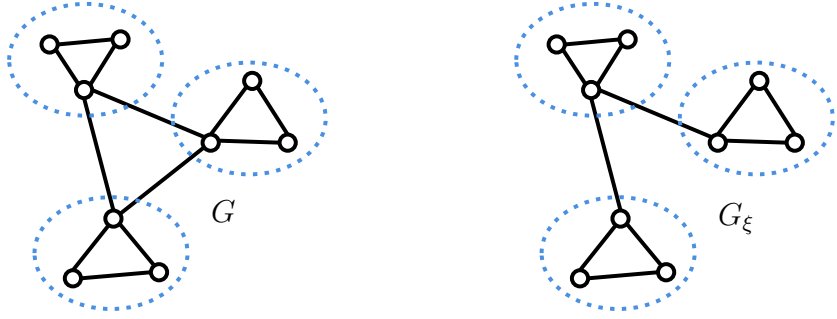

    If we start with $G_{\xi}$ and successively add back the edges of $R$, Corollary \ref{coro:spectral_shift} implies that the total shift $\Delta m$ in the index of the eigenvalue $\Lambda(P,\alpha^c)$ is equal to the number of edges $(i,j) \in R$ along which $\Lambda(P,\alpha^c)$ is a local maximum. Here we are using \eqref{spectral_shift_formula} to obtain 
    \[\Delta m = \sum_{(i,j) \in R}M_{ij}.\]
    Since $\psi(G_{\xi})$ is Courant-sharp, $\Lambda(P,\alpha^c)$ originally has index $\nu$. Hence, the final index $n$ of $\psi$ is given by 
    \[n = \nu + \sum_{(i,j) \in R}M_{ij}.\]
    Alternatively, the nodal deficiency $n - \nu$ of $\psi$ is equal to the number of directions in which $\Lambda(P,\alpha^c)$ is approached as a local maximum, which is exactly the Morse index of the critical point $\text{Mor}(\Lambda(P,\alpha^c)).$ Hence, $\text{Mor}(\Lambda(P,\alpha^c)) = \delta(\psi)$
    as desired.
\end{proof}

\section{Switching Equivalence and the Partition Energy}\label{sec:lower_bound}

Theorem \ref{thm:main_theorem} provides a characterization of local minimal partitions, but it requires us to fix an underlying combinatorial partition $P$. The main result of this section, Theorem \ref{thm:homology_result}, allows us to compare different partitions. As an immediate corollary, we find that Courant-sharp Laplacian eigenvectors induce partitions that are globally minimal.
The methods in this section are inspired by the results of Berkolaiko, Canzani, Cox, and Marzuola in \cite{berkolaikoHomologySpectralMinimal2024}.

Let $G = (V,E,w)$ be a finite, simple, weighted and connected graph. Let $\Gamma \sub E$. Define a signature $\sigma^{\Gamma}$ on $G$ by 
\begin{equation}
\sigma^{\Gamma}_{ij} = \begin{cases}
    -1, & (i,j) \in \Gamma \\
    1, & (i,j) \in E \setminus \Gamma.
\end{cases}
\end{equation}
For ease of notation, we let $L^{\Gamma}$ denote the signed Laplacian on the signed graph $(G,\sigma^{\Gamma})$. We already know from Section \ref{sec:background} that if $\sigma^{\Gamma_1}$ and $\sigma^{\Gamma_2}$ are switching equivalent, then $L^{\Gamma_1}$ and $L^{\Gamma_2}$ are unitarily equivalent via some map $\Phi:\R^{|V|} \to \R^{|V|}$.

\begin{lemma}\label{switching_equiv_crit}
    Let $\Gamma_1,\Gamma_2 \sub E$. Then $\sigma^{\Gamma_1}$ is switching equivalent to $\sigma^{\Gamma_2}$ if and only if $\Gamma_1 \Delta \Gamma_2$ is the boundary set of a bipartite partition.
\end{lemma}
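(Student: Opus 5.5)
The plan is to reduce both directions to a statement about switching functions. Write $\Gamma = \Gamma_1 \Delta \Gamma_2$ and observe that, edge by edge, $\sigma^{\Gamma_1}_{ij}\sigma^{\Gamma_2}_{ij} = \sigma^{\Gamma}_{ij}$. Indeed, the product is $-1$ exactly when the edge lies in one of $\Gamma_1,\Gamma_2$ but not both. Consequently $\sigma^{\Gamma_1}$ and $\sigma^{\Gamma_2}$ are switching equivalent if and only if there is a switching function $\tau$ with
\[\tau_i\tau_j = \sigma^{\Gamma}_{ij} \quad \text{for all } (i,j)\in E.\]
Equivalently, $\sigma^{\Gamma}$ is switching equivalent to the all-positive signature. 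By Lemma \ref{zaslavsky}, this holds if and only if $(G,\sigma^{\Gamma})$ is balanced. The lemma therefore reduces to showing that $\Gamma$ is the boundary set of a bipartite partition if and only if $\sigma^\Gamma$ admits such a $\tau$.

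For the forward direction, suppose $\Gamma = \p P$ for a bipartite partition $P$. A bipartite coloring of $P^G$ assigns $\pm 1$ to each component $G_k$. Let $\tau_i$ be the color of the component containing $i$. Then $\tau$ is constant across edges inside components, which are exactly the edges not in $\Gamma$. It changes sign across every edge of $\p P = \Gamma$, since the two endpoints lie in adjacent vertices of $P^G$ and a bipartite coloring of the multigraph assigns these opposite colors. Hence $\tau_i\tau_j = \sigma^\Gamma_{ij}$, and we are done. Equivalently, this is the observation already made after Definition \ref{def:part_lap}, that $\sigma^{\p P}$ is switching equivalent to $\sigma\equiv 1$ when $P$ is bipartite.

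For the converse, take $\tau$ with $\tau_i\tau_j=\sigma^\Gamma_{ij}$, or use Lemma \ref{harary} to obtain $V=V_1\sqcup V_2$ with $\Gamma$ exactly the set of edges between $V_1$ and $V_2$. Let $P$ be the spanning subgraph with edge set $E\setminus\Gamma$, and let its connected components be $G_k$. The partition condition holds automatically. If $i,j\in V_k$ and $(i,j)\in E$, then $\tau$ is constant on $G_k$ because it can only change across $\Gamma$, so $\tau_i=\tau_j$ and hence $(i,j)\notin\Gamma$. Thus $(i,j)$ is an edge of $G_k$, and the components are induced. The same fact shows that every edge of $\Gamma$ joins two distinct components, so $\p P = \Gamma$ exactly. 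Finally, coloring each component by the constant value of $\tau$ on it gives a proper 2-coloring of $P^G$, because each edge of $P^G$ is an edge of $\Gamma$ across which $\tau$ flips. So $P$ is bipartite.

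The main obstacle is the converse, and specifically confirming that $\Gamma$ coincides with the removed edge set of a genuine partition in the paper's sense: induced components, and no $\Gamma$-edge lying inside a component. This is handled by the constancy of $\tau$ on components. A degenerate case also needs a convention. When $\Gamma=\emptyset$, we take the trivial $1$-partition as bipartite, since its multigraph $P^G$ has a single vertex and no edges. The forward direction requires only the observation that a proper coloring of a multigraph flips along every parallel edge as well.
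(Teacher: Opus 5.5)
Your proof is correct and has the same skeleton as the paper's: the identity $\sigma^{\Gamma_1}\sigma^{\Gamma_2}=\sigma^{\Gamma_1\Delta\Gamma_2}$ reduces the lemma to whether $\sigma^{\Gamma_1\Delta\Gamma_2}$ is switching equivalent to the all-positive signature. The only difference is that you build the switching function directly from a $2$-coloring of $P^G$, and build the partition back from $\tau$, rather than citing Lemmas \ref{harary} and \ref{zaslavsky}. This also makes explicit the converse that the paper dismisses as ``the same argument in the opposite direction,'' including the check that the $\tau$-constant components are induced, so that $\Gamma_1\Delta\Gamma_2$ is exactly $\p P$.
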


\begin{proof}
    First note that $\sigma^{\Gamma_1\Delta\Gamma_2} = \sigma^{\Gamma_1}\sigma^{\Gamma_2}$ since 
    \begin{align}(\sigma^{\Gamma_1}\sigma^{\Gamma_2})_{ij} &= \begin{cases}
        -1, &(i,j) \in \Gamma_1\Delta\Gamma_2 \\
        1, & \text{else}
    \end{cases} \\
    &= \sigma^{\Gamma_1\Delta\Gamma_2}_{ij}.
    \end{align}
    Assume that $\Gamma_1\Delta \Gamma_2$ is the boundary set of a bipartite partition. By Lemma \ref{harary}, we know that $\sigma^{\Gamma_1 \Delta \Gamma_2}$ is balanced, and balanced sets are switching equivalent to an all-positive signature. Hence, there is some $\tau:V \to \{-1,+1\}$ such that $1 = \tau_i\sigma^{\Gamma_1\Delta\Gamma_2}_{ij}\tau_j$ for all $(i,j) \in E$. Expanding gives 
    $1 = \tau_i\sigma^{\Gamma_1\Delta\Gamma_2}_{ij}\tau_j = \tau_i\sigma^{\Gamma_1}_{ij}\sigma^{\Gamma_2}_{ij}\tau_j$
    and multiplying both sides by $\sigma_{ij}^{\Gamma_2}$ gives 
    \[\sigma^{\Gamma_2}_{ij} = \tau_i\sigma_{ij}^{\Gamma_1}\tau_j,\]
    so $\sigma^{\Gamma_1}$ is switching equivalent to $\sigma^{\Gamma_2}$.

    Using the same argument in the opposite direction gives that switching equivalence of $\sigma^{\Gamma_1}$ and $\sigma^{\Gamma_2}$ implies $\Gamma_1 \Delta \Gamma_2$ is the boundary set of a bipartite partition.
\end{proof}

We now show that the nodal partition of any eigenvector of $L^{\Gamma}$ has a boundary $\p P$ that is switching equivalent to $\Gamma$ itself. 
\begin{lemma}\label{lem:nodal_part_switching_equiv}
    Let $\psi$ be a non-degenerate
    eigenvector of $L^{\Gamma}$ with nodal partition $P$. Then $\sigma^{\p P}$ is switching equivalent to $\sigma^{\Gamma}$.
\end{lemma}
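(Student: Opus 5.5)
The plan is to write down an explicit switching function built from the signs of $\psi$, namely $\tau_i = \text{sgn}(\psi_i)$. This is well defined because $\psi$ is non-degenerate, so $\psi_i \neq 0$ for every $i \in V$. The claim to verify is that $\sigma^{\Gamma}$ switched by $\tau$ equals $\sigma^{\p P}$, i.e. $\tau_i\sigma^{\Gamma}_{ij}\tau_j = \sigma^{\p P}_{ij}$ for every edge $(i,j) \in E$. This mirrors the argument at the end of Section \ref{part_lap_subsection}, where $\text{sgn}(\psi)$ switched the all-positive signature to $\sigma^{\p P}$ for an ordinary Laplacian eigenvector.

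The key step is to unpack the definition of the nodal partition. By Definition \ref{def:nodal_set}, the nodal set of $\psi$ on $(G,\sigma^{\Gamma})$ is
\[\mathcal{N} = \{(i,j) \in E : \psi_i\sigma^{\Gamma}_{ij}\psi_j < 0\},\]
and by definition of the nodal partition we have $\p P = \mathcal{N}$. For any edge $(i,j)$,
\[\text{sgn}(\psi_i\sigma^{\Gamma}_{ij}\psi_j) = \tau_i\sigma^{\Gamma}_{ij}\tau_j = (\sigma^{\Gamma})^{\tau}_{ij}.\]
So $(\sigma^{\Gamma})^{\tau}_{ij} = -1$ exactly when $(i,j) \in \mathcal{N} = \p P$, and it equals $+1$ otherwise. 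That is precisely the definition of $\sigma^{\p P}$, which gives $(\sigma^{\Gamma})^{\tau} = \sigma^{\p P}$.

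There is essentially no serious obstacle. The only points needing care are bookkeeping:
\begin{itemize}
\item The nodal partition must be read as the spanning subgraph with boundary exactly $\mathcal{N}$. One should check that removing $\mathcal{N}$ produces a genuine partition, i.e. that no edge inside a connected component of $(V, E\setminus\mathcal{N})$ lies in $\mathcal{N}$. This holds automatically, because an edge of $\mathcal{N}$ whose endpoints lie in the same component would create a negative cycle after switching. The identification $\p P = \mathcal{N}$ is then the paper's own definition.
\item Non-degeneracy is exactly what makes $\tau$ a valid switching function.
\end{itemize}
Alternatively, the conclusion could be routed through Lemma \ref{switching_equiv_crit} by showing that $\Gamma\,\Delta\,\p P$ is the boundary of the bipartite partition given by the sign classes of $\psi$. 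The direct switching computation above is shorter, so I would use that.
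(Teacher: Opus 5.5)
Your core argument is correct, and it is more direct than the paper's. The paper goes through Lemma \ref{switching_equiv_crit}. It checks that $\psi_i\psi_j<0$ exactly on $\p P\,\Delta\,\Gamma$, so $\p P\,\Delta\,\Gamma$ is the boundary of the bipartite partition into sign classes of $\psi$. Harary's and Zaslavsky's balance criteria (Lemmas \ref{harary} and \ref{zaslavsky}) then supply a switching function abstractly. You instead check in one line that $\tau=\operatorname{sgn}\psi$ itself switches $\sigma^{\Gamma}$ to $\sigma^{\mathcal N}=\sigma^{\p P}$, using only non-degeneracy and multiplicativity of the sign.

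Both routes rest on the same fact: the signs of $\psi$ encode the switch. Yours is self-contained and avoids the external characterizations of balance. It also gives you the explicit unitary $D^{\tau}$ with $D^{\tau}\psi=|\psi|>0$. This is exactly the positive eigenvector of $L^{\p P}$ that Proposition \ref{non-degen_evec_induces_equip} feeds into Lemma \ref{lem:evec_induces_equipartition}. The paper's route, in exchange, phrases the result in the symmetric-difference (homology) language used in the rest of Section \ref{sec:lower_bound} and in Appendix \ref{sec:switching_equiv_and_homology}.

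One correction: your first bullet is false, though nothing in your proof depends on it. A negative cycle is no contradiction, because $(G,\sigma^{\Gamma})$ need not be balanced. In fact an edge of $\mathcal N$ can join two vertices of the same component of $(V,E\setminus\mathcal N)$. For instance, take the triangle with $\Gamma=\{(1,2)\}$, $w_{12}=5/3$ and $w_{13}=w_{23}=1$. The vector $\psi=(1,1,3)$ is an eigenvector of $L^{\Gamma}$ with simple eigenvalue $4/3$ (the spectrum is $1,4/3,5$), and its nodal set is $\mathcal N=\{(1,2)\}$. Yet $E\setminus\mathcal N$ is connected, so removing $\mathcal N$ does not give a partition in the paper's sense. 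The lemma's hypothesis and conclusion involve only the edge set $\p P=\mathcal N$, so you should simply drop that bullet rather than assert it holds automatically.
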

\begin{proof}
    By Lemma \ref{switching_equiv_crit} it is sufficient to show that $\p P \Delta \Gamma$ is the boundary set of a bipartite partition. We show this by proving that $\psi$ must change sign across each edge in $\p P \Delta \Gamma$, so the bipartite structure can be induced by the sign of $\psi$.

    Let $(i,j) \in \p P \Delta \Gamma$, so there are two cases to consider. If $(i,j) \in \p P \setminus \Gamma$, $(i,j)$ is in the nodal partition of $\psi$ so $\psi_i \sigma^{\Gamma}_{ij}\psi_j<0$. Since $\sigma_{ij}^{\Gamma} = 1$, we have $\psi_i\psi_j<0$. Similarly, if $(i,j) \in \Gamma \setminus \p P$, we have $\psi_i\sigma_{ij}^{\Gamma}\psi_j >0$. Since $\sigma_{ij}^{\Gamma} = -1$, we must have that $\psi_i\psi_j<0$ as desired.

    These are the only edges on which $\psi$ changes sign. If $(i,j) \in E$ is such that $\psi_i\psi_j<0$, then either $(i,j) \in \p P \setminus \Gamma$ or $(i,j) \in \Gamma \setminus \p P$. Hence, $\p P \Delta \Gamma$ is the boundary set of a bipartite partition.
\end{proof}

\begin{propo}\label{non-degen_evec_induces_equip}
    Let $\Gamma \sub E$ and let $\psi$ be a non-degenerate eigenvector of $L^{\Gamma}$ with corresponding eigenvalue $\lambda_{k}(\Gamma)$. Then the nodal partition $P$ of $\psi$ induces a critical equipartition $(P,\alpha)$ with energy $\lambda_k(\Gamma)$. 
\end{propo}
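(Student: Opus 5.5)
The plan is to move $\psi$ from $L^{\Gamma}$ to the partition Laplacian $L^{\p P}$ by a switching, and then to invoke Lemma \ref{lem:evec_induces_equipartition} together with the converse direction in the proof of Theorem \ref{thm:main_theorem}.

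First I apply Lemma \ref{lem:nodal_part_switching_equiv}. It tells us that $\sigma^{\p P}$ is switching equivalent to $\sigma^{\Gamma}$. For a concrete switching function I take $\tau_i = \text{sgn}(\psi_i)$, which is well defined because $\psi$ is non-degenerate. I then check directly that $\sigma^{\Gamma,\tau} = \sigma^{\p P}$. Indeed, $\tau_i\sigma^{\Gamma}_{ij}\tau_j$ has the sign of $\psi_i\sigma^\Gamma_{ij}\psi_j$, and this is negative exactly on the nodal set $\mathcal{N} = \p P$. By the conjugation identity $L^{\sigma^{\tau}} = D^{\tau}L^{\sigma}D^{\tau}$, the vector $\tau\psi = |\psi|$ is an eigenvector of $L^{\p P}$ with the same eigenvalue $\lambda_k(\Gamma)$. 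The unitary equivalence preserves the spectrum with multiplicity, so this eigenvalue is still simple and still the $k$-th eigenvalue. Moreover $|\psi|$ is strictly positive, so it is a non-degenerate eigenvector of $L^{\p P}$.

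Next I confirm that the nodal partition of $|\psi|$ with respect to $L^{\p P}$ is $P$. This follows from the remark at the end of Section \ref{part_lap_subsection}: a positive eigenvector of $L^{\p P}$ has nodal set exactly $\p P$. Alternatively, I can cite the switching invariance of nodal domains, \cite[Theorem 3.10]{geNodalDomainTheorems2023}.

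I then apply Lemma \ref{lem:evec_induces_equipartition} to $|\psi|$. Setting $\alpha_{ij} = |\psi_j|/|\psi_i| > 0$ for $(i,j) \in \p P$ gives an equipartition $(P,\alpha)$ with energy $\lambda_k(\Gamma)$. For criticality I take $c_k = \sum_{i\in V_k}\psi_i^2$ and $f(G_k,\alpha) = |\psi|\,|_{G_k}/\sqrt{c_k}$, exactly as in the converse part of the proof of Theorem \ref{thm:main_theorem}. Substituting into \eqref{crit_points_must_satisfy} gives $\psi_i^2 - \psi_j^2/\alpha_{ij}^2 = 0$, so the Lagrange condition holds. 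One point needs care: the $c_k$ must be normalized so that $\sum_k c_k = 1$, which I arrange by normalizing $\psi$. This is harmless because \eqref{crit_points_must_satisfy} is homogeneous in $c$.

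The main obstacle is interpreting ``critical'' when $\eta = 0$, where $\mathcal{E}_P$ is not a positive-dimensional manifold. In that case I argue via Theorem \ref{tree_partition} that the equipartition is unique and therefore trivially critical. Otherwise I simply invoke the characterization of Theorem \ref{thm:main_theorem}.
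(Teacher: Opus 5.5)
Your proposal is correct and follows the same route as the paper. You switch $\psi$ to an eigenvector of $L^{\p P}$ via Lemma \ref{lem:nodal_part_switching_equiv}; your explicit choice $\tau = \text{sgn}(\psi)$ is, up to a global sign, the only switching that works. You then apply Lemma \ref{lem:evec_induces_equipartition}. Your explicit check of the Lagrange condition \eqref{crit_points_must_satisfy} and your separate treatment of $\eta = 0$ supply the criticality step that the paper's short proof leaves implicit.
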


\begin{proof}
    Given $\Gamma \sub E$ and an eigenvector $\psi$ of $L^{\Gamma}$ with corresponding eigenvalue $\lambda_k(\Gamma)$, there exists, by Lemma \ref{lem:nodal_part_switching_equiv},  some unitary map $\Phi$ such that $\Phi \psi$ is an eigenvector of $L^{\p P}$. With Lemma \ref{lem:evec_induces_equipartition}, we can construct an equipartition $(P,\alpha)$ with energy $\lambda_k(\Gamma)$, where we choose $\alpha$ based on the values of $\Phi\psi$ across the partition boundary.
\end{proof}

It turns out that requiring that $\p P$ is switching equivalent to $\Gamma$ is more than we need to assume. For $\Gamma \sub E$, let $\mathcal{P}_{\nu}(\Gamma)$ be the set of all $\nu$-partitions $P$ with some subset $\wtil{\Gamma} \sub \p P$ such that $\sigma^{\Gamma}$ is switching equivalent to $\sigma^{\wtil{\Gamma}}$.

\begin{theorem}\label{thm:homology_result}
    Suppose $\Gamma \sub E$. If $\lambda_\nu(\Gamma)$ is the $\nu^{th}$ eigenvalue of $L^{\Gamma}$, then we have 
    \begin{equation}
        \lambda_{\nu}(\Gamma) \leq \inf_{\alpha \in \R^{|\p P|}_{>0}}\Lambda(P,\alpha)
    \end{equation}
     for all $P \in \mathcal{P}_{\nu}(\Gamma)$. Since this is true for any such partition $P$, we obtain
     \begin{equation}
         \lambda_{\nu}(\Gamma) \leq \min_{P \in \mathcal{P}_{\nu}(\Gamma)}\inf_{\alpha \in \R^{|\p P|}_{>0}}\Lambda(P,\alpha).
     \end{equation}
     Furthermore, we have equality $\lambda_{\nu}(\Gamma) = \Lambda(P,\alpha)$ if and only if $P$ is the nodal partition of an eigenvector of $L^{\Gamma}$ with eigenvalue $\lambda_{\nu}(\Gamma)$, in which case $(P,\alpha)$ is a critical equipartition, and the Morse index of $\Lambda$ at $(P,\alpha)$ is the nodal deficiency of $\lambda_{\nu}(\Gamma)$.
\end{theorem}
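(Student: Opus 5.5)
The plan is a min-max argument. We build a $\nu$-dimensional test space for $L^{\Gamma}$ from the ground states of the blocks of $L^{\p P}(P,\alpha)$, and then use switching to move between $L^{\Gamma}$ and $L^{\p P}$-type operators. Fix $P\in\mathcal{P}_\nu(\Gamma)$ with components $\{G_k\}_{k=1}^\nu$, choose $\wtil\Gamma\sub\p P$ with $\sigma^{\wtil\Gamma}$ switching equivalent to $\sigma^{\Gamma}$, and fix any $\alpha\in\R^{|\p P|}_{>0}$. Since $L^{\Gamma}$ and $L^{\wtil\Gamma}$ are unitarily equivalent by $D^\tau$, it suffices to prove $\lambda_\nu(\wtil\Gamma)\le\Lambda(P,\alpha)$.

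\emph{Step 1: a quadratic form comparison.} Consider the operator $L^{\wtil\Gamma}+\sum_{(i,j)\in\p P}w_{ij}B^{\pm}_{ij}$. Here, for edges in $\wtil\Gamma$, we subtract the signed edge term $w_{ij}(u_i+u_j)^2$ and add the potential $w_{ij}(\alpha_{ij}u_i^2+u_j^2/\alpha_{ij})$. For edges in $\p P\setminus\wtil\Gamma$, we do the same with $(u_i-u_j)^2$. In either case this produces exactly the block-diagonal operator $L^{\p P}(P,\alpha)$, because the potentials added on the components do not depend on the sign of the edge. For any $u$ and any edge we have the elementary inequality
\[(u_i\pm u_j)^2\le \alpha u_i^2+\alpha^{-1}u_j^2\qquad(\alpha>0),\]
which follows from $2|u_iu_j|\le\alpha u_i^2+\alpha^{-1}u_j^2$. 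Hence $\innerp{u}{L^{\wtil\Gamma}u}\le\innerp{u}{L^{\p P}(P,\alpha)u}$ for all $u$. This holds with either sign convention, since the sign of the edge only affects the cross term.

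\emph{Step 2: min-max.} Let $f_k=f(G_k,\alpha)$, extended by zero, and set $U=\spa\{f_1,\dots,f_\nu\}$, which has dimension $\nu$. For $u=\sum c_kf_k$, block diagonality gives
\[\innerp{u}{L^{\p P}(P,\alpha)u}=\sum_k c_k^2\lambda_1(G_k,\alpha)\le\Lambda(P,\alpha)\|u\|^2.\]
Combined with Step 1 and Courant–Fischer, this yields $\lambda_\nu(\wtil\Gamma)\le\Lambda(P,\alpha)$. Taking the infimum over $\alpha$ and then over $P$ gives both inequalities. (The minimum over $P$ exists because $\mathcal{P}_\nu(\Gamma)$ is finite.)

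\emph{Step 3: equality.} This is the main obstacle. Suppose $\lambda_\nu(\Gamma)=\Lambda(P,\alpha)$. Then every inequality above is tight on some vector of $U$. First I would show that $(P,\alpha)$ must be an equipartition. If some $\lambda_1(G_k,\alpha)<\Lambda$, the test space spanned by the remaining $f_\ell$ together with a suitable vector is too crude to settle this directly, so I would instead argue through the standard min-max equality case. The maximizing direction $u\in U$, with Rayleigh quotient of $L^{\wtil\Gamma}$ equal to $\lambda_\nu$, must satisfy
\[\innerp{u}{L^{\wtil\Gamma}u}=\innerp{u}{L^{\p P}(P,\alpha)u}=\Lambda\|u\|^2,\]
and a minimax-attaining vector in a $\nu$-dimensional subspace whose top Rayleigh value equals $\lambda_\nu$ is an eigenvector. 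Tightness in the AM-GM step then forces $\alpha_{ij}u_i=\pm u_j$ on each boundary edge, with the sign matching the edge sign, which says exactly that $u\in\ker B_{ij}$. The equality $\innerp{u}{L^{\p P}(P,\alpha)u}=\Lambda\|u\|^2$ forces the coefficients $c_k$ to vanish wherever $\lambda_1(G_k)<\Lambda$. The kernel conditions, combined with the connectivity of $P^G$ and the positivity of the $f_k$, then force all $c_k\ne0$. Hence we have an equipartition, and $u$ is a non-degenerate eigenvector of $L^{\wtil\Gamma}$ whose nodal partition is $P$. Transporting back by $D^\tau$ gives an eigenvector of $L^{\Gamma}$.

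For the converse, if $P$ is the nodal partition of an eigenvector with eigenvalue $\lambda_\nu(\Gamma)$, Proposition \ref{non-degen_evec_induces_equip} gives a critical equipartition with energy $\lambda_\nu(\Gamma)$. By Lemma \ref{lem:nodal_part_switching_equiv} we have $L^{\p P}\cong L^\Gamma$, so the index of the eigenvalue is preserved. Theorem \ref{thm:main_theorem} (or Theorem \ref{tree_partition} if $\eta=0$) then gives that the Morse index equals the nodal deficiency $\nu-\nu(\psi)$, where $\nu(\psi)$ denotes the number of nodal domains of $\psi$. The delicate point I expect is justifying that a minimax-equality vector is a genuine eigenvector. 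I would handle this by noting that $L^{\p P}(P,\alpha)-L^{\wtil\Gamma}\ge0$, so any $u$ attaining equality lies in the kernel of this positive semidefinite difference, and $u$ also lies in the eigenspace of $L^{\p P}(P,\alpha)$. Together these two facts give that $u$ is an eigenvector of $L^{\wtil\Gamma}$.
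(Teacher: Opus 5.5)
Your route is the paper's. You compare $L^{\wtil\Gamma}\le L^{\partial P}(P,\alpha)$ as quadratic forms (the paper's $A+B(\alpha)\ge 0$), use the ground states $f(G_k,\alpha)$ as a $\nu$-dimensional test space, and analyze equality through the kernel of the difference. Applying Courant--Fischer to all of $U$, rather than to the paper's particular combination orthogonal to the first $\nu-1$ eigenvectors, is an equivalent variant.

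\textbf{Step 1 is justified incorrectly.} The inequality $(u_i\pm u_j)^2\le\alpha u_i^2+\alpha^{-1}u_j^2$ is false: with $u_i=u_j=1$ and $\alpha=1$ it reads $4\le 2$. Also, the operator you describe (delete the whole edge term, then add $w_{ij}(\alpha_{ij}u_i^2+u_j^2/\alpha_{ij})$) is not $L^{\partial P}(P,\alpha)$. We have $L^{\partial P}(P,\alpha)=L^{\partial P}+\sum w_{ij}B_{ij}(\alpha_{ij})$, and $B_{ij}$ only cancels the off-diagonal entry. The degree contribution $w_{ij}$ therefore stays on the diagonal, so the boundary potentials are $w_{ij}(1+\alpha_{ij})$ at $i$ and $w_{ij}(1+\alpha_{ij}^{-1})$ at $j$.

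The correct statement is $L^{\partial P}(P,\alpha)-L^{\wtil\Gamma}=\sum_{(i,j)\in\partial P}w_{ij}B^{\pm}_{ij}$. Here $B^{\pm}_{ij}$ has quadratic form $\alpha_{ij}u_i^2\mp 2u_iu_j+\alpha_{ij}^{-1}u_j^2=\alpha_{ij}(u_i\mp u_j/\alpha_{ij})^2\ge 0$, with the upper sign on $\wtil\Gamma$. This nonnegativity is exactly the AM--GM bound $2|u_iu_j|\le\alpha u_i^2+\alpha^{-1}u_j^2$ you quote. The equality conditions $\alpha_{ij}u_i=\pm u_j$ that you use later are unaffected. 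With this correction, Steps 1--2 go through.

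\textbf{Step 3 contains a false general claim.} You assert that a maximizer of the Rayleigh quotient over a $\nu$-dimensional subspace whose top value is $\lambda_\nu$ must be an eigenvector. Counterexample: take $L=\mathrm{diag}(0,1,2)$, $\nu=2$, and $U=\spa\{e_2,e_1+e_3\}$. The Rayleigh quotient is identically $1=\lambda_2$ on $U$, yet $e_1+e_3$ is not an eigenvector.

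Discard that claim. The argument in your final paragraph is the one that works, and it is correct. Take any $u\in U$ whose $L^{\wtil\Gamma}$-Rayleigh quotient equals $\Lambda$. It is annihilated by the positive semidefinite difference. It also satisfies $L^{\partial P}(P,\alpha)u=\Lambda u$, because $c_k=0$ whenever $\lambda_1(G_k,\alpha)<\Lambda$. Hence $L^{\wtil\Gamma}u=\Lambda u$.

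Combined with propagating $c_k\neq 0$ across the connected multigraph $P^G$, this yields the equipartition and the nodal-partition conclusion. It does so more explicitly than the paper, which leaves implicit that all $c_k$ are nonzero.

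One caveat, which you share with the paper: calling $u$ non-degenerate and invoking Theorem \ref{thm:main_theorem} for the Morse index both require $\lambda_\nu(\Gamma)$ to be simple. Neither argument establishes this.
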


In the case that $\lambda_{\nu}(\Gamma)$ corresponds to a Courant-sharp eigenvector of $L^{\Gamma}$ with nodal partition $P$, we obtain a critical equipartition $(P,\alpha^c)$ and find that $\Lambda(P,\alpha^c) \leq \Lambda(\wtil{P},\alpha)$ for all $\wtil{P} \in \mathcal{P}_{\nu}(\Gamma)$ and $\alpha>0$. In other words, a Courant-sharp eigenvector of $L^{\Gamma}$ induces a critical equipartition that is minimal in $\mathcal{P}_{\nu}(\Gamma)$.

In the case that $\psi$ is a Courant-sharp eigenvector of $L$ with eigenvalue $\lambda_{\nu}$, we can view $L$ as a signed Laplacian $L^{\Gamma}$ with $\Gamma = \emp$. The set $\mathcal{P}_{\nu}(\emp)$ then consists of all $\nu$-partitions, since any partition $P$ has $\emp \sub \p P$, which induces the all positive signature. Hence, we have shown the following:
\begin{coro}
    If $\psi$ is a non-degenerate Courant-sharp eigenvector of $L$ with eigenvalue $\lambda_{\nu}$, the nodal partition $P$ of $\psi$ is a globally minimal $\nu$-partition, i.e.
    \[\lambda_{\nu} = \Lambda(P,\alpha^c) \leq \Lambda(\wtil{P},\alpha)\]
    for all $\nu$-partitions $\wtil{P}$ and $\alpha \in \R^{|\p P|}_{>0}$, where $(P,\alpha^c)$ is the critical equipartition associated to $\psi$.
\end{coro}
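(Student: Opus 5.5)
The statement to prove is the final Corollary. It follows directly from Theorem \ref{thm:homology_result} with $\Gamma = \emp$. The plan is to check the hypotheses of that theorem in this special case and then read off the inequality and the equality case.

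First I identify $L$ with the signed Laplacian $L^{\emp}$, since $\sigma^{\emp}\equiv 1$. Next I show that $\mathcal{P}_{\nu}(\emp)$ is the set of all $\nu$-partitions. For any $\nu$-partition $\wtil{P}$, take $\wtil{\Gamma}=\emp\sub\p\wtil{P}$. Then $\sigma^{\wtil{\Gamma}}=\sigma^{\emp}$, which is switching equivalent to itself via $\tau\equiv 1$. Theorem \ref{thm:homology_result} therefore gives
\[
\lambda_\nu \le \inf_{\alpha\in\R^{|\p \wtil P|}_{>0}}\Lambda(\wtil P,\alpha)
\]
for every $\nu$-partition $\wtil P$.

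Now let $\psi$ be the non-degenerate Courant-sharp eigenvector with eigenvalue $\lambda_\nu$. Its nodal partition $P$ has exactly $\nu(\psi)=\nu$ components, so $P$ is a $\nu$-partition. By Proposition \ref{non-degen_evec_induces_equip}, $P$ induces a critical equipartition $(P,\alpha^c)$ with $\Lambda(P,\alpha^c)=\lambda_\nu$. To make this explicit, since $P$ is bipartite we may switch by $\tau=\operatorname{sgn}\psi$. This gives a positive eigenvector $\tau\psi$ of $L^{\p P}$, and Lemma \ref{lem:evec_induces_equipartition} with $\alpha^c_{ij}=(\tau\psi)_j/(\tau\psi)_i>0$ yields energy exactly $\lambda_\nu$. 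Combining this with the inequality above gives $\lambda_\nu=\Lambda(P,\alpha^c)\le\Lambda(\wtil P,\alpha)$ for all $\nu$-partitions $\wtil P$ and all admissible $\alpha$.

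Because the corollary is a direct specialization, there is essentially no obstacle. The only point needing care is that the index of the eigenvalue matches the number of components. This is exactly where Courant-sharpness enters: $\nu(\psi)=\nu$ guarantees that the nodal partition lies in $\mathcal{P}_\nu(\emp)$ and that its energy equals the $\nu$-th eigenvalue. Non-degeneracy is also needed, so that the nodal partition and the ratios $\alpha^c_{ij}$ are well defined.
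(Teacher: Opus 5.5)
Your proposal is correct and follows the paper's own argument. You specialize Theorem \ref{thm:homology_result} to $\Gamma=\emp$, note that $\mathcal{P}_\nu(\emp)$ contains every $\nu$-partition (take $\wtil{\Gamma}=\emp$), and then use Courant-sharpness together with Proposition \ref{non-degen_evec_induces_equip} to obtain the equipartition $(P,\alpha^c)$ with energy $\lambda_\nu$; your explicit switching by $\operatorname{sgn}\psi$ via Lemma \ref{lem:evec_induces_equipartition} is the paper's Section 2.3 construction, and writing $\alpha\in\R^{|\p\wtil P|}_{>0}$ correctly fixes a typo in the statement.
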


\begin{proof}[Proof of Theorem \ref{thm:homology_result}]
Since $\sigma^{\Gamma}$ is switching equivalent to $\sigma^{\wtil{\Gamma}}$, we know that 
$\innerp{u}{L^{\Gamma}v} = \innerp{\Phi u}{L^{\wtil{\Gamma}}\Phi v}$
for some unitary map $\Phi$.
Let $g(G_k,\alpha)$ be the ground state eigenvector on $G_k$, normalized and chosen such that $g_i(G_k,\alpha)>0$ for all $i \in V_k$. Define 
$\wtil{g}(G_k,\alpha) = \Phi g(G_k,\alpha)$
and choose $c_1,\dots,c_\nu$ such that 
\[\psi = \sum_{k=1}^{\nu}c_k\wtil{g}(G_k,\alpha)\]
is orthogonal to the first $\nu-1$ eigenvectors of $L^{\Gamma}$. We know this can be done because the $\wtil{g}(G_k,\alpha)$ are linearly independent.

By the Rayleigh characterization of eigenvalues, we have 
\begin{equation}\label{rayleigh_quotient}
\lambda_{\nu}(\Gamma) \leq \frac{\innerp{u}{L^{\Gamma}v}}{\innerp{\psi}{\psi}} = \frac{\innerp{\Phi u}{L^{\wtil{\Gamma}}\Phi v}}{c_1^2 + \cdots + c_{\nu}^2}.
\end{equation}
We wish to write $L^{\wtil{\Gamma}}$ in terms of $L^{\p P}(P,\alpha)$ in order to use the fact that $\psi$ is simply a linear combination of orthogonal ground state functions.

First note that 
\begin{equation}\label{pert_by_a}
L^{\wtil{\Gamma}} = L^{\p P} - A,
\end{equation}
where 
\[A_{ij} = \begin{cases}
    2w_{ij}, & (i,j) \in \p P\setminus \wtil{\Gamma}, \\
    0, & \text{else}.
\end{cases}\]
Perturbation of $L^{\p P}$ by $-A$ reverses the signature on all edges in $\p P\setminus \wtil{\Gamma}$, and hence results in $L^{\wtil{\Gamma}}$.
We also have
\begin{equation}\label{pert_by_b}
    L^{\p P}(P,\alpha) = L^{\p P} + B(\alpha)
\end{equation}
where $B = \sum_{(i,j) \in \p P}w_{ij}B_{ij}(\alpha_{ij})$ is the sum of all perturbations in the construction of the parameter dependent partition as in \eqref{parameter_perturbation}.
Combining \eqref{pert_by_a} and \eqref{pert_by_b} gives
$L^{\wtil{\Gamma}} = L^{\p P}(P,\alpha) - A - B(\alpha)$.

We now show that $A + B(\alpha)$ is positive semidefinite. From their definitions, we see that $A + B(\alpha)$ is a symmetric matrix given by
\[(A + B(\alpha))_{ij} = \begin{cases}
    p_i(\alpha), & i=j, \text{ and }i \text{ incident to }\p P \\
    -w_{ij}, & (i,j) \in \wtil{\Gamma} \\
    w_{ij}, & (i,j) \in \p P \setminus \wtil{\Gamma},
\end{cases}\]
where
\[p_i(\alpha) = \sum_{\substack{j>i \\ (i,j) \in \p P}}w_{ij}\alpha_{ij} + \sum_{\substack{j<i \\ (i,j) \in \p P}}\frac{w_{ij}}{\alpha_{ij}}\]
are the diagonal entries consisting of all the potentials. For sake of being explicit we  again taken the convention that when $i<j$ we add $\alpha_{ij}$ to vertex $i$ and $1/\alpha_{ij}$ to vertex $j$. It follows that 
\[\innerp{u}{(A + B(\alpha))u} = \sum_{i \in \p V}p_i(\alpha)u_i^2 + \sum_{(i,j) \in \p P\setminus \wtil{\Gamma}}2w_{ij}u_iu_j - \sum_{(i,j) \in \wtil{\Gamma}}2w_{ij}u_iu_j.\]
Regrouping the terms based on whether an edge lies in $\wtil{\Gamma}$ gives
\begin{align}
    \innerp{u}{(A + B(\alpha))u} = \sum_{(i,j) \in \p P\setminus \wtil{\Gamma}}w_{ij}\alpha_{ij}(u_i + \tfrac{1}{\alpha_{ij}}u_j)^2 + \sum_{(i,j) \in \wtil{\Gamma}}w_{ij}\alpha_{ij}(u_i - \tfrac{1}{\alpha_{ij}}u_j)^2 \geq 0.
\end{align}
Returning to \eqref{rayleigh_quotient}, we compute
\begin{align}
    \lambda_{\nu}(\Gamma) &\leq  \frac{\innerp{\Phi \psi}{L^{\p P}(P,\alpha)\Phi \psi} - \innerp{\Phi \psi}{(A + B(\alpha))\Phi \psi}}{c_1^2 + \cdots + c_{\nu}^2} \\
    &\leq \frac{\innerp{\Phi \psi}{L^{\p P}(P,\alpha)\Phi \psi}}{c_1^2 + \cdots + c_{\nu}^2} \\
    &= \frac{c_1^2\lambda_1(G_1,\alpha) + \cdots + c_{\nu}^2\lambda_1(G_{\nu},\alpha)}{c_1^2 + \cdots + c_{\nu}^2} \\
    &\leq \max_{k=1,\dots,\nu}\lambda_1(G_k,\alpha) \\
    &= \Lambda(P,\alpha)
\end{align}
as desired.

If we have $\lambda_{\nu}(\Gamma) = \Lambda(P,\alpha)$, then Rayleigh's Theorem \cite[Theorem 4.2.2]{hornMatrixAnalysis2012} implies $\psi$ is an eigenvector of $L^{\Gamma}$ with eigenvalue $\lambda_{\nu}(\Gamma)$ and hence that $\Phi \psi$ is an eigenvector of $L^{\wtil{\Gamma}}$ with the same eigenvalue. Furthermore, from the fact that $(A + B(\alpha))\Phi \psi = 0$ we find that 
\[(\Phi\psi)_i + \frac{1}{\alpha_{ij}}(\Phi \psi)_j = 0\]
for all $(i,j) \in \p P \setminus \wtil{\Gamma}$ and 
\[(\Phi\psi)_i - \frac{1}{\alpha_{ij}}(\Phi \psi)_j = 0\]
for all $(i,j) \in \wtil{\Gamma}$. Since each $\alpha_{ij}>0$, this means that $\Phi\psi$ must change sign across each $(i,j) \in \p P\setminus \wtil{\Gamma}$ and not change sign across each $(i,j) \in \wtil{\Gamma}$. Hence, $P$ is the nodal partition of $\Phi \psi$ as an eigenvector of $L^{\wtil{\Gamma}}$ and $P$ is the nodal partition of $\psi$ as an eigenvector of $L^{\Gamma}$. Also, $(P,\alpha)$ is a critical equipartition by Proposition \ref{non-degen_evec_induces_equip}.
\end{proof}

\begin{rem}
    Since $\sigma^{\Gamma}$ is trivially switching equivalent to itself, we obtain the nice result that given some $\nu$-partition $P$,
    \[\lambda_{\nu}(\Gamma) \leq \inf_{\alpha>0}\Lambda(P,\alpha)\]
    for any $\Gamma \sub \p P$. Hence, another lower bound on the energy for a fixed partition $P$ is given by 
    \begin{equation}\max_{\Gamma \sub \p P}\lambda_{\nu}(\Gamma) \leq \inf_{\alpha>0}\Lambda(P,\alpha).
    \end{equation}
\end{rem}

\begin{rem}
    Theorem \ref{thm:homology_result} can be thought of as a discrete analogue of \cite[Theorem 1.6]{berkolaikoHomologySpectralMinimal2024}. This result uses homology to compare different piecewise $C^1$ cuts of a bounded domain. All of our results with switching equivalence could be equivalently phrased in the language of homology in complete parallel with the results in \cite{berkolaikoHomologySpectralMinimal2024}. For a more explicit discussion of this connection we refer the reader to Appendix \ref{sec:switching_equiv_and_homology}.
\end{rem}

\appendix
\section{Connection to the partition Laplacian on surfaces}
\label{app:connection_to_cts}
Defining $L^{\p P}$ as a signed Laplacian is nice because we can use machinery from signed graph theory, but we lose any intuition on how this $L^{\p P}$ is related to the continuum $-\lap^{\p P}$ as mentioned in the introduction. It turns out that we can view $L^{\p P}$ as a discretization of $-\lap^{\p P}$. We sketch the main ideas here.

\subsection{The continuum partition Laplacian}
Let $\Omega \sub \R^2$ be a bounded domain with smooth boundary $\p \Omega$. A $\nu$-partition $P$ of $\Omega$ is a collection of $\nu$ disjoint open connected subsets $\{\Omega_k\}_{k=1}^{\nu}$ of $\Omega$, each with piecewise smooth boundary, such that 
\[\overline{\Omega} = \overline{\bigcup_{k}\Omega_{k}}.\]
We let $\p P$ denote the boundaries of the subdomains that do not coincide with $\p \Omega$, i.e. 
\[\p P = \bigcup_{k=1}^{\nu}\p \Omega_k\setminus \p \Omega.\]
Given an eigenfunction $\psi$ of $-\lap$ with eigenvalue $\lambda_k$ satisfying Dirichlet boundary conditions on $\p \Omega$, we can obtain a bipartite partition $P$ by taking each $\Omega_k$ to be a nodal domain of $\psi$. Roughly speaking, the partition Laplacian is designed to have eigenfunctions with nodal partitions that are not necessarily bipartite, so it is naturally useful when studying non-bipartite minimal partitions.

For a function $u:\Omega \to \R$, we let $u_i = u|_{\Omega_i}$, and we say that $u$ is anticontinuous across $\p P$ if 
\[u_i\big|_{\p \Omega_i \cap \p \Omega_j} = - u_j\big|_{\p \Omega_i \cap \p \Omega_j},\hspace{1cm} \pderiv{u_i}{\nu_i}\big|_{\p \Omega_i \cap \p \Omega_j} = \pderiv{u_j}{\nu_j}\big|_{\p \Omega_i \cap \p \Omega_j}\]
whenever $\p \Omega_i \cap \p \Omega_j \neq \emp$. Then the partition Laplacian $-\lap^{\p P}$ is generated by the bilinear form
\[\innerp{u}{-\lap^{\p P}v} = \sum_{k=1}^{\nu}\int_{\Omega_k}\grad u_k \cdot \grad v_{k} dV\]
and it acts on functions $u$ such that $u_i \in H^1(\Omega_i)$ for each $i$, vanish on $\p \Omega$, and are anticontinuous across $\p P$. If our partition $P$ is bipartite, it follows that $-\lap^{\p P}$ is unitarily equivalent to to $-\lap$. The unitary map uses the bipartite structure and multiplies by $-1$ on certain domains and $1$ on others. See \cite{berkolaikoHomologySpectralMinimal2024} for a more detailed explanation for the partition Laplacian.

\subsection{Anticontinuous Functions on Graphs} Let $G$ be a finite, simple, and connected graph, and let $P = \{G_k\}_{k=1}^{\nu}$ be a $\nu$-partition of $G$. To make sense of an anticontinuous boundary condition along $\p P$, it is helpful to introduce ghost points along the partition boundary.

For each $(i,j) \in \p P$, add two vertices $k_{ij},\ell_{ij}$ along with edges $(i,k_{ij})$ and $(j,\ell_{ij})$. Then remove the original edge $(i,j)$ and set the edge weight on each of the two new edges to be $2w_{ij}$. After following this procedure for each $(i,j) \in \p P$, call the resulting graph $G' = (V',E',w')$.

We say that $u:V' \to \R$ is anticontinuous across $\p P$ if 
\begin{equation}\label{anticontinuity}
    u_{k_{ij}} = -u_{\ell_{ij}}, \hspace{1cm} u_{k_{ij}} - u_i = u_{\ell_{ij}} - u_{j}
\end{equation}
for all $(i,j) \in \p P$.
The first condition is the analogue for $u_i = -u_j$ and the second condition is an analogue for the condition on the normal derivatives. If $L(G')$ is the usual graph Laplacian on $G'$, we define the partition Laplacian $L^{\p P}(G')$ as follows:
\begin{equation}
    (L^{\p P}(G')u)_{i'} =\begin{cases}
            (L(G')u)_{i'}, & i' \in V \\
            u_{k_{ij}} + u_{\ell_{ij}}, & i' = k_{ij} \text{ for some }(i,j) \in \p P\\
            u_{k_{ij}} - u_{i} + u_{j} - u_{\ell_{ij}}, & i' = \ell_{ij} \text{ for some }(i,j) \in \p P.
    \end{cases}
\end{equation}
In other words, we encode the boundary conditions in the rows of $L^{\p P}(G')$ corresponding to the vertices in $V' \setminus V$. The eigenvectors of $L^{\p P}(G')$ that we care about must be anticontinuous, so the entries of $L^{\p P}(G')u$ corresponding to vertices in $V' \setminus V$ will be zero when $u$ is anticontinuous.

Let $B$ be the $|V'| \times |V'|$ diagonal matrix where $B_{ii} = 1$ if $i \in V$ and $B = 0$ otherwise. Then we need to look for the generalized eigenvectors of $L^{\p P}(G')$ with respect to $B$, i.e. we need to find $u$ and $\lambda$ such that 
$L^{\p P}(G')u = \lambda B u$.

This construction of the partition Laplacian works perfectly fine and one can show that given a bipartite partition $P$, the eigenvalues of $L^{\p P}(G')$ with respect to $B$ and the eigenvalues of $L(G)$ are the same, so there is a notion of unitary equivalence between them.

\subsection{Construction of the Partition Laplacian}
It is much nicer if we use a slightly different framework. Given any function $u:V \to \R$ we can extend it uniquely to an anticontinuous function defined on $V'$. We say that $u': V' \to \R$ is the anticontinuous extension of $u$ if 
\[u'_{k_{ij}} = \frac{u_i-u_j}{2}, \hspace{1cm}u'_{\ell_{ij}} = \frac{u_j - u_i}{2}\]
for all $(i,j) \in \p P$, and $u'=u$ everywhere else. This comes from explicitly solving the system \eqref{anticontinuity}. Let $T:\R^{|V|} \to \R^{|V'|}$ be the linear map that takes $u \mapsto u'$. Then if we consider the composition $L^{\p P}(G')T:\R^{|V|} \to \R^{|V'|}$ we find that any rows corresponding to vertices in $V'\setminus V$ must be zero. We can then project $R:\R^{|V'|} \to \R^{|V|}$ by removing the bottom $|V'| - |V|$ rows from $L^{\p P}(G')T$. The result is an operator $RL^{\p P}(G')T:\R^{|V|} \to \R^{|V|}$. 

An explicit but tedious calculation shows that $RL^{\p P}(G')T$ is a symmetric $|V| \times |V|$ matrix that is equal to $L$ at every entry except for those corresponding to $(i,j) \in \p P$, and in those entries the sign is flipped. This is exactly the partition Laplacian as defined in \eqref{def:part_lap}. 

\section{Switching Equivalence and Homology}\label{sec:switching_equiv_and_homology}

The space $C_0(G,\Z_2)$ of 0-chains on $G$ consists of formal linear combinations of vertices of $G$ with and the space $C_1(G,\Z_2)$ of 1-chains consists of formal linear combinations of edges of $G$, both with coefficients taken in $\Z_2$. Since all addition is taken mod 2, note that for any $\Gamma_1,\Gamma_2 \in C_1(G,\Z_2)$ addition is simply the symmetric difference $\Gamma_1+\Gamma_2 = \Gamma_1 \Delta \Gamma_1$.

We define the boundary mapping $\p:C_1(G,\Z_2) \to C_0(G,\Z_2)$ by $\p(i,j) = (i) + (j)$, extended by linearity. The first homology $H_1(G,\Z_2)$ is thus
\[H_1(G,\Z_2) = \ker(\p),\]
since there are no 2-chains on a graph. We note that $\ker(\p)$ is generated by cycles on $G$ so $\dim \ker (\p) = \beta$, the first Betti number of $G$. For $\Gamma \in C_1(G,\Z_2)$ we denote its equivalence class in $H_1(G,\Z_2)$ by $[\Gamma]$. We say that $\Gamma_1$ and $\Gamma_2$ in $C_1(G,\Z_2)$ are \textit{homologous} if and only if $[\Gamma_1+\Gamma_2] = 0 \in H_1(G,\Z_2)$. 

Define a positive semidefinite bilinear form $t$ on $C_1(G,\Z_2)$ by $t(e_i,e_j) = \delta_{ij}$, extended by linearity. We consider the orthogonal complement of $\ker(\p)$ with respect to $t$:
\[\ker(\p)^{\perp} = \{\Gamma \in C_1(G,\Z_2): t(\Gamma,\wtil{\Gamma}) = 0 \text{ for all }\wtil{\Gamma} \in \ker(\p)\}.\]
An explicit computation shows that $\ker(\p)^{\perp}$ is generated by edges that form cuts of $G$. We say that $\Gamma \sub E$ forms a cut of $G$ if there exist two disjoint $S,T \sub V$ with $S \cup T = V$ such that each $e \in \Gamma$ connects one vertex in $S$ to one vertex in $T$. We label the cut space $\mathcal{C}^{\ast}$.

With this estabilished we have the orthogonal decomposition 
\[C_1(G,\Z_2) = H_1(G,\Z_2) \oplus \mathcal{C}^{\ast}.\]
Hence, $\Gamma_1$ and $\Gamma_2$ are homologous if and only if $\Gamma_1 + \Gamma_2 \in \mathcal{C}^{\ast}$. Since every cut forms a bipartite partition of $G$ and addition is equivalent to symmetric difference, we find that $\Gamma_1$ and $\Gamma_2$ are homologous if and only if $\Gamma_1 \Delta \Gamma_2$ is the boundary set of a bipartite partition. This in turn is equivalent to saying that $\sigma^{\Gamma_1}$ is switching equivalent to $\sigma^{\Gamma_2}$ by Lemma \ref{switching_equiv_crit}, so $\sigma^{\Gamma_1}$ and $\sigma^{\Gamma_2}$ are switching equivalent if and only if $\Gamma_1$ and $\Gamma_2$ are homologous.

\printbibliography

\end{document}